\newtheorem{satz}{Theorem}[section]
\newtheorem{lemma}[satz]{Lemma}
\theoremstyle{definition}
\newtheorem{remark}[satz]{Remark}
\newtheorem{as}{Assumption}
\newcommand{\R}{\mathbb{R}}
\newcommand{\N}{\mathbb{N}}
\newcommand{\Lin}{\mathcal{L}}
\newcommand{\V}{\mathcal{V}}
\newcommand{\Hi}{\mathcal{H}}
\newcommand{\Hs}{\mathcal{HS}}
\newcommand{\Pc}{\mathcal{P}}
\newcommand{\Qc}{\mathcal{Q}}
\newcommand{\Ac}{\mathfrak{A}}
\newcommand{\Rc}{\mathcal{R}}
\newcommand{\Gc}{\mathcal{G}}
\newcommand{\Sc}{\mathcal{S}}
\newcommand{\Uc}{\mathcal{U}}
\newcommand{\Bc}{\mathcal{B}}
\newcommand{\A}{\mathcal{A}}
\newcommand{\Pctil}{\widehat{\mathcal{P}}}
\newcommand{\dual}[3][]{\! \left \langle#2,#3 \right \rangle_{#1} \!}
\newcommand{\dualb}[3][]{\big\langle #2,#3 \big\rangle_{#1}}
\newcommand{\seq}[1]{(#1_n)_{n\in \N}}
\newcommand{\seqd}[2]{(#1_{#2})_{#2 \in \N}}
\newcommand{\ska}[3][]{\left ( #2 , #3 \right )_{#1}}
\newcommand{\W}{\mathcal{W}_1(0,T)}
\newcommand{\incl}{\hookrightarrow}
\newcommand{\weak}{\rightharpoonup}
\newcommand{\weaks}{\stackrel{\ast}{\rightharpoonup}}
\newcommand{\with}{: \ }
\DeclareMathOperator{\tr}{tr}
\DeclareMathOperator{\dom}{dom}
\DeclareMathOperator*{\esssup}{ess\,sup}
\newcommand{\red}[1]{}
\numberwithin{equation}{section}
\title{\Large Convergence of the backward Euler scheme for the operator-valued
Riccati differential equation\\ with semi-definite data\footnote{The authors gratefully acknowledge financial support from the Deutsche Forschungsgemeinschaft through the
Collaborative Research Center 901 "Control of self-organizing nonlinear systems: Theoretical methods and
concepts of application" (projects A2, A8).}}
\author{\large
	Monika Eisenmann $\bullet$ Etienne Emmrich $\bullet$ Volker Mehrmann\\
\normalsize
Technische Universit\"{a}t Berlin, Institut f\"{u}r Mathematik\\[-0.5ex]
\normalsize
Stra\ss e des 17.\ Juni 136, 10623 Berlin, Germany\\[-0.5ex]
\normalsize
	\{meisenma, emmrich, mehrmann\}@math.tu-berlin.de
}
\date{\normalsize\today}
\begin{document}

	\maketitle

\begin{abstract}
  For initial value problems associated with operator-valued Riccati
  differential equations posed in the space of Hilbert--Schmidt operators
  existence of solutions is studied.
  An existence result known for algebraic Riccati equations is generalized
  and used to obtain the existence of a solution to the approximation of the
  problem via a  backward Euler scheme.
  Weak and strong convergence of the sequence of approximate solutions is established
  permitting a large class of right-hand sides and initial data.
\end{abstract}

\section{Motivation}

In this paper, we prove the existence of a solution to the initial value
problem for the operator valued Riccati differential equation
\begin{equation} \label{eq0:InitialRic}
  \begin{split}
    \Pc'(t) +  \A^*(t)\Pc(t)+ \Pc(t)\A(t) + \Pc^2(t) &= \Qc(t),
    \quad t\in
    (0,T),\\
    \Pc(0)&= \Pc_0,
  \end{split}
\end{equation}
by showing the convergence of a suitable approximation scheme. \red{Here we are particularly interested in the maximal solution.}
A solution of this problem is of
importance, for example, in the optimal control of partial differential
equations, see \cite{J.L.Lions.1971}. Even though the existence of a
solution can be deduced from the optimality conditions of a suitable control
problem, it is helpful for applications to construct a solution to the initial
value problem directly. This solution can then be used to calculate an
optimal solution of the control problem.

In the literature, different ways of examining the solvability
of the initial value problem~\eqref{eq0:InitialRic} have been studied. In
\cite{R.F.CurtainundA.J.Pritchard.1978}, an
approach using a two-parameter semigroup is proposed. This ansatz is
considered and generalized in many works, see,
e.g.,\,\cite{G.DaPratoundA.Ichikawa.1986,
G.DaPratoI.LasieckaundR.Triggiani.1985,
  F.Flandoli.1984, F.Flandoli.1993,
	I.LasieckaundR.Triggiani.1983}. A similar approach was used in
\cite{G.DaPrato.1973} based on Green functions. In \cite{L.Tartar.1974},
existence was shown by adding a holomorphic function to the linear part of
the differential equation.
In \cite{I.LasieckaundR.Triggiani.2000,I.LasieckaundR.Triggiani.2000b} many
further aspects to the approach via mild solution theory to this problem may
be found.

For matrix-valued Riccati equations, the backward Euler scheme and BDF
methods have been studied e.g. in
\cite{AscMR95,P.BennerandH.Mena.2004,
P.BennerandH.Mena.2012}.
Many further aspects related to matrix-valued Riccati equations can be
found in \cite{W.Reid.1972}. In \cite{P.BennerandH.Mena.2012}, a spatial
discretization is used to obtain results for the operator case, as well.
In \cite{E.HansenundT.Stillfjord.2014}, the convergence analysis of an
operator splitting method is considered.

\red{All these works consider solely pointwise positive definite right-hand
sides $\Qc$ and positive definite initial values $\Pc_{0}$. In the following, we
{\color{red} generalize these to a certain class of problems with
 semidefinite data.
To achieve these results we have to give up
some generality in other aspects.} In particular, results that are based on a
semigroup approach {\color{red} and deal with an underlying control
problem often  allow}
for data which do not have to be within the space of Hilbert--Schmidt
operators and therefore do not have to be compact. So far we
cannot include more general nonlinearities of the form $\Pc(t) B B^*
\Pc(t)$, $t\in [0,T]$, which arise in control problems for a suitable linear
operator $B$.  In our approach, we need to make use of the fact that
$(\Pc^2(t),\Pc(t))$ has a suitable
bound from below, where $(\cdot,\cdot)$ is the inner product within the
space of Hilbert--Schmidt operators. When including a nontrivial $B$, this
bound is no longer obtained.
}

Similarly to our approach, in \cite{R.Temam.1969, R.Temam.1971} a
numerical method to construct a
solution of~\eqref{eq0:InitialRic} is
studied. Here, the existence of a weak solution is proven via a time
discretization using a three-step splitting method. In \cite[Chapter
III.2.3 Example 3]{V.Barbu.1976}, the same result is proven through the solvability of the algebraic Riccati equation
\begin{equation*}
\tilde{A}^*P+ P \tilde{A} + P^2 = Q
\end{equation*}
for suitable linear operators $\tilde{A}$ and $Q$ and the
existence of a mild solution to the initial value problem for differential
equations with accretive operators.

In this paper  we achieve both, an existence result and a numerical
approximation method, via a discretization in time of the
initial value problem \eqref{eq0:InitialRic} using the backward Euler scheme
with constant step size $\tau = \frac{T}{N}$, $N\in \N$. This leads to the
discretized system
\begin{equation*}
\frac{P_n-P_{n-1}}{\tau} + A^*_n P_{n} + P_{n} A_n + P_{n}^2
= Q_n,\quad n=1, 2, \dots, N,
\end{equation*}
with $P_0 = \Pc_0$ and suitable operators $A^*_n$, $A_n$, and
$Q_n$.
Here $P_n$ denotes an approximation of $\Pc(t_n)$ with $t_n = n \tau$ ($n
= 1,2,\dots,N$).
Rewriting this algebraic operator equation leads to a system of algebraic
Riccati equations of the form
\begin{equation*}
\left(A_n + \frac{1}{2\tau}I  \right)^* P_{n}
+ P_{n} \left(A_n + \frac{1}{2\tau}I \right) + P_{n}^2
= Q_n + \frac{1}{\tau} P_{n-1},\quad n=1, 2, \dots, N,
\end{equation*}
with $P_0 = \Pc_0$, where in contrast to the approach in
\cite{V.Barbu.1976} the right-hand side
is more complicated. To deal with this difficulty is a major component of
our work which is assembled as follows.

In Section~\ref{section:Notation}, we
begin with a short introduction to the concept of Hilbert--Schmidt
operators
and the function spaces required for the weak solution approach.
In the next section, a generalization of the existence result for an
algebraic Riccati equation from \cite[Chapter II.3, Theorem~3.9]{V.Barbu.1976}
is considered. Here, we take the right-hand side from a class of operators
that are bounded from below by a constant depending on the operator $\A
= \A(t)$.
The class of possible right-hand sides includes operators which are not
necessarily positive definite. In Section~\ref{section:varSol}, our main result
Theorem~\ref{satz4:variationelLoesung} is presented. We prove the
existence of a solution in the weak sense, i.e., the existence of a locally integrable function
$\Pc =  \Pc(t)$ taking values in a suitable space of linear operators and fulfilling the initial condition $\Pc(0) = P_0$ in an appropriate sense such that
\begin{align*}
- \int_{0}^{T} \dual{\Pc(t)}{R} \varphi'(t) dt +
\int_{0}^{T} \dual{ \A^*(t)\Pc(t)+ \Pc(t)\A(t) +
	\Pc^2(t) }{R}
\varphi(t) dt
\\
 = \int_{0}^{T} \dual{\Qc(t)}{R} \varphi(t) dt
\end{align*}
is fulfilled for every $R$ from a suitable space of linear operators and
every smooth test function $\varphi : (0,T) \to \R$ with compact
support.
Here, $\dual{\cdot}{\cdot}$ is a suitable
duality pairing that we will introduce in Section
\ref{section:Notation} in more detail.
To this extent, we construct a solution of the initial value problem for the
Riccati differential equation using the backward Euler scheme. The
time-discretized equations that occur can be solved using the new existence
result for algebraic Riccati equations from Section~\ref{section:algRic} which
allows us to consider a larger class of functions for the
right-hand side and the initial value in comparison with
\cite{R.Temam.1971}. We can even allow indefiniteness for the data as well
as a more general condition on the
integrability of the right-hand side which  are the same as proposed in
\cite[Section~19--20]{L.Tartar.2006}.

Even though a variational approach is restricted to data that are
Hilbert--Schmidt operators, which are compact operators, we believe that this
concept of solution is more suitable for numerical examinations. For
example, it offers both a constructive scheme and the possibility to fully
discretize the problem.

\section{Notation and preliminaries} \label{section:Notation}

In order to state the weak formulation of the problem, a Hilbert space
setting within the space of linear operators is required. To proceed like this,
we briefly introduce the space of Hilbert--Schmidt operators. A complete
introduction can be found in \cite{N.DunfordundJ.Schwartz.1963}.

For real, separable Hilbert spaces $(X, \ska[X]{\cdot}{\cdot}, \|\cdot\|_X)$
and $(Y, \ska[Y]{\cdot}{\cdot}, \|\cdot\|_Y)$, let $\Lin(X,Y)$ be the space of
linear bounded operators mapping $X$ into $Y$. We denote the
\emph{Hilbert--Schmidt
norm} of a compact operator $T \in \Lin(X,Y )$ by
\begin{align*}
\|T\|_{\Hs(X,Y)} = \left( \sum_{n=1}^{\infty}
\|Te_n\|^2_{Y}\right)^{\frac{1}{2}},
\end{align*}
where $\seq{e}$ is an arbitrary orthonormal basis of $X$. This norm is
induced by the inner product
\begin{align*}
\ska[\Hs(X,Y)]{T}{S}=\sum_{n=1}^{\infty} \ska[Y]{Te_n}{Se_n} = \tr(T^*S) =
\tr(ST^*), \quad S, T \in \Lin(X,Y ),
\end{align*}
where the operator $T^*$ denotes the Hilbert space adjoint of $T$ and
$\tr$ is the trace of an operator.
Equipped with this inner product and norm, the linear space
\begin{align*}
\Hs(X,Y) = \left\{ T\in \Lin(X,Y) : \| T\|_{\Hs(X,Y)} < \infty \right\}
\end{align*}
of Hilbert--Schmidt operators is a separable Hilbert space. Note that both
the norm $\|\cdot\|_{\Hs(X,Y)}$ and the inner product
$\ska[\Hs(X,Y)]{\cdot}{\cdot}$ are independent of the choice of basis
$(e_n)_{n\in \N}$ as shown in \cite[Theorem~3.6.1]{B.Simon.2015}.

Using this closed subspace of the space of linear operators, we can
introduce a suitable \emph{Gelfand triple} for the following theory.
This concept to construct suitable spaces can also be found in
\cite{I.G.Rosen.1991} and \cite{R.Temam.1971}.

We begin by determining the exact assumptions on the spaces on which
the operators are defined.

\begin{as} \label{as:spaces}
	Let $(V, \ska[V]{\cdot}{\cdot}, \|\cdot\|_V)$ and $(H,\ska[H]{\cdot}{\cdot},
	\|\cdot\|_H)$ be real, separable Hilbert spaces such that $V$ is compactly
	and densely embedded (denoted as $\stackrel{c,d}{\incl}$ ) into $H$.

	The embedding constant is denoted by 	$C_{V,H}$, i.e.,
	\begin{equation} \label{eq2:normHV}
	\|v\|_{H} \leq C_{V,H} \|v \|_{V}
	\end{equation}
	holds for every $v \in V$.
\end{as}

Identifying $(H, \ska[H]{\cdot}{\cdot}, \|\cdot\|_H)$ with its dual $(H^*,
\ska[H^*]{\cdot}{\cdot}, \|\cdot\|_{H^*})$, we have the Gelfand triple
\begin{equation} \label{eq2:DefVH}
V \stackrel{c,d}{\incl} H \cong H^*\stackrel{c,d}{\incl} V^*,
\end{equation}
where $(V^*, \ska[V^*]{\cdot}{\cdot}, \|\cdot \|_{V^*})$ denotes the dual
space of $V$.
The inner product in $H$ can be extended to the duality pairing between
$V^*$ and $V$, which is denoted by $\dual[V^*\times V]{\cdot }{\cdot}$.
We introduce the Hilbert spaces $\V$, $\Hi$ and $\V^*$ as
\begin{equation} \label{eq2:DefVHi}
\V = \Hs(V^*, H) \cap \Hs(H, V),  \quad \Hi  = \Hs(H,H) \ \text{ and }
\
\V^* = \Hs(V, H) + \Hs(H, V^*)
\end{equation}
and identify $\Hi$ with its dual $\Hi^*$.
Note that it is possible to define the above intersection of vector
spaces, since every appearing space can be interpreted as a subset of
$\Lin(V,V^*)$. For further details, see,
e.g.~\red{\cite[Chapter 2.3]{J.BerghandJ.Lofstrom.1976} or
}\cite[Bemerkung 5.11]{GGZ.1974}.
\red{The spaces $\V$ and $\Hi$ are equipped with the inner products
\begin{align*}
\ska[\V]{R}{S}
= \ska[\Hs(V^*, H)]{R}{S} + \ska[\Hs(H, V)]{R}{S}, \quad
\ska{R}{S}
= \ska[\Hs(H,H)]{R}{S},
\end{align*}
which also induce the norms $\|\cdot\|_{\V}$ and $\|\cdot\|_{\Hi}$,
respectively.
Furthermore, $\|\cdot\|_{\V^*}$ is the norm induced on the dual space
$\V^*$ of $\V$.}
It is important to note that the norm $\|\cdot\|_{\Hi}$ is submultiplicative (see, e.g., \cite[Theorem~4 in Ch.~XI.6.3]{N.DunfordundJ.Schwartz.1963}).
The inner product in $\Hi$ can be extended to the duality pairing between
$\V^*$ and $\V$ that is denoted by
\begin{align*}
\dual{\cdot }{\cdot}:  \V^* \times \V \to \R.
\end{align*}
As we have required the embedding $V \stackrel{c}{\incl} H$ to be compact,
this property extends to the operator spaces $\V$ and $\Hi$, i.e., the
embeddings
\begin{align*}
\V \stackrel{c}{\incl} \Hi \cong \Hi^* \stackrel{c}{\incl} \V^*
\end{align*}
are compact as well (see \cite[Proposition 2.1.]{R.Temam.1971}). Also the
norm estimate \eqref{eq2:normHV} transfers to $\V$ and $\Hi$:
 $ \|S\|_{\Hi} \leq \frac{C_{V,H}}{\sqrt{2}}
\|S \|_{\V}$ for all $S\in \V$.

For $p \in[1,\infty]$ and a Banach space $(X,\|\cdot\|_X)$, we introduce the
Bochner-Lebesgue space
\begin{align*}
L^p(0,T;X) = \left\{ u:[0,T] \to X : u \text{ is strongly measurable  and
}\|u\|_{L^p(0,T;X)} < \infty \right\}
\end{align*}
with the usual norm given by
\begin{align*}
\|u\|_{L^p(0,T;X)} =
\begin{cases}
\begin{displaystyle}
\left( \int_{0}^{T} \|u(t)\|_{X}^p dt \right)^{\frac{1}{p}}
\end{displaystyle}
\quad &\text{ for } p\in [1,\infty),\\
\begin{displaystyle}
\esssup_{t\in [0,T]} \|u(t)\|_{X}
\end{displaystyle}
\quad &\text{ for } p = \infty.
\end{cases}
\end{align*}
Further, we introduce the space $L^1(0,T;\Hi) + L^2(0,T;\V^*)$  which consists of the functions
$u: [0,T] \to \V^*$ such that there exist $u_1 \in L^1(0,T;\Hi)$ and $u_2 \in L^2(0,T; \V^*)$
that fulfill $u = u_1 + u_2$  . A norm for this space is given by
\begin{align*}
  \| u \|_{L^1(0,T;\Hi) + L^2(0,T;\V^*)}
	= \inf_{\substack{u_1 \in L^1(0,T;\Hi)\\
		u_2 \in L^2(0,T;\V^*)\\ u = u_1 +u_2}}  \left( \|u_1\|_{L^1(0,T;\Hi)}
+ \|u_2\|_{L^2(0,T;\V^*)} \right),
\end{align*}
see  \cite[Section~20]{L.Tartar.2006} for more details. This space in mind,
we define
\begin{equation}\label{eq2:DefW1}
\W = \left\{ u \in L^2(0,T;\V):u' \in L^1(0,T;\Hi) + L^2(0,T;\V^*) \right\}
\end{equation}
equipped with the norm
\begin{align*}
\|u\|_{\W} = \|u\|_{L^2(0,T;\V)} + \| u' \|_{L^1(0,T;\Hi) + L^2(0,T;\V^*)}.
\end{align*}
Note that $C^{\infty}([0,T]; \V)$ is dense in $\W$. Here,
$C^{\infty}([0,T]; \V)$ is the linear space of all infinitely many times
differentiable functions mapping $[0,T]$ into $\V$.
Furthermore, $\W$ is dense and continuously embedded into the space
$C([0,T];\Hi)$ of continuous functions mapping $[0,T]$ into $\Hi$. See
\cite[Section~20]{L.Tartar.2006} for more details.
By $C_c^{\infty}(0,T)$, we denote the linear space of infinitely many times
differentiable functions mapping $(0,T)$ into $\R$ having compact support.

In the following, it will be of importance to distinguish between different
types of convergence of a sequence $\seq{x}$ to a limit $x$ in a Banach
space $(X, \|\cdot\|_X)$. To this end, we will briefly explain the used
notation. We denote the (strong) convergence by
\begin{align*}
x_n \to x \text{ in } \ X, \quad \text{ i.e., } \quad \|x - x_n \|_X \to 0
\end{align*}
as $n \to \infty$, the weak convergence by
\begin{align*}
x_n \weak x \text{ in } \ X, \quad \text{ i.e., } \quad  \dual[X^*\times
X]{f}{x - x_n} \to 0
\ \text{ for every } \ f \in X^*
\end{align*}
as $n \to \infty$ and the weak$\ast$ convergence of a sequence $\seq{f}$
to a limit $f$ in $X^*$ by
\begin{align*}
f_n \weaks f \text{ in } \ X^*,
\quad \text{ i.e., } \quad
\dual[X^*\times X]{f - f_n}{x} \to 0 \ \text{ for every } \ x \in X.
\end{align*}
Here $X^*$ denotes the dual space of $X$ and $\dual[X^*\times
X]{\cdot}{\cdot}$ the duality pairing.
For more details, see, e.g., \cite[Chapter 3]{H.Brezis.2010}.
In the following, $c>0$ always denotes a generic constant.

\section{An algebraic operator Riccati equation} \label{section:algRic}

In this section we prove a generalization of the existence result for
algebraic Riccati equations given in \cite[Chapter II.3, Lemma 3.2 and
Theorem~3.9]{V.Barbu.1976}.

\begin{as}\label{as:AalgRic}
	Let $A \in  \Lin(V,V^*)$ be a \emph{strongly positive operator}, i.e., there exists
	a constant $\mu_{V}>0$ such that
	\begin{equation} \label{eq3:AstronglyPos}
	\dual[V^*\times V]{Au}{u} \geq \mu_V \|u\|_{V}^2
	\end{equation}
	holds for all $u \in V$.
\end{as}

\begin{remark} \label{remark:muH}
	Note that Assumptions~\ref{as:spaces}.\:and \ref{as:AalgRic}.\:immediately
	imply that
	\begin{align} \label{eq3:AstronglyPosH}
	\dual[V^*\times V]{Au}{u} \geq \mu_H \|u\|_{H}^2
	\end{align}
	for all $u \in V$ with $\mu_H = \frac{\mu_V}{C_{V.H}^2}$. Nevertheless
	\eqref{eq3:AstronglyPosH} may also be fulfilled with a constant $\mu_H$
	much larger than $\frac{\mu_V}{C_{V.H}^2}$. This, indeed, will
	be the case in our application in Lemma~\ref{lemma4:Apriori} below.
\end{remark}

\begin{remark}
  \red{Note that this positivity condition is not the same as the  positivity for
  a self-adjoint operator that we will introduce further below in
  \eqref{eq3:PositivityP}. For a self-adjoint operator, positivity is equivalent
  to solely nonnegative spectrum. If an operator is not self-adjoint,
  a positive spectrum is not enough to induce strong positivity. Therefore,
  Assumption~\ref{as:AalgRic} is stronger. A further discussion on this topic
  can be found, for example, in \cite[Chapter VI, Remark 2.29]{T.Kato.1995}.}
\end{remark}

\begin{lemma} \label{lemma3:AstarkPos}
	Let Assumptions~\ref{as:spaces}.\:and \ref{as:AalgRic}.\:be satisfied. Then
	the	estimate
	\begin{align*}
	\dual{P A}{P} \geq \mu_V \|P\|^2_{\Hs(V^*,H)}
	\end{align*}
  holds for all \red{$P \in \V$}. Moreover,
  \begin{align*}
	\dual{A^* P}{P} \geq \mu_V \|P\|^2_{\Hs(H, V)}
	\end{align*}
  holds for all \red{$P \in \V$}, where $A^*$ is the dual operator of $A$.
	In particular,
  \begin{align*}
	\dual{A^* P_1 + P_1A}{P_1} \geq \mu_V \|P_1\|^2_{\V}
	\quad \text{ and } \quad
	\ska{A^* P_2 + P_2A}{P_2} \geq 2\mu_H\|P_2\|^2_{\Hi}
	\end{align*}
  hold for all $P_1 \in \V$ and $P_2 \in \{ R \in \V:  A^* R + RA \in \Hi
	\}$.
\end{lemma}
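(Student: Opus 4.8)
The plan is to establish the two basic estimates for a single Hilbert--Schmidt operator by writing the traces defining the inner products in terms of a suitable orthonormal basis, applying the strong positivity assumption termwise, and then summing; the two "in particular" estimates follow by adding the two basic ones and by going back to \eqref{eq3:AstronglyPosH} in the $\Hi$-case. Throughout, recall that for $P\in\V$ the operator $P$ maps $V^*\to H$ and $H\to V$, so $PA\colon V\to H$ makes sense (via $V\xrightarrow{A}V^*\xrightarrow{P}H$) and lies in $\Hs(H,H)$ because $P\in\Hs(V^*,H)$ and $A$ is bounded; in particular $\dual{PA}{P}$ is well defined as the extension of $\ska{\cdot}{\cdot}$, and in fact equals $\ska{PA}{P}$.

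First I would prove $\ska{PA}{P}\ge \mu_V\|P\|_{\Hs(V^*,H)}^2$. Fix an orthonormal basis $\seq{e}$ of $V$; since $V$ is dense in $V^*$ one may use, after a normalization, the images of the $e_n$ as a convenient system, but cleaner is the following: let $\seq{f}$ be an orthonormal basis of $H$ and compute $\ska{PA}{P}=\tr\big((PA)^*P\big)=\sum_n \ska[H]{PAe_n}{Pe_n}$ — no, the right move is to pick the basis adapted to the $\Hs(V^*,H)$-norm. Concretely, use that $\|P\|_{\Hs(V^*,H)}^2=\sum_n\|Pg_n\|_H^2$ for an orthonormal basis $\seq{g}$ of $V^*$, and that by the Gelfand-triple identification there is an orthonormal basis $\seq{e}$ of $V$ with $e_n$ corresponding to $g_n$ up to the embedding; then $\ska{PA}{P}=\sum_n\ska[H]{P A e_n}{P e_n}$ can be compared termwise. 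The key termwise inequality is that for each $n$, writing $w_n=Pe_n\in H$ and using self-adjointness tricks, $\ska[H]{PAe_n}{Pe_n}\ge \mu_V\|Pe_n\|_{?}$ — this is the delicate point and I will handle it by instead expanding $\ska{PA}{P}=\tr(A^*P^*P)=\dual[V^*\times V]{A \,(\cdot)}{\cdot}$ summed against the positive operator $P^*P\in\Lin(V,V^*)$; since $P^*P$ is a nonnegative self-adjoint trace-class-type operator one can diagonalize it, $P^*P=\sum_k \lambda_k\,\langle\cdot,\phi_k\rangle\phi_k$ with $\lambda_k\ge0$ and $\phi_k$ orthonormal in the appropriate space, and then $\ska{PA}{P}=\sum_k\lambda_k\dual[V^*\times V]{A\phi_k}{\phi_k}\ge \mu_V\sum_k\lambda_k\|\phi_k\|_V^2=\mu_V\,\tr(P^*P)=\mu_V\|P\|_{\Hs(V^*,H)}^2$, using \eqref{eq3:AstronglyPos} in the middle step. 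The estimate $\dual{A^*P}{P}\ge\mu_V\|P\|_{\Hs(H,V)}^2$ is entirely symmetric: $\dual{A^*P}{P}=\tr(P^*A^*P)=\tr(A^* PP^*)=\dual[V^*\times V]{A^*(\cdot)}{\cdot}$ against the nonnegative operator $PP^*$, and $\dual[V^*\times V]{A^*u}{u}=\dual[V^*\times V]{Au}{u}\ge\mu_V\|u\|_V^2$ by Assumption~\ref{as:AalgRic}, so diagonalizing $PP^*$ gives the claim with $\|P\|_{\Hs(H,V)}^2=\tr(PP^*)$.

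For the "in particular" part, adding the two displayed estimates gives $\dual{A^*P_1+P_1A}{P_1}\ge\mu_V\big(\|P_1\|_{\Hs(V^*,H)}^2+\|P_1\|_{\Hs(H,V)}^2\big)=\mu_V\|P_1\|_{\V}^2$ for all $P_1\in\V$, by the definition of the $\V$-norm. For $P_2$ in the indicated subspace, $A^*P_2+P_2A\in\Hi$ so $\ska{A^*P_2+P_2A}{P_2}$ is a genuine $\Hi$-inner product; repeating the diagonalization argument but now invoking the sharper constant $\mu_H$ from \eqref{eq3:AstronglyPosH} (which, per Remark~\ref{remark:muH}, may exceed $\mu_V/C_{V,H}^2$), one gets $\ska{P_2 A}{P_2}=\sum_k\lambda_k\dual[V^*\times V]{A\phi_k}{\phi_k}\ge\mu_H\sum_k\lambda_k\|\phi_k\|_H^2=\mu_H\,\tr(P_2^*P_2)=\mu_H\|P_2\|_{\Hi}^2$ and likewise $\ska{A^*P_2}{P_2}\ge\mu_H\|P_2\|_{\Hi}^2$, whence the factor $2\mu_H$ upon adding.

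The main obstacle is making the diagonalization/spectral argument rigorous in the Gelfand-triple setting: one must check that $P^*P$ (respectively $PP^*$) is a well-defined nonnegative self-adjoint operator of the right class so that its eigenexpansion converges, that the eigenvectors can be taken in $V$ (so that \eqref{eq3:AstronglyPos} applies to them), and that the termwise-summed series of $\dual[V^*\times V]{A\phi_k}{\phi_k}$ converges absolutely to $\ska{PA}{P}$ — equivalently, that $\tr$ and the bounded operator $A$ may be interchanged with the limits. All of this is standard Hilbert--Schmidt/trace-class bookkeeping once one notes $P\in\Hs(V^*,H)$ forces $P^*P$ to be trace-class on $V$ and $A$ is bounded $V\to V^*$, but it is the only step requiring care; everything else is symmetry and addition.
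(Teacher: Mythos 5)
Your overall strategy---reduce the operator inequality to the scalar coercivity \eqref{eq3:AstronglyPos} applied termwise to a countable family of vectors of $V$ and then sum---is the right one, and it is essentially the argument of \cite[Chapter II.3, Lemma 3.4]{V.Barbu.1976}, which the paper cites instead of reproducing. However, the step you yourself flag as delicate is exactly where your bookkeeping, as written, goes wrong, and it is also where you make the argument harder than necessary. A minor point first: $PA$ need not lie in $\Hs(H,H)$ (that would require extending $PA$ from $V$ to $H$); it lies in $\Hs(V,H)\subseteq\V^*$, which is all that is needed for $\dual{PA}{P}$ to make sense. The substantive issue is the object ``$P^*P\in\Lin(V,V^*)$'' that you propose to diagonalize: this is the Gram operator of $P$ viewed as a map $V\to H$, its trace (after composing with the Riesz map of $V$) equals $\|P\|^2_{\Hs(V,H)}$ and \emph{not} $\|P\|^2_{\Hs(V^*,H)}$, and with its eigenvectors the identity $\dual{PA}{P}=\sum_k\lambda_k\dual[V^*\times V]{A\phi_k}{\phi_k}$ fails. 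What your computation actually needs is the singular system of $P^*\colon H\to V$ (the adjoint of $P\colon V^*\to H$), whose squared singular values do sum to $\|P^*\|^2_{\Hs(H,V)}=\|P\|^2_{\Hs(V^*,H)}$; with that choice your chain of equalities and inequalities is correct, so the gap is repairable.

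But no diagonalization is needed at all. Since $\dual{\cdot}{\cdot}$ extends the $\Hi$-inner product $\ska{S}{R}=\tr(R^*S)=\tr(SR^*)$, one may expand it over an arbitrary orthonormal basis $(e_n)$ of $H$ in either of the two equivalent forms $\sum_n\ska[H]{Se_n}{Re_n}$ or $\sum_n\ska[H]{S^*e_n}{R^*e_n}$, and obtain directly
\begin{align*}
\dual{A^*P}{P}&=\sum_{n}\dual[V^*\times V]{A\,Pe_n}{Pe_n}\ge\mu_V\sum_n\|Pe_n\|_V^2=\mu_V\|P\|^2_{\Hs(H,V)},\\
\dual{PA}{P}&=\sum_{n}\dual[V^*\times V]{A\,P^*e_n}{P^*e_n}\ge\mu_V\sum_n\|P^*e_n\|_V^2=\mu_V\|P\|^2_{\Hs(V^*,H)},
\end{align*}
using $Pe_n\in V$ (as $P\in\Hs(H,V)$) in the first line and $P^*e_n\in V$ (as $P^*\in\Hs(H,V)$) in the second. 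Your handling of the two ``in particular'' assertions is fine: add the two estimates for $P_1\in\V$, and rerun the same expansions with \eqref{eq3:AstronglyPosH} in place of \eqref{eq3:AstronglyPos} together with $\|P_2^*\|_{\Hi}=\|P_2\|_{\Hi}$ to obtain the constant $2\mu_H$.
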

\begin{proof}
	A proof can be found in \cite[Chapter II.3, Lemma 3.4]{V.Barbu.1976}.
\end{proof}

The following results will be stated for (indefinite) self-adjoint operators in
$\Hi$ that
have a lower bound for the  possibly negative eigenvalues.
Let $P \in \Hi$ be self-adjoint. Then it is also compact, since $P$ is a
Hilbert--Schmidt operator. Applying the Hilbert--Schmidt theorem (see
\cite[Theorem~5, Chapter VII.4.5]{N.DunfordundJ.Schwartz.1957}) assures
the existence of an orthonormal system $\left(e_n\right)_{n\in\N}$ of
eigenvectors of $P$ in $H$. Using the eigenvectors
$\left(e_n\right)_{n\in\N}$, $P$ can be represented through
\begin{align} \label{eq3:repP}
P = \sum_{n=1}^{\infty} \alpha_n \ska[H]{e_n}{\cdot} e_n,
\end{align}
where each $\alpha_n \in \R$, $n\in \N$, is an eigenvalue of the operator $P$.
As every self-adjoint operator in $\Hi$ has a representation like this, we can
introduce a lower bound for self-adjoint operators.
For $\gamma \in \R$, we write $P \geq \gamma$ if every
eigenvalue of $P$ is greater or equal to $\gamma$. This is equivalent to the
condition $P - \gamma I\geq 0$, i.e.,
\begin{align}\label{eq3:PositivityP}
\ska[H]{(P - \gamma I)u}{u} \geq 0
\end{align}
for all $u \in H$.
This in mind, we introduce the set
\begin{equation*}
C_{-\gamma} = \{ P \in \Hi \with P = P^*, P \geq -\gamma \}
\end{equation*}
of operators whose eigenvalues are bounded from below by $-\gamma$.

For the linear part of the Riccati equation we introduce the linear \emph{Lyapunov} operator
\begin{equation*}
\Ac_0  : \V \to \V^* , \quad  P \mapsto A^* P + P A
\end{equation*}
but use the same notation also for its restriction
\begin{equation*}
\Ac_0: \dom(\Ac_0) \subseteq \Hi \to \Hi, \quad P \mapsto A^* P + P A
\end{equation*}
with $\dom(\Ac_0) = \{ P \in \V \with \Ac_0 P \in \Hi  \}$.
If Assumption~\ref{as:AalgRic}.\:holds, then due to the positivity of $A$ and
$A^*$ and Lemma~\ref{lemma3:AstarkPos}, the operator $\Ac_0$ is
\emph{m-accretive} and $\ska{\Ac_0 P }{P}\geq 0$ is fulfilled for every $P \in
\dom(\Ac_0)$.
Therefore, $\lambda \Ac_0 +  I$ is surjective for every $\lambda>0$, see,
e.g., \cite[Theorem~2.2]{V.Barbu.2010}. The
following lemma provides some important properties for the resolvent of
$\lambda \Ac_0$, $\lambda>0$.

\begin{lemma} \label{lemma3:ALipPos}
	Let Assumptions~\ref{as:spaces}.\:and \ref{as:AalgRic}.\:be satisfied and $\lambda >0$. Then $(\lambda \Ac_0 + I)^{-1} :
	\Hi \to \dom(\Ac_0)$ is well-defined, linear, bounded and fulfills the
	estimate
	\begin{align*}
	\| (\lambda \Ac_0 + I)^{-1} Q \|_{\Hi} \leq \frac{1}{1 + 2 \lambda \mu_H}
	\|Q\|_{\Hi}
	\end{align*}
	for all $Q \in \Hi$.  Furthermore, for $Q \in C_{-\gamma}$
  \begin{align*}
	(\lambda \Ac_0 + I)^{-1}Q \in C_{-\frac{\gamma}{1+ 2 \lambda \mu_H} }
	\end{align*}
  is fulfilled.
\end{lemma}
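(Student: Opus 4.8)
The plan is to establish the three assertions in turn, exploiting the m-accretivity of $\Ac_0$ already recorded in the text. Well-definedness, linearity and boundedness of $(\lambda\Ac_0+I)^{-1}$ on $\Hi$ follow from the surjectivity of $\lambda\Ac_0+I$ (quoted from \cite[Theorem~2.2]{V.Barbu.2010}) together with injectivity: if $(\lambda\Ac_0+I)P=0$ then testing with $P$ gives $\lambda\ska{\Ac_0 P}{P}+\|P\|_{\Hi}^2=0$, and since $\ska{\Ac_0 P}{P}\geq 0$ on $\dom(\Ac_0)$ this forces $P=0$. For the quantitative resolvent estimate I would let $P=(\lambda\Ac_0+I)^{-1}Q$, so $\lambda\Ac_0 P + P = Q$, take the $\Hi$-inner product with $P$, and use the sharpened Lyapunov bound $\ska{\Ac_0 P}{P}=\ska{A^*P+PA}{P}\geq 2\mu_H\|P\|_{\Hi}^2$ from Lemma~\ref{lemma3:AstarkPos} (valid here since $P\in\dom(\Ac_0)$, i.e. $A^*P+PA\in\Hi$). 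This yields $(1+2\lambda\mu_H)\|P\|_{\Hi}^2\leq \ska{Q}{P}\leq\|Q\|_{\Hi}\|P\|_{\Hi}$, and dividing by $\|P\|_{\Hi}$ (the case $P=0$ being trivial) gives the claimed contraction estimate.

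For the order-preservation statement, suppose $Q\in C_{-\gamma}$ and set again $P=(\lambda\Ac_0+I)^{-1}Q$; I must show $P=P^*$ and $P\geq -\gamma/(1+2\lambda\mu_H)$. Self-adjointness of $P$ follows because $\Ac_0$ maps self-adjoint operators to self-adjoint operators (as $(A^*P+PA)^* = A^*P^*+P^*A$ when $P=P^*$) and $\lambda\Ac_0+I$ is injective, so the unique solution of a self-adjoint right-hand side is self-adjoint. For the lower eigenvalue bound, write $\beta=\gamma/(1+2\lambda\mu_H)$ and consider $\tilde P = P+\beta I$; I would like to conclude $\tilde P\geq 0$. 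The natural route is a maximum-principle / positivity argument: since $P$ is compact and self-adjoint it attains its least eigenvalue, say $\alpha_{\min}=\ska[H]{Pe}{e}$ for a unit eigenvector $e$, and then $\ska[H]{Qe}{e}=\lambda\ska[H]{(A^*P+PA)e}{e}+\ska[H]{Pe}{e}$. Bounding $\ska[H]{Qe}{e}\geq -\gamma$ from below and handling the term $\ska[H]{(A^*P+PA)e}{e}$ using that $e$ is the minimizing eigenvector should produce $\alpha_{\min}\geq -\beta$.

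I expect the third part — the eigenvalue lower bound — to be the main obstacle, because the Lyapunov term $\ska[H]{(A^*P+PA)e}{e}$ does not immediately simplify: $e$ is an eigenvector of $P$ but not of $A$, so one cannot directly replace $A^*P+PA$ by something involving $\alpha_{\min}$. The cleanest fix is probably to argue at the resolvent level rather than with a single eigenvector: test the identity $\lambda(A^*P+PA)+P-Q = 0$ against a carefully chosen rank-one operator, or invoke a known order-preservation property of m-accretive Lyapunov resolvents (the analogue of the fact that $(\lambda\Ac_0+I)^{-1}$ preserves the cone of positive semidefinite operators, shifted appropriately), which is exactly the type of statement Lemma~\ref{lemma3:AstarkPos} is designed to feed into. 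Once the cone-preservation (the $\gamma=0$ case, giving $(\lambda\Ac_0+I)^{-1}$ maps $C_0$ into $C_0$) is in hand, the general case follows by applying it to $Q+\gamma I\in C_0$ together with the explicit action of $(\lambda\Ac_0+I)^{-1}$ on the identity, namely that $(\lambda\Ac_0+I)^{-1}(\gamma I)$ has all eigenvalues at least $\gamma/(1+2\lambda\mu_H)$, which one reads off from the resolvent estimate combined with positivity.
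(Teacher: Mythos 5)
Your handling of well-definedness and of the resolvent bound is correct, and in fact slightly different from the paper's: you invoke m-accretivity and the bound $\ska{\Ac_0 P}{P}\geq 2\mu_H\|P\|_{\Hi}^2$ from Lemma~\ref{lemma3:AstarkPos} to deduce $(1+2\lambda\mu_H)\|P\|_{\Hi}^2\leq\ska{Q}{P}$, whereas the paper obtains $\lambda\Ac_0+I:\V\to\V^*$ bijective via Lax--Milgram and then derives the norm estimate from an explicit eigenvalue computation. Both routes are fine; yours is arguably more economical for that particular estimate.

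The trouble is the third assertion, and specifically the sentence where you declare the eigenvector argument an ``obstacle''. It is not. Let $(e_n)$ be an orthonormal system of eigenvectors of the self-adjoint compact operator $P$ with eigenvalues $\alpha_n$, so $Pe_n=\alpha_n e_n\in V$. Then
\begin{align*}
\dual[V^*\times V]{A^*Pe_n}{e_n}=\alpha_n\dual[V^*\times V]{A^*e_n}{e_n}=\alpha_n\dual[V^*\times V]{Ae_n}{e_n},
\end{align*}
and for the other term, since $P\in\Hs(V^*,H)$ is the adjoint of $P\in\Hs(H,V)$ and $P$ is self-adjoint in $H$,
\begin{align*}
\dual[V^*\times V]{PAe_n}{e_n}=\ska[H]{PAe_n}{e_n}=\dual[V^*\times V]{Ae_n}{Pe_n}=\alpha_n\dual[V^*\times V]{Ae_n}{e_n}.
\end{align*}
So $\ska[H]{Qe_n}{e_n}=2\lambda\alpha_n\dual[V^*\times V]{Ae_n}{e_n}+\alpha_n$, giving $\alpha_n=(1+2\lambda\dual[V^*\times V]{Ae_n}{e_n})^{-1}\ska[H]{Qe_n}{e_n}\geq -\gamma/(1+2\lambda\mu_H)$. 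The point you missed is that one does \emph{not} need $e_n$ to be an eigenvector of $A$: the eigenvector property of $P$, together with self-adjointness of $P$ and the definition of $A^*$, reduces \emph{both} of $A^*P$ and $PA$ to $\alpha_n\dual{Ae_n}{e_n}$. This is exactly the paper's proof and it yields the norm bound and the lower eigenvalue bound simultaneously.

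The alternative route you then propose does not work in this infinite-dimensional setting. You want to apply $(\lambda\Ac_0+I)^{-1}$ to $Q+\gamma I$ and to $\gamma I$ itself, but $I\notin\Hi$: the identity on an infinite-dimensional $H$ is not Hilbert--Schmidt, so $Q+\gamma I\notin\Hi$ and $(\lambda\Ac_0+I)^{-1}(\gamma I)$ is undefined within the framework of this lemma. Even setting that aside, the claim that the resolvent estimate ``combined with positivity'' yields a \emph{lower} bound $\geq \gamma/(1+2\lambda\mu_H)$ for the eigenvalues of $(\lambda\Ac_0+I)^{-1}(\gamma I)$ does not follow: the resolvent estimate is an upper bound on the $\Hi$-norm and hence on the eigenvalue magnitudes, and says nothing from below. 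You should abandon this branch and carry out the eigenvector computation you initially sketched, which goes through cleanly.
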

\begin{proof}
	The operator
	\begin{align*}
	\lambda  \Ac_0 + I: \V \to \V^*, \quad R \mapsto \lambda (A^* R + R A) + R
	\end{align*}
  is linear, bounded, and strongly positive, \red{since}
  \begin{align*}
	\dual{(\lambda  \Ac_0 + I) R}{R }
	\geq \lambda \mu_V \|R\|_{\V}^2 + \|R\|^2_{\Hi}
	\geq \lambda \mu_V \|R\|_{\V}^2
	\end{align*}
  \red{is fulfilled for every $R \in \V$.} Therefore, $\lambda  \Ac_0 + I: \V \to
  \V^*$  is bijective due
  to the
	Lax-Milgram lemma, and for every $Q\in \Hi \subseteq \V^*$ there exists a
	unique $P \in \dom(\Ac_0)$ such that
  \begin{align*}
	\left(\lambda \Ac_0 +I\right) P = Q.
	\end{align*}
  This proves the existence of the linear operator $(\lambda  \Ac_0 +
	I)^{-1}$ in $\Hi$. For a self-adjoint operator $Q$, it follows that
  \begin{equation*}\label{eq3:LinAlgEq}
  \left(\lambda \Ac_0 +I\right) P =
	Q = Q^* = \left(\left(\lambda \Ac_0 +I\right) P \right)^*= \lambda (A^*P^* +
	P^*A) + P^* =  \left(\lambda \Ac_0 +I\right) P^* \, .
	\end{equation*}
  Since the mapping $\lambda \Ac_0 + I: \V \to\V^* $ is injective, the
	operators $P$ and $P^*$ coincide, i.e., $P$ is self-adjoint as it is a
	bounded, symmetric 	operator in $H$. In the following, we represent
	$P$ as in \eqref{eq3:repP}.
	Exploiting the fact that the elements $e_n$, $n\in \N$, from
	\eqref{eq3:repP} are eigenvectors of $P$, we obtain that
	\begin{align*}
	\ska[H	]{Qe_n}{e_n}
	&= \lambda \dual[V^*\times V]{A^*Pe_n}{e_n} +\lambda
	\dual[V^*\times V]{PAe_n}{e_n}  + \ska[H]{Pe_n}{e_n} \\
	&=2 \lambda \alpha_n \dual[V^*\times V]{Ae_n}{e_n} + \alpha_n.
	\end{align*}
  This implies that
  \begin{align*}
	\alpha_n = (1+ 2 \lambda \dual[V^*\times V]{Ae_n}{e_n})^{-1}
	\ska[H]{Qe_n}{e_n},
	\end{align*}
  which leads to the estimate
  \begin{align*}
	\| (\lambda \Ac_0 + I)^{-1} Q \|_{\Hi}^2 = \| P \|_{\Hi}^2
	&= \sum_{n=1}^{\infty} \alpha_n^2
	= \sum_{n=1}^{\infty} \left( \frac{\ska[H]{Qe_n}{e_n}}{1+ 2 \lambda
		\dual[V^*\times V]{Ae_n}{e_n}} \right)^2 \\
	&\leq \sum_{n=1}^{\infty} \left( \frac{\ska[H]{Qe_n}{e_n}}{1+ 2 \lambda
	\mu_H		\|e_n \|^2_H } 	\right)^2
	\le \left( \frac{1}{1+ 2 \lambda \mu_H} \right)^2 \|Q\|_{\Hi}^2 .
	\end{align*}
  Since we have the lower bound $\ska[H]{Qe_n}{e_n} \geq -\gamma$ for
  $Q\in C_{-\gamma}$, we obtain that
	\begin{align*}
	\alpha_n = (1+ 2 \lambda \dual[V^*\times V]{Ae_n}{e_n})^{-1}
	\ska[H]{Qe_n}{e_n} \geq -\frac{\gamma}{1+ 2 \lambda \mu_H}
	\end{align*}
  for every $n\in \N$. As $P = \left(\lambda \Ac_0 +I\right)^{-1} Q$ is
  self-adjoint by definition and has
	a suitable lower bound for every eigenvalue, this proves the second
	assertion of the lemma.
\end{proof}

In the following, we derive some corresponding results for the full
Riccati equation.
In order to do this, we define the nonlinear operator
\begin{equation}\label{eq3:defB}
\Bc : \Hi \to \Hi, \quad P \mapsto P^2.
\end{equation}

\begin{lemma} \label{lemma3:QuadTermNeg}
	Let Assumption~\ref{as:spaces}.\:be satisfied and let $\gamma \geq
	0$ be	given.
	If $P\in C_{-\gamma}$ then
	\begin{align*}
	\ska{PR}{R} \geq -\gamma \|R\|_{\Hi}^2 \quad \text{ and } \quad \ska{RP}{R}
	\geq -\gamma \|R\|_{\Hi}^2
	\end{align*}
  is fulfilled for every $R \in \Hi$ and
  \begin{align*}
	\dual{P R + RP}{R} \geq - \gamma C_{V,H}^2 \|R\|^2_{\V}
	\end{align*}
  is fulfilled for every $R \in \V$.
\end{lemma}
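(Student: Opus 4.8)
The plan is to reduce all three inequalities to the defining property of the set $C_{-\gamma}$, namely that $P = P^*$ and $\ska[H]{Pu}{u} \geq -\gamma\|u\|_H^2$ for every $u \in H$, and to exploit that the inner product $\ska{\cdot}{\cdot}$ on $\Hi = \Hs(H,H)$ may be evaluated along any orthonormal basis of $H$. One should first record that for $R \in \Hi$ the products $PR$ and $RP$ again lie in $\Hi$, since $P$ is bounded and $R$ is a Hilbert--Schmidt operator, so all expressions below are meaningful.

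For $\ska{PR}{R}$, fix an arbitrary orthonormal basis $\seq{e}$ of $H$ and expand the $\Hi$ inner product as a series:
\[
\ska{PR}{R} = \sum_{n=1}^{\infty} \ska[H]{P(Re_n)}{Re_n} \geq -\gamma \sum_{n=1}^{\infty} \|Re_n\|_H^2 = -\gamma\|R\|_{\Hi}^2 ,
\]
where the inequality applies the lower bound on $P$ to each vector $Re_n \in H$. For $\ska{RP}{R}$, I would instead choose $\seq{e}$ to be an orthonormal basis of $H$ consisting of eigenvectors of the compact self-adjoint operator $P$ --- extending the orthonormal system of \eqref{eq3:repP} by an orthonormal basis of $\Kern P$, to which the eigenvalue $0 \geq -\gamma$ is assigned --- so that with the eigenvalues $\alpha_n \geq -\gamma$ one obtains
\[
\ska{RP}{R} = \sum_{n=1}^{\infty} \ska[H]{R(Pe_n)}{Re_n} = \sum_{n=1}^{\infty} \alpha_n\|Re_n\|_H^2 \geq -\gamma\|R\|_{\Hi}^2 .
\]
Alternatively, $\ska{RP}{R} = \ska{PR^*}{R^*}$ by cyclicity of the trace together with $\|R^*\|_{\Hi} = \|R\|_{\Hi}$, which reduces this case to the previous one.

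For the last estimate, take $R \in \V \subseteq \Hi$. Then $PR + RP \in \Hi \subseteq \V^*$, so the duality pairing $\dual{\cdot}{\cdot}$ restricted to this element coincides with the inner product of $\Hi$, and adding the two inequalities just established gives
\[
\dual{PR + RP}{R} = \ska{PR}{R} + \ska{RP}{R} \geq -2\gamma\|R\|_{\Hi}^2 .
\]
Finally, the embedding estimate $\|R\|_{\Hi} \leq \tfrac{C_{V,H}}{\sqrt{2}}\|R\|_{\V}$, obtained from \eqref{eq2:normHV} as noted in Section~\ref{section:Notation}, yields $2\gamma\|R\|_{\Hi}^2 \leq \gamma C_{V,H}^2\|R\|_{\V}^2$, hence $\dual{PR + RP}{R} \geq -\gamma C_{V,H}^2\|R\|_{\V}^2$, which is the claim.

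There is no serious obstacle: the statement is an elementary consequence of the spectral representation of compact self-adjoint Hilbert--Schmidt operators together with the basis-independence of the Hilbert--Schmidt inner product. The only points deserving a little care are that the eigenvectors of $P$ need not span $H$ (harmless precisely because $\gamma \geq 0$), and that one verifies $PR, RP \in \Hi$ so that the $\Hi$ inner products --- and the reduction of the $\V^*$--$\V$ pairing to one of them --- are legitimate.
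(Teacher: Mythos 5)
Your proof is correct and takes essentially the same route as the paper, which merely observes that the positivity of $P + \gamma I$ on $H$ lifts to $\ska{(P+\gamma I)R}{R} \geq 0$ on $\Hi$ and declares the remaining two estimates ``analogous''; you simply fill in the details the paper suppresses, including the basis expansion, the eigenvector (or trace-cyclicity) argument for $\ska{RP}{R}$, and the embedding constant $\|R\|_{\Hi} \leq \tfrac{C_{V,H}}{\sqrt{2}}\|R\|_{\V}$. The extra care you take in extending the eigenvector system over $\Kern P$ and in justifying that the $\V^*$--$\V$ pairing reduces to the $\Hi$ inner product is exactly the right kind of bookkeeping, and the alternative via $\ska{RP}{R} = \ska{PR^*}{R^*}$ is a clean shortcut.
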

\begin{proof}
	Since we assume that $P \in C_{-\gamma}$, the operator $P + \gamma I$ is
	positive such that
	\begin{align*}
	\ska{(P + \gamma I) R}{R}
	\geq 0
	\end{align*}
	for all $R \in \Hi$. This implies, in particular, the first assertion.
	The other assertions follow in an analogous manner.
\end{proof}

For $\lambda >0$, the operator $ \lambda \Bc + I$ is neither linear nor
Lipschitz continuous on $\Hi$. Still the following lemmas provide a suitable
boundedness and Lipschitz continuity of the inverse of the restriction of
the operator onto a suitable domain $C_{-\gamma}$, $\gamma>0$.
\red{Note that it is of particular importance that $C_{-\gamma}$ consists of
  self-adjoint Hilbert--Schmidt operators. For each operator, we therefore
  find a sequence of square summable eigenvalues, which is a
  crucial argument for some technical details in the following proofs.}

\begin{remark} \label{remark:1dEx}
	In order to get a better intuition for such an inverse of a restriction of the operator $ (\lambda \Bc + I)$ for
	given $\lambda >0$, it is
	helpful to consider the problem in $\R$ at first. We want to find $P \in \R$
	such that for given $\Qc \in \R$
	\begin{align*}
	( \lambda \Bc + I)P = \lambda P^2  + P= Q.
	\end{align*}
	This equation can easily be solved in $\R$ if $\lambda >0$ is
	small enough and has the solutions
	\begin{align*}
	P = \frac{1}{2\lambda } \left(-1\pm \sqrt{1 + 4\lambda Q }
	\right).
	\end{align*}
	For $Q > 0$, this admits one positive and one negative solution.
	For $\lambda Q \in \left(-\frac{1}{4},0\right)$, the problem has two
	different negative solutions.
\end{remark}

Analogously to Remark~\ref{remark:1dEx}, looking at the operator setting
again, the equation
 \begin{align} \label{eq3:equationB}
 (\lambda \Bc + I) P =  \lambda P^2 + P = Q.
  \end{align}
possesses only one positive solution $P$ if $Q \geq 0$ but more than one
negative solution if $Q <	0$.	In the following, we will always work with
the \red{maximal} solution.

\begin{lemma} \label{lemma3:BbdNegCoefB}
  {Let Assumption~\ref{as:spaces}.\:be satisfied and let
  $\gamma \geq 0$ be given.
  Then for $\lambda \in(0,\frac{1}{4\gamma})$ if $\gamma >0$, and
  $\lambda \in(0,\infty)$ if $\gamma = 0$, and every $Q\in C_{-\gamma}$
  there exists a unique $P \in C_{\frac{ -2\gamma }{1 + \sqrt{1 - 4 \lambda
  \gamma } } }$ such that \eqref{eq3:equationB}
  is fulfilled. In the following, $P$ is denoted by $(\lambda \Bc + I)^{-1} Q$.
  Moreover, for $Q,Q_1,Q_2 \in C_{-\gamma}$ the estimates
  \begin{align}\label{eq4:Bbound}
  \| (\lambda \Bc + I)^{-1} Q\|_{\Hi}
  \leq \frac{ 2}{ 1 + \sqrt{1 - 4\lambda \gamma }}
  \|Q\|_{\Hi}
  \end{align}
  and
  \begin{align}\label{eq4:BLip}
  \| (\lambda \Bc + I)^{-1} Q_1 - (\lambda \Bc + I)^{-1} Q_2 \|_{\Hi} \leq
  \frac{1 + \sqrt{ 1 - 4\lambda \gamma } }{ 1 + \sqrt{1 -
      4\lambda \gamma }
    -4 \lambda	\gamma} \| Q_1 - Q_2 \|_{\Hi}
  \end{align}
  hold true.}
\end{lemma}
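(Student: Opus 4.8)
The plan is to mimic the scalar picture from Remark~\ref{remark:1dEx} at the level of eigenvalues, exploiting that every $Q \in C_{-\gamma}$ is self-adjoint and Hilbert--Schmidt, hence diagonalizable in an orthonormal basis. First I would observe that if $P$ solves $\lambda P^2 + P = Q$ and $P$ is self-adjoint, then $P$ commutes with $Q$ (since $Q$ is a polynomial in $P$), so $P$ and $Q$ are simultaneously diagonalizable: there is an orthonormal basis $(e_n)$ with $Pe_n = p_n e_n$ and $Qe_n = q_n e_n$, and the operator equation decouples into the scalar equations $\lambda p_n^2 + p_n = q_n$ for each $n$. For existence I would run this in reverse: take the orthonormal eigenbasis $(e_n)$ of $Q$ with eigenvalues $q_n \geq -\gamma$, and for each $n$ set
\begin{equation*}
p_n = \frac{1}{2\lambda}\left( -1 + \sqrt{1 + 4\lambda q_n}\,\right),
\end{equation*}
which is the \emph{maximal} real root and is well-defined because $1 + 4\lambda q_n \geq 1 - 4\lambda\gamma > 0$ under the hypothesis on $\lambda$ (and $\geq 1$ when $\gamma = 0$). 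Then $P := \sum_n p_n \ska[H]{e_n}{\cdot}\, e_n$ is the candidate solution; one checks it is self-adjoint and that $\lambda P^2 + P = Q$ holds, provided $P \in \Hi$, i.e. $\sum_n p_n^2 < \infty$.

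The second step is the three quantitative bounds, all of which reduce to scalar estimates on the function $q \mapsto \tfrac{1}{2\lambda}(-1+\sqrt{1+4\lambda q})$ on the half-line $q \geq -\gamma$. For the lower bound $p_n \geq -2\gamma/(1+\sqrt{1-4\lambda\gamma})$ I would rationalize: $p_n = 2q_n/(1 + \sqrt{1+4\lambda q_n})$, and since the map is increasing in $q_n$ its minimum over $q_n \geq -\gamma$ is attained at $q_n = -\gamma$, giving exactly $-2\gamma/(1+\sqrt{1-4\lambda\gamma})$; this simultaneously shows $P \in C_{\frac{-2\gamma}{1+\sqrt{1-4\lambda\gamma}}}$ and, via $\gamma' := 2\gamma/(1+\sqrt{1-4\lambda\gamma}) \geq 0$, that $P$ lies in a set of the required form. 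For the norm bound \eqref{eq4:Bbound}, the rationalized form $|p_n| = |q_n|\cdot 2/(1+\sqrt{1+4\lambda q_n})$ together with $\sqrt{1+4\lambda q_n} \geq \sqrt{1-4\lambda\gamma}$ yields $|p_n| \leq \tfrac{2}{1+\sqrt{1-4\lambda\gamma}}|q_n|$ for every $n$; squaring and summing over $n$ gives \eqref{eq4:Bbound} and in particular the square-summability needed to close the existence argument, so I would fold this estimate into the existence step rather than proving it separately afterwards. Uniqueness within $C_{\frac{-2\gamma}{1+\sqrt{1-4\lambda\gamma}}}$ follows because any competitor $\widetilde P$ in that set is self-adjoint, hence commutes with $Q$, hence its eigenvalues satisfy the same scalar quadratic with the sign constraint $\widetilde p_n \geq -2\gamma/(1+\sqrt{1-4\lambda\gamma}) > \tfrac{1}{2\lambda}(-1-\sqrt{1-4\lambda\gamma})$, which forces $\widetilde p_n = p_n$ (the smaller root is excluded).

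For the Lipschitz estimate \eqref{eq4:BLip}, let $Q_1, Q_2 \in C_{-\gamma}$ with solutions $P_1, P_2$. The difficulty here is that $P_1$ and $P_2$ need not commute, so I cannot simultaneously diagonalize them; I would instead argue operator-theoretically. From $\lambda P_i^2 + P_i = Q_i$ subtract to get $\lambda(P_1^2 - P_2^2) + (P_1 - P_2) = Q_1 - Q_2$, i.e.
\begin{equation*}
(P_1 - P_2) + \tfrac{\lambda}{2}\big( P_1(P_1-P_2) + (P_1-P_2)P_1 + (P_1-P_2)P_2 + P_2(P_1-P_2) \big) = Q_1 - Q_2.
\end{equation*}
Testing with $P_1 - P_2$ in the $\Hi$ inner product and using Lemma~\ref{lemma3:QuadTermNeg} — which applies since $P_i \in C_{\gamma'}$ with $\gamma' = 2\gamma/(1+\sqrt{1-4\lambda\gamma})$, so $\ska{P_i R}{R} \geq -\gamma'\|R\|_{\Hi}^2$ and likewise $\ska{RP_i}{R} \geq -\gamma'\|R\|_{\Hi}^2$ — gives
\begin{equation*}
\big(1 - 2\lambda\gamma'\big)\,\|P_1 - P_2\|_{\Hi}^2 \leq \ska{Q_1-Q_2}{P_1-P_2} \leq \|Q_1-Q_2\|_{\Hi}\,\|P_1-P_2\|_{\Hi}.
\end{equation*}
Hence $\|P_1-P_2\|_{\Hi} \leq (1-2\lambda\gamma')^{-1}\|Q_1-Q_2\|_{\Hi}$, and substituting $\gamma' = 2\gamma/(1+\sqrt{1-4\lambda\gamma})$ and simplifying $1 - 2\lambda\gamma' = \big(1+\sqrt{1-4\lambda\gamma} - 4\lambda\gamma\big)\big/\big(1+\sqrt{1-4\lambda\gamma}\big)$ produces precisely the constant in \eqref{eq4:BLip}; one should check this denominator is positive for $\lambda \in (0,\tfrac{1}{4\gamma})$, which it is. The main obstacle is exactly this non-commutativity in the Lipschitz step — everything else is a faithful transcription of the one-dimensional computation — and the resolution is to avoid diagonalization there and instead use the energy/testing argument with the positivity bounds of Lemma~\ref{lemma3:QuadTermNeg}.
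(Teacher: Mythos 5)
Your proposal is correct and follows essentially the same route as the paper's proof: diagonalize $Q$, define $P$ by the maximal scalar root on each eigenline, verify the lower bound and the norm estimate by rationalizing, and prove the Lipschitz bound by testing the difference of the two equations with $P_1-P_2$ and invoking the lower bound on $P_1, P_2$ to control $\ska{P_1^2-P_2^2}{P_1-P_2}$ (your symmetrized four-term splitting with prefactor $\tfrac{\lambda}{2}$ and the paper's two-term splitting $P_1(P_1-P_2)+(P_1-P_2)P_2$ yield the same constant). The only genuine departure is uniqueness: the paper reads it off from the already-established Lipschitz inequality \eqref{eq4:proofBLip}, whereas you argue it directly by simultaneous diagonalization (using that $\widetilde P$ commutes with $Q=\lambda\widetilde P^2+\widetilde P$) and the exclusion of the smaller scalar root; both are valid, though the paper's is shorter and you should state the commutation reason when you invoke it, since self-adjointness alone does not give $\widetilde PQ=Q\widetilde P$.
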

\begin{proof}
  For $Q \in C_{-\gamma}$, we begin by constructing a solution $P$ for
  \eqref{eq3:equationB}. We demonstrate the uniqueness of such an element $P$ at the end of the proof.
  Since $Q \in \Hi$ is self-adjoint and \red{a Hilbert--Schmidt operator, it is
  in particular compact. Thus} there exists an orthonormal system $\seq{e}$
  of eigenvectors and a sequence $\seq{\nu}$ of real eigenvalues such that
  \begin{align*}
  Q  = \sum_{n=1}^{\infty} \nu_n \ska[H]{e_n}{\cdot} e_n \, ,
  \end{align*}
where $\nu_n \ge - \gamma$.
  Then the self-adjoint operator $P$ given by
  \begin{equation}\label{eq3:defP}
  P = \sum_{n=1}^{\infty} \alpha_n \ska[H]{e_n}{\cdot} e_n
  \quad \text{with} \quad
  \alpha_n = \frac{1}{2\lambda} \left(-1 + \sqrt{1 + 4 \lambda  \nu_n}
  \right)
  \end{equation}
  fulfills \eqref{eq3:equationB}, as can be seen by simply inserting $P$
  into \eqref{eq3:equationB}. \red{We will argue that this $P$ is the unique
  solution at the end of the proof.}
  Since $\inf_{n\in \N} \nu_n \geq -\gamma $ and $\lambda
  < \frac{1}{4 \gamma }$ imply that $1 + 4\lambda \nu_n > 0$
  for every $n\in \N$, the numbers $\alpha_n$, $n\in \N$, are indeed real.
  We obtain that the eigenvalues of $P$ satisfy
  \begin{align*}
  \alpha_n
  &= \frac{1}{2\lambda} \left(- 1 + \sqrt{1 + 4\lambda
    \nu_n } \right)\\
  &= \frac{ \left(-1 + \sqrt{1 + 4\lambda
      \nu_n } \right) \left(1 + \sqrt{1+ 4\lambda
      \nu_n} \right)}
  {2\lambda  \left(1 + \sqrt{1 + 4\lambda \nu_n } \right)}\\
  &	= \frac{ 2\nu_n }{1 + \sqrt{1 + 4\lambda \nu_n
  } }\\
  &	\geq \frac{ - 2 \gamma }{1 + \sqrt{1 + 4\lambda \nu_n } }
  \geq \frac{ -2\gamma   }{1 + \sqrt{1 - 4 	\lambda \gamma  } } \, .
  \end{align*}
Thus, $P \in  C_{\frac{-2\gamma   }{1 + \sqrt{1 - 4 \lambda \gamma} } }$ is fulfilled if
  \begin{align}\label{eq4:proofBbound}
  \| P\|_{\Hi}^2
  = \sum_{n=1}^{\infty} \alpha_n^2
  = \sum_{n=1}^{\infty} \left(\frac{ 2\nu_n }{1 + \sqrt{1 +
      4\lambda \nu_n }} \right)^2
    \leq \left(\frac{ 2  }{1 + \sqrt{1 -
      4\lambda \gamma }} 	\right)^2 \|Q\|_{\Hi}^2 < \infty, \,
  \end{align}
  \red{i.e., the Hilbert--Schmidt norm is finite.}
  In the following, we consider $Q_1,Q_2 \in
  C_{-\gamma}$ and choose $P_1,P_2 \in  C_{\frac{ -2\gamma   }{1 +
  \sqrt{1 - 4 \lambda \gamma} } }$ such that
  \begin{align*}
  (\lambda \Bc + I) P_1 =  Q_1
  \quad \text{and} \quad
  (\lambda \Bc + I) P_2 =  Q_2
  \end{align*}
  and obtain that
  \begin{align}
  \notag
  \| Q_1 - Q_2 \|_{\Hi} \|P_1 - P_2\|_{\Hi}
  &= \|(\lambda \Bc + I) P_1 - (\lambda \Bc + I) P_2 \|_{\Hi} \|P_1 -
  P_2\|_{\Hi} \\
  \notag
  &\geq \ska{(\lambda \Bc + I) P_1 - (\lambda \Bc + I) P_2}{P_1 - P_2}\\
  &= \lambda \ska{P_1^2 - P_2^2}{P_1 - P_2} + \ska{P_1 -P_2}{P_1 -	P_2}.
  \label{eq3:lemSum1}
  \end{align}
  Since $P_1,P_2 \geq \frac{-2\gamma }{1 + \sqrt{1 - 4\lambda \gamma } }$,
  the first summand on the right-hand side of \eqref{eq3:lemSum1} can
  be estimated by
  \begin{align*}
  \ska{P_1^2 - P_2^2}{P_1 - P_2}
  &= \ska{P_1 (P_1 - P_2)}{P_1 - P_2} +\ska{(P_1 - P_2) P_2 }{P_1 - P_2} \\
  & \geq \frac{-4\gamma }{1 + \sqrt{1 - 4\lambda
      \gamma} } \|P_1 - P_2 \|_{\Hi}^2.
  \end{align*}
  Altogether, this implies
  \begin{align*}
  \| Q_1 - Q_2 \|_{\Hi} \|P_1 - P_2\|_{\Hi}
  \geq \frac{-4\lambda \gamma }{1 + \sqrt{1 - 4\lambda \gamma} } \|P_1 - P_2
  \|_{\Hi}^2 + \|P_1 - P_2 \|_{\Hi}^2,
  \end{align*}
  as well as
  \begin{align} \label{eq4:proofBLip}
  \| Q_1 - Q_2 \|_{\Hi}
  \geq \frac{ 1 + \sqrt{1 - 4\lambda \gamma } -4\lambda
    \gamma }{1 + \sqrt{1 - 4\lambda \gamma} } \|P_1 - 	P_2\|_{\Hi}.
  \end{align}
  This inequality in mind, it now follows directly that the operator $P$
  defined in  \eqref{eq3:defP} is the unique solution to $(\lambda \Bc + I) P =
  Q$ within the set $C_{ \frac{-2\gamma }{1 + \sqrt{1 - 4\lambda \gamma }
  }}$, since
  \begin{equation*}
  \frac{ 1 + \sqrt{1 - 4\lambda \gamma } -4\lambda
    \gamma }{1 + \sqrt{1 - 4\lambda \gamma} } > 0 \, .
  \end{equation*}
The estimates \eqref{eq4:Bbound} and \eqref{eq4:BLip} are given by \eqref{eq4:proofBbound} and \eqref{eq4:proofBLip}, respectively.
\end{proof}

The previous lemmas in mind, we can now prove the main result of this
section. \red{We prove the existence of a solution to the algebraic Riccati
equation. This equation is not uniquely solvable in $\Hi$ but as mentioned in
Remark~\ref{remark:1dEx}, we consider the maximal solution within the
set $C_{-\gamma}$.}
\begin{satz} \label{satz3:AlgRicNeg}
	Let Assumptions~\ref{as:spaces}.\:and \ref{as:AalgRic}.\:be	satisfied and
	let $\gamma$ with $0 \leq \gamma < \mu_H $ be 	given.
  Then for $Q \in C_{-\gamma^2}$ there exists $P \in \V \cap C_{-\gamma}$
  such that
	\begin{equation*}
	A^*P + PA + P^2 = Q.
	\end{equation*}
\end{satz}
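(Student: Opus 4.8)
The plan is to rewrite the equation as $\Ac_0 P + \Bc P = Q$, with $\Ac_0$ the Lyapunov operator and $\Bc P = P^2$, and to construct the (maximal) solution in $C_{-\gamma}$ as the limit of an iteration assembled from the two partial resolvents $(\lambda\Ac_0 + I)^{-1}$ and $(\lambda\Bc + I)^{-1}$ from Lemmas~\ref{lemma3:ALipPos} and~\ref{lemma3:BbdNegCoefB}. Conceptually, $\Ac_0$ is $m$-accretive on $\Hi$ with $\ska{\Ac_0 P}{P} \geq 2\mu_H\|P\|_{\Hi}^2$ (Lemma~\ref{lemma3:AstarkPos}), whereas by Lemma~\ref{lemma3:QuadTermNeg} the nonlinearity $\Bc$ is only $(-2\gamma)$-accretive on $C_{-\gamma}$; since $\gamma < \mu_H$, the sum $\Ac_0 + \Bc$ is strongly accretive on $C_{-\gamma}$ with modulus $2(\mu_H - \gamma) > 0$, so any solution in $C_{-\gamma}$ is unique and the task reduces to showing that $Q$ lies in the range.

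First the a priori estimate. If $P \in \V \cap C_{-\gamma}$ solves $\Ac_0 P + P^2 = Q$, then $P^2 \in \Hi$ by submultiplicativity of $\|\cdot\|_{\Hi}$, so $\Ac_0 P = Q - P^2 \in \Hi$ and $P \in \dom(\Ac_0)$. Pairing with $P$ in $\Hi$ and using Lemma~\ref{lemma3:AstarkPos} together with $\ska{P^2}{P} \geq -\gamma\|P\|_{\Hi}^2$ (Lemma~\ref{lemma3:QuadTermNeg} with $R = P$) yields
\begin{equation*}
  (2\mu_H - \gamma)\,\|P\|_{\Hi}^2 \;\leq\; \ska{\Ac_0 P}{P} + \ska{P^2}{P} \;=\; \ska{Q}{P} \;\leq\; \|Q\|_{\Hi}\,\|P\|_{\Hi} ,
\end{equation*}
hence $\|P\|_{\Hi} \leq \|Q\|_{\Hi}/(2\mu_H - \gamma)$ since $2\mu_H - \gamma > 0$; and from $\mu_V\|P\|_{\V}^2 \leq \dual{\Ac_0 P}{P} \leq \|\Ac_0 P\|_{\V^*}\|P\|_{\V}$ and $\Hi \incl \V^*$ one also bounds $\|P\|_{\V}$ in terms of $\|Q\|_{\Hi}$. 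Testing the identity on a unit eigenvector $e_n$ of $P$ (those with nonzero eigenvalue lie in $V$, as $P$ maps $H$ into $V$) gives $\alpha_n^2 + 2a_n\alpha_n = q_n$ with $a_n := \dual[V^*\times V]{Ae_n}{e_n} \geq \mu_H$ and $q_n := \ska[H]{Qe_n}{e_n} \geq -\gamma^2$; since $\alpha_n \geq -\gamma > -\mu_H \geq -a_n$, necessarily $\alpha_n = -a_n + \sqrt{a_n^2 + q_n}$, which identifies $P$ as the maximal solution, gives a spectral upper bound $\alpha_n \leq M(\|Q\|_{\Hi},\mu_H)$, and --- using only $q_n \geq -\gamma^2$ and $\gamma \leq \mu_H$, exactly as in Remark~\ref{remark:1dEx} --- again forces $\alpha_n \geq -\gamma$. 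Thus every relevant approximant lies in a fixed subset of $C_{-\gamma}$ bounded in both $\Hi$ and $\V$, which is precompact in $\Hi$ since $\V \stackrel{c}{\incl} \Hi$.

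For existence I would fix a small $\lambda > 0$ and construct $P$ by an iteration $P_{k+1} = \Phi_\lambda(P_k)$ of splitting type for $\Ac_0 + \Bc$ (e.g. a Peaceman-Rachford or Douglas-Rachford scheme, which uses only $(\lambda\Ac_0 + I)^{-1}$ and $(\lambda\Bc + I)^{-1}$ and never $\Ac_0$ or $\Bc$ themselves). Three points have to be checked: (i) the fixed points of $\Phi_\lambda$ are exactly the solutions of $\Ac_0 P + \Bc P = Q$; (ii) the iterates remain in $C_{-\gamma}$, which follows from the cone-contraction properties $(\lambda\Ac_0 + I)^{-1}C_{-\eta} \subseteq C_{-\eta/(1 + 2\lambda\mu_H)}$ and $(\lambda\Bc + I)^{-1}C_{-\eta} \subseteq C_{-2\eta/(1 + \sqrt{1 - 4\lambda\eta})}$ of Lemmas~\ref{lemma3:ALipPos} and~\ref{lemma3:BbdNegCoefB} together with the a priori bound --- note that $\Ac_0\colon\V\to\V^*$ is invertible by the Lax-Milgram lemma and Lemma~\ref{lemma3:AstarkPos}, and $\Ac_0^{-1}$ maps $C_{-\gamma^2}$ into $C_{-\gamma}$, so $\Ac_0^{-1}Q$ is an admissible initialization; (iii) convergence in $\Hi$, which rests on the contraction factor $\leq 1/(1 + 2\lambda\mu_H)$ of Lemma~\ref{lemma3:ALipPos} dominating the expansion encoded by the Lipschitz constant of Lemma~\ref{lemma3:BbdNegCoefB}, a competition won for small $\lambda$ precisely because $\gamma < \mu_H$ --- supplemented, should the contraction fail for large data, by a homotopy argument in $Q$ or a monotone Newton-Kleinman iteration toward the maximal solution using the uniform $\V$-bound. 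Passing to the limit $k \to \infty$ is then routine: $P_k \to P$ in $\Hi$, $P_k \weak P$ in $\V$ along a subsequence, $P_k^2 \to P^2$ in $\Hi$ (submultiplicativity again), and since $\Ac_0\colon\V\to\V^*$ is linear and bounded, hence weakly continuous, one obtains $\Ac_0 P + P^2 = Q$; finally $P \in \V$ and $P \in C_{-\gamma}$ because $C_{-\gamma}$ is closed under $\Hi$-convergence.

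The main obstacle is the semidefiniteness itself: the quadratic term is controlled only through the lower bound $P \geq -\gamma$, so both the accretivity estimate and the contraction estimate degrade, and one must simultaneously use the margin $\mu_H - \gamma > 0$ to beat the negative contribution of $\Bc$ and ensure that no iterate leaves $C_{-\gamma}$. This is where the cone-improvement statements of Lemmas~\ref{lemma3:ALipPos} and~\ref{lemma3:BbdNegCoefB}, and the fact that $C_{-\gamma}$ consists of self-adjoint Hilbert--Schmidt operators --- so that each iterate carries a square-summable sequence of eigenvalues on which the estimates can be verified scalarwise --- are essential. Making the bookkeeping of the various cone constants close under the single, sharp hypothesis $0 \leq \gamma < \mu_H$, together with the comparatively weak assumption $Q \in C_{-\gamma^2}$, is the delicate point of the proof.
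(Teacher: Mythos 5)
Your overall framework coincides with the paper's: both split the equation into the Lyapunov part $\Ac_0$ and the quadratic part $\Bc$, both rely on the resolvent Lemmas~\ref{lemma3:ALipPos} and~\ref{lemma3:BbdNegCoefB} and on the margin $\mu_H-\gamma>0$, and your closing limit passage (uniform $\V$-bound from $\mu_V\|P\|_\V^2\le\dual{\Ac_0 P}{P}$, weak continuity of $\Ac_0:\V\to\V^*$, closedness of $C_{-\gamma}$ in $\Hi$) is exactly the paper's. Your a priori estimate, the scalarwise identification of the maximal branch $\alpha_n=-a_n+\sqrt{a_n^2+q_n}$, and the uniqueness-by-strong-accretivity observation are all correct. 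What is missing is the existence mechanism itself, which is the actual content of the theorem.

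Concretely, two things do not close. First, you never fix the map $\Phi_\lambda$, and for the candidates you name (Peaceman--Rachford or Douglas--Rachford) the iteration applies $(\lambda\Bc+I)^{-1}$ to reflected quantities of the form $2(\lambda\Ac_0+I)^{-1}R-R$; Lemma~\ref{lemma3:BbdNegCoefB} defines that resolvent \emph{only} on $C_{-\gamma}$, and the reflected resolvent of $\Ac_0$ (a Cayley-type transform) does not inherit the cone-improvement of $(\lambda\Ac_0+I)^{-1}$, so your point (ii) is unsupported for these schemes. Second, if the fixed points of $\Phi_\lambda$ are to solve the \emph{original} equation at fixed $\lambda$ (your point (i)), then proving that a fixed point exists is as hard as the theorem itself, and your convergence argument (iii) degenerates into a list of unverified alternatives (``homotopy in $Q$ or Newton--Kleinman''). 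The paper avoids both problems by taking $\Gc(P)=\lambda(\lambda\Ac_0+I)^{-1}Q+(\lambda\Ac_0+I)^{-1}(\lambda\Bc+I)^{-1}P$, a direct composition of the two resolvents, so that $C_{-\gamma}$ is preserved by Lemmas~\ref{lemma3:ALipPos} and~\ref{lemma3:BbdNegCoefB} and the contraction constant is the product $\frac{1}{1+2\lambda\mu_H}\cdot\frac{1+\sqrt{1-4\lambda\gamma}}{1+\sqrt{1-4\lambda\gamma}-4\lambda\gamma}<1$; the price is that the fixed point $P_\lambda$ solves only the Yosida-regularized equation $\Ac_0P_\lambda+\Bc_\lambda P_\lambda=Q$ with $\Bc_\lambda=\Bc(\lambda\Bc+I)^{-1}$, and the substantive analytic work is then the proof that $\{P_\lambda\}$ is Cauchy in $\Hi$ as $\lambda\to0$, using $\|J_\lambda P_\lambda-P_\lambda\|_{\Hi}\le c\,\lambda\|Q\|_{\Hi}^2$ together with the $(-2\gamma)$-monotonicity of $\Bc$ on the cone and, once more, $\gamma<\mu_H$. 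That step — or a worked-out substitute for it — is absent from your proposal, so as it stands the existence claim is not proved.
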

\begin{proof}
	The proof is a generalization of the proof in \cite[Chapter II.3, Lemma
	3.2 and Theorem~3.9]{V.Barbu.1976}.
  \red{The main idea of the proof is to approximate the Riccati equation
  using a Yosida approximation of the quadratic term and to show, via a fixed
  point argument,
  that the approximate equation is uniquely solvable in a suitable set.
  It then remains to prove
  that the limit of the sequence of approximate solutions solves the actual
  equation. Here, it is of importance to make use of the fact that the set
  $C_{-\gamma}$ consists of selfadjoint Hilbert--Schmidt operators that we
  can represent using their eigenvalues and eigenvectors. In particular, this
  offers the possibility to prove that the solution is bounded from below.}

  \red{To prove the existence of a solution to the approximate equation,
  we}
  define the operator $\Gc: C_{-\gamma} \to \Hi$ for $\lambda
  \in(0,\frac{1}{4\gamma})$ if $\gamma >0$, and $\lambda \in(0,\infty)$ if
  $\gamma = 0$, through
	\begin{align*}
	\Gc(P) = \lambda\left( \lambda \Ac_0 + I\right)^{-1} Q + \left( \lambda
	\Ac_0 + I\right)^{-1} \left(\lambda \Bc + I \right)^{-1} P.
	\end{align*}
	In the following, we use the Banach fixed-point theorem to prove the existence of a fixed-point. We begin by proving
	that $\Gc$ maps the closed subset $C_{-\gamma}$ of $\Hi$
	into itself.
For $Q \in C_{-\gamma^2}$ and $P \in C_{-\gamma}$, from Lemma~\ref{lemma3:ALipPos} and Lemma~\ref{lemma3:BbdNegCoefB} we obtain that
	\begin{align*}
	\lambda \left( \lambda \Ac_0 + I \right)^{-1} Q \geq
	\frac{ - \lambda\gamma^2 }{1 + 2\lambda \mu_H},\\
	\left( \lambda \Bc + I \right)^{-1} P
	\geq \frac{ -2\gamma }{1 + \sqrt{1 - 	4\lambda \gamma }},
	\end{align*}
	and
	\begin{align*}
	\left(\lambda \Ac_0 + I \right)^{-1} \left( \lambda \Bc + I \right)^{-1} P
	\geq \frac{1}{ 1 + 2\lambda \mu_H}
	\cdot \frac{ -2\gamma }{1 + \sqrt{1 - 4\lambda \gamma } }.
	\end{align*}
	If $\lambda > 0$ is sufficiently small, then
	\begin{align*}
	\gamma \geq \frac{\lambda\gamma^2 }{1 + 2\lambda \mu_H}  +
	\frac{2\gamma}{(1 + 2\lambda \mu_H) (1+ 	\sqrt{1- 4\lambda
			\gamma}) }.
	\end{align*}
	This yields $\Gc(P) \ge - \gamma$ and thus shows that $\Gc$ maps $C_{-\gamma}$ into itself.

The next step is to prove that $\Gc$ is a contraction on
	$C_{-\gamma}$. Using
	Lemma~\ref{lemma3:ALipPos} and Lemma~\ref{lemma3:BbdNegCoefB}, for
	$P_1,P_2 \in C_{-\gamma}$ it follows that
	\begin{align*}
	&\|\Gc(P_1) - \Gc(P_2) \|_{\Hi} \\
	&\quad= \| \left( \lambda \Ac_0 + I \right)^{-1} \left(\lambda \Bc + I
	\right)^{-1}
	P_1 - \left( \lambda \Ac_0 + I \right)^{-1} \left(\lambda \Bc + I
	\right)^{-1}
	P_2  \|_{\Hi} \\
	&\quad\leq \frac{1}{1+2\lambda\mu_H} \cdot \frac{1 + \sqrt{1 - 4\lambda
			\gamma} }{
		1 + \sqrt{1 - 4\lambda \gamma } -4\lambda	\gamma }  \| P_1 - P_2 \|_{\Hi},
	\end{align*}
	where
	\begin{align*}
	&\frac{1}{1+2\lambda\mu_H}\cdot \frac{1 + \sqrt{1 - 4\lambda \gamma } }{ 1
	+ \sqrt{1 - 4\lambda \gamma } -4 \lambda \gamma }\\
	&\quad=  \frac{ 1 + \sqrt{1 - 4\lambda \gamma } }{ 1 +
		\sqrt{1 - 4\lambda 	\gamma }
		+ 2\lambda( \mu_H+ \mu_H\sqrt{1 - 4 \lambda \gamma } -2\gamma
        - 4\lambda\mu_H\gamma )} < 	1,
	\end{align*}
	since
	\begin{align*}
	& \mu_H+ \mu_H\sqrt{1 - 4 \red{\lambda} \gamma} -2\gamma -4
	\lambda\mu_H\gamma  > 0
	\end{align*}
	for $\lambda>0$ small enough.
So if $\lambda > 0$ is sufficiently small then Banach's fixed-point theorem yields the existence of a unique $P_{\lambda} \in
	C_{-\gamma} \cap \dom(\Ac_0)$ such that $\Gc(P_{\lambda}) = P_{\lambda}$.
It remains to
	prove that $\{P_{\lambda}\}_{\lambda >  0}$ converges strongly in $\Hi$
	as $\lambda \to 0$ to $P \in C_{-\gamma}\cap \V$ and that the limit $P$
	fulfills
	\begin{align*}
	A^*P+ PA + P^2 = Q.
	\end{align*}
	Applying  $\frac{1}{\lambda}( \lambda \Ac_0 + I )$ to both sides of the
	equation $P_\lambda = \Gc(P_\lambda)$  shows that
	\begin{align*}
	\frac{1}{\lambda} \left( \lambda \Ac_0 + I \right) P_{\lambda} = Q +
	\frac{1}{\lambda}\left(\lambda \Bc + I \right)^{-1} P_{\lambda} ,
	\end{align*}
	which can be rearranged as
	\begin{equation} \label{eq3:AlgApproxEqB}
	\Ac_0 P_{\lambda} + \frac{1}{\lambda}\left(I - \left(\lambda \Bc + I
	\right)^{-1}\right) P_{\lambda}  =  Q.
	\end{equation}
	The nonlinearity in this equation is the Yosida approximation for the
	quadratic term of the Riccati equation. To abbreviate this term, we write in the following
	\begin{align*}
	J_{\lambda} = (\lambda \Bc + I)^{-1}
	\quad
	\text{and}
	\quad
	\Bc_{\lambda}  = \frac{1}{\lambda}(I - J_{\lambda}) = \Bc J_{\lambda}.
	\end{align*}
	Testing \eqref{eq3:AlgApproxEqB} with $P_{\lambda}$, we obtain
	\begin{equation} \label{eq3:algApproxB}
	\ska{\Ac_0 P_{\lambda}}{P_{\lambda}} + \ska{\Bc_{\lambda}
		P_{\lambda}}{P_{\lambda}}  =  \ska{Q}{P_{\lambda}}.
	\end{equation}
	For the first summand on the left-hand side, the estimate
	\begin{align*}
	\ska{\Ac_0 P_{\lambda}}{P_{\lambda}} \geq  2\mu_H\|P_{\lambda}\|_{\Hi}^2
	\end{align*}
	is fulfilled. Using Lemma~\ref{lemma3:BbdNegCoefB} for the second
	summand, it follows that
	\begin{align*}
	\ska{\Bc_{\lambda} P_{\lambda}}{P_{\lambda}}
	&= \frac{1}{\lambda}\ska{\left(I
		- \left(\lambda \Bc + I \right)^{-1}\right) P_{\lambda}}{P_{\lambda}}\\
	&\geq \frac{1}{\lambda}\|P_{\lambda}\|_{\Hi}^2 -
	\frac{1}{\lambda}\|\left(\lambda \Bc + I \right)^{-1} P_{\lambda}\|_{\Hi}
	\|P_{\lambda}\|_{\Hi}\\
	&\geq \frac{1}{\lambda}\left(1  - \frac{ 2}{ 1 + \sqrt{1 - 4\lambda \gamma
		}} \right) \|P_{\lambda}\|_{\Hi}^2.
	\end{align*}
  Inserting these estimates into \eqref{eq3:algApproxB} yields
	\begin{align*}
	\left(2	 \mu_H + \frac{1}{\lambda} \left(1  - \frac{ 2}{ 1 + \sqrt{1 -
			4\lambda \gamma }} \right)
	\right)\|P_{\lambda}\|_{\Hi}
	\leq  \|Q\|_{\Hi}.
	\end{align*}
Applying L'H\^{o}pital's rule shows that
	\begin{align*}
	\frac{1}{\lambda} \left(1  - \frac{ 2}{ 1 + \sqrt{1 - 4\lambda \gamma	}}
	\right) \to -\gamma
	\end{align*}
	as $\lambda \to 0$. Hence, for $c_1 \in (0,2\mu_H-\gamma)$ there
	exists $\varepsilon>0$ such that
	\begin{align*}
	 2\mu_H + \frac{1}{\lambda} \left(1  - \frac{ 2}{ 1 + \sqrt{1 -
			4\lambda 	\gamma }} \right) > c_1
	\end{align*}
	is fulfilled if $\lambda \in (0,\varepsilon)$. Thus,
	\begin{equation}\label{eq3:BoundPLambdaB}
	c_1 \|P_{\lambda}\|_{\Hi}  \leq  \|Q\|_{\Hi}
	\end{equation}
	holds for $\lambda>0$ small enough. This proves that $\{
	P_{\lambda}\}_{\lambda >0 }$ is bounded in $\Hi$ for sufficiently small
	$\lambda >0$. It remains to show that $\{ P_{\lambda}\}_{\lambda >0 }$
	converges in $\Hi$ as $\lambda \to 0$.
  {Using \eqref{eq3:BoundPLambdaB} and Lemma~\ref{lemma3:BbdNegCoefB}, we obtain
  \begin{align}
    \begin{split} \label{eq3:JlambdaConvergence}
      \left\|J_{\lambda} P_{\lambda}-P_{\lambda} \right\|_{\Hi}
      &=\lambda \left\|\frac{1}{\lambda}(I - J_{\lambda}) P_{\lambda}
      \right\|_{\Hi}
      = \lambda \| \Bc( \lambda\Bc + I)^{-1}P_{\lambda} \|_{\Hi}
      \leq \lambda \| (\lambda\Bc + I)^{-1}P_{\lambda} \|_{\Hi}^2\\
      &\leq \frac{4\lambda}{ \left(1+ \sqrt{1-4\lambda\gamma}
        \right)^2} 	\|P_{\lambda}\|_{\Hi}^2
      \leq \frac{4\lambda }{ \left( 1+ \sqrt{1-4\lambda\gamma}
        \right)^2} 	\frac{\|Q\|_{\Hi}^2}{c^2_1}
      \leq c \lambda \|Q\|_{\Hi}^2
    \end{split}
  \end{align}
  for a certain constant $c>0$ if $\lambda>0$ is sufficiently small.
  Now we show the Cauchy property of $\{P_\lambda\}_{\lambda > 0}$.
  As $P_{\lambda}, P_{\nu} \geq -\gamma$ for sufficiently small $\lambda, \nu > 0$, using Lemma~\ref{lemma3:BbdNegCoefB}, it follows that
  \begin{align*}
 J_\lambda P_\lambda  =
    (\lambda \Bc + I)^{-1}P_{\lambda}
    \geq  \frac{ - 2\gamma}{1 + \sqrt{1 - 4\lambda \gamma } }
    \quad \text{and} \quad
    J_\nu P_\nu =
    (\nu \Bc + I)^{-1}P_{\nu}
    \geq  \frac{ - 2\gamma }{ 1 + \sqrt{1 - 4 \nu \gamma }}.
  \end{align*}
  Exploiting these lower bounds and assuming without loss of generality that $\nu  <\lambda$, we obtain that
  \begin{align*}
    &\ska{\Bc_{\lambda} P_{\lambda} - \Bc_{\nu} P_{\nu}}{J_{\lambda}
      P_{\lambda}- J_{\nu}P_{\nu}}\\
    &= \ska{\Bc J_{\lambda}P_{\lambda} - \Bc J_{\nu}P_{\nu}}{J_{\lambda}
      P_{\lambda}- J_{\nu}P_{\nu}}\\
    &\ge \frac{ -4\gamma }{ 1 + \sqrt{1 - 4\lambda \gamma } }
    \|J_{\lambda} P_{\lambda}- J_{\nu} P_{\nu}\|_{\Hi}^2.
  \end{align*}
  Subtracting the equations \eqref{eq3:AlgApproxEqB} for $P_{\lambda}$
  and $P_{\nu}$ and testing with $P_{\lambda} - P_{\nu}$, it follows that
  \begin{align*}
  2\mu_H	\|P_{\lambda} - P_{\nu} \|_{\Hi}^2 \leq - \ska{\Bc_{\lambda}
    P_{\lambda} - \Bc_{\nu}P_{\nu} }{P_{\lambda} - P_{\nu}} .
  \end{align*}
  Altogether this yields
  \begin{align*}
    & \ska{\Bc_{\lambda} P_{\lambda} - \Bc_{\nu} P_{\nu}}{(J_{\lambda}
      P_{\lambda} - P_{\lambda})- (J_{\nu}P_{\nu} - P_{\nu})}\\
    &=\ska{\Bc_{\lambda} P_{\lambda} - \Bc_{\nu} P_{\nu}}{J_{\lambda}
      P_{\lambda}- J_{\nu}P_{\nu}}
    - \ska{\Bc_{\lambda}
      P_{\lambda} - \Bc_{\nu}P_{\nu} }{P_{\lambda} - P_{\nu}} \\
    &\geq 2\mu_H	\|P_{\lambda} - P_{\nu} \|_{\Hi}^2
    - \frac{ 4\gamma }{ 1 + \sqrt{1 - 4\lambda \gamma } }
    \|J_{\lambda} P_{\lambda}- J_{\nu} P_{\nu}\|_{\Hi}^2\\
    & \geq 2\mu_H	\|P_{\lambda} - P_{\nu} \|_{\Hi}^2
    - \frac{ 4\gamma }{ 1 + \sqrt{1 - 4\lambda \gamma } }
    \big( \|J_{\lambda} P_{\lambda}- P_{\lambda}\|_{\Hi}
    + \| P_{\lambda}- P_{\nu}\|_{\Hi}
    + \| P_{\nu}- J_{\nu} P_{\nu}\|_{\Hi} \big)^2\\
    & = \Big( 2\mu_H - \frac{ 4\gamma }{ 1 + \sqrt{1 - 4\lambda \gamma }
    }\Big)	\|P_{\lambda} - P_{\nu} \|_{\Hi}^2\\
    &\quad- \frac{ 4\gamma }{ 1 + \sqrt{1 - 4\lambda \gamma } }
    \big( \|J_{\lambda} P_{\lambda}- P_{\lambda}\|_{\Hi}^2
    + \| P_{\nu}- J_{\nu} P_{\nu}\|_{\Hi}^2
    + \|J_{\lambda} P_{\lambda}- P_{\lambda}\|_{\Hi} \| P_{\lambda}-
    P_{\nu}\|_{\Hi}\\
    &\quad \phantom{- \frac{ 4\gamma }{ 1 + \sqrt{1 - 4\lambda \gamma }
    }\big( \| }+
    \| P_{\lambda}- P_{\nu}\|_{\Hi} \| P_{\nu}- J_{\nu} P_{\nu}\|_{\Hi}
    + \|J_{\lambda} P_{\lambda}- P_{\lambda}\|_{\Hi} \| P_{\nu}- J_{\nu}
    P_{\nu}\|_{\Hi} \big).
  \end{align*}
  On the other hand, we have the upper bound
  \begin{align*}
    & \ska{\Bc_{\lambda} P_{\lambda} - \Bc_{\nu} P_{\nu}}{(J_{\lambda}
      P_{\lambda} - P_{\lambda})- (J_{\nu}P_{\nu} - P_{\nu})}\\
    & \leq \| \Bc_{\lambda} P_{\lambda} - \Bc_{\nu} P_{\nu} \|_{\Hi}
      \big( \| J_{\lambda} P_{\lambda} - P_{\lambda} \|_{\Hi}
      + \| J_{\nu}P_{\nu} - P_{\nu} \|_{\Hi}  \big).
  \end{align*}
  Both together imply that
  \begin{align*}
    &\Big( 2\mu_H - \frac{ 4\gamma }{ 1 + \sqrt{1 - 4\lambda \gamma }
    }\Big)	\|P_{\lambda} - P_{\nu} \|_{\Hi}^2\\
    & \leq \| \Bc_{\lambda} P_{\lambda} - \Bc_{\nu} P_{\nu} \|_{\Hi}
    \big( \| J_{\lambda} P_{\lambda} - P_{\lambda} \|_{\Hi}
    + \| J_{\nu}P_{\nu} - P_{\nu} \|_{\Hi}  \big) \\
    &\quad + \frac{ 4\gamma }{ 1 + \sqrt{1 - 4\lambda \gamma } }
    \big( \|J_{\lambda} P_{\lambda}- P_{\lambda}\|_{\Hi}^2
    + \| P_{\nu}- J_{\nu} P_{\nu}\|_{\Hi}^2
    + \|J_{\lambda} P_{\lambda}- P_{\lambda}\|_{\Hi} \| P_{\lambda}-
    P_{\nu}\|_{\Hi}\\
    &\quad \phantom{- \frac{ 4\gamma }{ 1 + \sqrt{1 - 4\lambda \gamma }
      }\big( \| }+
    \| P_{\lambda}- P_{\nu}\|_{\Hi} \| P_{\nu}- J_{\nu} P_{\nu}\|_{\Hi}
    + \|J_{\lambda} P_{\lambda}- P_{\lambda}\|_{\Hi} \| P_{\nu}- J_{\nu}
    P_{\nu}\|_{\Hi} \big).
  \end{align*}
  Since $ \gamma < \mu_H $, for $c_2 \in (2\gamma,
  2\mu_H)$, we have
  \begin{align*}
    2\mu_H - \frac{ 4\gamma }{ 1 + \sqrt{1 - 4\lambda \gamma }
    } > c_2
  \end{align*}
  if $\lambda >0$ sufficiently small.
 Employing \eqref{eq3:BoundPLambdaB} and
  \eqref{eq3:JlambdaConvergence}, this proves
  \begin{align*}
    c_2 \|P_{\lambda} - P_{\nu} \|_{\Hi}^2 \to 0 \quad \text{as } \lambda, \nu \to
    0.
  \end{align*}
	Thus, there exists $P \in \Hi$ such that  $P_{\lambda} \to P$ in $\Hi$
	as $\lambda \to 0$. Since  $C_{-\gamma}$ is closed, we find that $ P\in C_{-\gamma}$.}

	It remains to prove that $A^*P + PA +P^2 = Q$ is satisfied and $P \in \V$.
	Because of \eqref{eq3:JlambdaConvergence}, it follows that
	\begin{align*}
	J_{\lambda}  P_{\lambda} \to P  \quad \text{ in } \Hi \text{ as } \lambda \to 0.
	\end{align*}
	Since $\Bc$ is continuous, we obtain that
	\begin{align*}
	\Bc_{\lambda} (P_{\lambda}) = \Bc J_{\lambda} (P_{\lambda}) \to \Bc P =
	P^2 \quad \text{ in } \Hi \text{ as } \lambda \to 0.
	\end{align*}
	For the linear part, it can be concluded that
	\begin{align*}
	\Ac_0 P_{\lambda} = Q - \Bc_{\lambda} (P_{\lambda}) \to Q - P^2 \quad
	\text{ in } \Hi \text{ as } \lambda \to 0.
	\end{align*}
Since
	\begin{align*}
	\mu_V \|P_{\lambda} \|_{\V}^2 \leq \ska{\Ac_0 P_{\lambda}}{P_{\lambda}}
	\leq \|\Ac_0P_{\lambda} \|_{\Hi} \|P_{\lambda}\|_{\Hi},
	\end{align*}
and since $\{ \Ac_0P_{\lambda} \}_{\lambda >0 }$ and $\{ P_{\lambda}\}_{\lambda
	>0 }$ are convergent and thus bounded in $\Hi$, we see that
$ \{ P_{\lambda}\}_{\lambda >0 }$ is also bounded in $\V$. Therefore, there
	exists a weakly convergent subsequence in $\V$. The uniqueness of the
	limit implies that this limit has to be $P$ and that $P$ is an element of
	$\V$. Since $\Ac_0 :\V \to \V^*$ is a linear and bounded operator, it is weakly-weakly continuous, see
	\cite[Theorem~3.10]{H.Brezis.2010}. Therefore, $\{ \Ac_0P_{\lambda}
	\}_{\lambda >0 }$ converges weakly  to $\Ac_0 P$ in $\V^*$ and the
	convergence of $\{ \Ac_0P_{\lambda} \}_{\lambda >0 }$ in $\Hi$ implies that
	\begin{align*}
	\Ac_0 P_{\lambda} \to \Ac_0 P \quad \text{ in } \V^* \text{ as } \lambda \to 0
	\end{align*}
	which yields that $\Ac_0 P + \Bc P = Q$.
\end{proof}

\section{Weak solution of the Riccati equation}
\label{section:varSol}

In this section, we consider the Hilbert spaces $V$ and $H$ as stated in
Assumption~\ref{as:spaces}. Further, we introduce the following mappings.

\begin{as} \label{as:OpAB}
	Let $a : [0, T ] \times V \times V \to \R$ be given such that for every $t
	\in [0, T ]$ the form $a(t; \cdot, \cdot) : V
	\times 	V \to	\R$ is bilinear. There exist constants $\mu, \eta > 0$
	such that for every $u, v \in V$ and every $t \in [0,T]$
	\begin{alignat*}{2}
	a(t; u, u) &\geq \mu \|u\|^2_{V},
	\quad
	&&\text{(i.e. $a$ is \emph{uniformly strongly positive}),}\\
	|a(t; u, v)| &\leq \eta \|u\|_{V} \|v\|_{V},
  \quad
  &&\text{(i.e. $a$ is \emph{uniformly bounded})}.
	\end{alignat*}
	Further, let $a(\cdot; u, v): [0,T] \to \R$ be Lebesgue-measurable for fixed
	$u,	v \in V$.
\end{as}

This assumption in mind, for every $t \in[0, T ]$ we introduce the operators
$\A(t), \A^*(t) : V \to
V^*$  given by
\begin{equation*}
\dual[V^*\times V]{\A(t)u}{v}= a(t; u, v),
\quad \text{ and } \quad
\dual[V^*\times V]{\A^*(t)u}{v}= a(t; v, u),
\end{equation*}
with $u, v \in V$. Note that $\A^*(t)$ is the dual operator of $\A(t)$ for
every $t \in [0,T]$.
\red{In our setting, we need to assume that $\A(t)$, $t
  \in [0,T]$, is linear and strongly positive. This rather restrictive assumption
  is necessary to allow semidefinite data but can be fulfilled, for example, by
  elliptic differential operators.}

We consider the initial value problem \eqref{eq0:InitialRic} for the Riccati differential equation
and we use the backward Euler scheme
to obtain a time discretization. The resulting semi-discrete problem can be
solved using the existence result for algebraic Riccati equations from
Section~\ref{section:algRic}.\:This approach will lead to the following
result, which we then prove in detail.

\begin{satz} \label{satz4:variationelLoesung}
	Let Assumptions~\ref{as:spaces}.\:and \ref{as:OpAB}.\:be
	satisfied.
	For $\gamma$ with $\red{0\le \gamma < \frac{ \mu }{C_{V,H}^2}}$, $\Qc \in
	L^{1}(0,T;\Hi) +	L^{2}(0,T;\V^*)$ with
	$\Qc(t)=\Qc^*(t) \in \Hi$ and $\Qc(t)\geq -\gamma^2  $ for
	almost every $t\in [0,T]$ as well as $\Pc_0 \in \Hi$ with $\Pc_0=\Pc_0^*$ and
	$\Pc_0\geq - \gamma $, there exists a weak solution $\Pc \in
	\W$	to the initial value problem \eqref{eq0:InitialRic} \red{that fulfills $\Pc(t)
	\geq - \gamma$ for every $t \in [0,T]$. This solution is unique.}
\end{satz}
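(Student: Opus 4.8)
The plan is to construct the weak solution as a limit of the backward Euler iterates. For $N \in \N$ set $\tau = T/N$, $t_n = n\tau$, and define data $A_n = \frac{1}{\tau}\int_{t_{n-1}}^{t_n} \A(t)\,dt$ (or $\A(t_n)$), and $Q_n$ as a corresponding average of $\Qc$. Writing the backward Euler scheme in the rearranged form
\begin{align*}
\Big(A_n + \tfrac{1}{2\tau}I\Big)^* P_n + P_n\Big(A_n + \tfrac{1}{2\tau}I\Big) + P_n^2 = Q_n + \tfrac{1}{\tau}P_{n-1},\quad P_0 = \Pc_0,
\end{align*}
I would apply Theorem~\ref{satz3:AlgRicNeg} inductively to solve for each $P_n$. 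The operator $A_n + \frac{1}{2\tau}I$ is strongly positive on $V$ with constant $\mu_V^{(n)} \ge \mu$, and using Remark~\ref{remark:muH} its $H$-coercivity constant $\mu_H^{(n)}$ is at least $\frac{\mu}{C_{V,H}^2} + \frac{1}{2\tau}$ (this is exactly the situation announced in Remark~\ref{remark:muH} where $\mu_H$ is much larger than $\mu_V/C_{V,H}^2$). The key bookkeeping step is to verify that, along the induction, the right-hand side $Q_n + \frac{1}{\tau}P_{n-1}$ lies in $C_{-\gamma_n^2}$ for a suitable $\gamma_n$ and that the resulting lower bound $P_n \ge -\gamma_n$ does not deteriorate: one wants a Gronwall-type recursion $\gamma_n \le (1 + c\tau)\gamma_{n-1} + c\tau\|\text{data}\|$ keeping $\gamma_n < \mu_H^{(n)}$, so that the hypothesis $\gamma < \mu/C_{V,H}^2$ is precisely what makes this closeable uniformly in $\tau$. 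This is where the condition $\gamma \ge 0$ and the semidefinite structure are used essentially.

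Next I would derive a priori bounds on the piecewise-constant and piecewise-linear interpolants $\overline P_\tau$, $\widehat P_\tau$ built from $\{P_n\}$. Testing the discrete equation with $P_n$ and summing, using Lemma~\ref{lemma3:AstarkPos} for the coercivity of $\Ac_0^{(n)}$ on $\V$, Lemma~\ref{lemma3:QuadTermNeg} to control $\dual{\Pc^2}{\Pc}$ from below by $-\gamma_n C_{V,H}^2\|P_n\|_\V^2$ (which is dominated by the coercivity since $\gamma_n C_{V,H}^2 < \mu$), and the discrete integration-by-parts identity $\sum (P_n - P_{n-1}, P_n) \ge \frac{1}{2}(\|P_N\|_\Hi^2 - \|\Pc_0\|_\Hi^2)$, one gets bounds for $\overline P_\tau$ in $L^2(0,T;\V) \cap L^\infty(0,T;\Hi)$ and, by comparison in the equation, for the discrete time-difference quotients in $L^1(0,T;\Hi) + L^2(0,T;\V^*)$ — i.e. uniform boundedness in $\W$. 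Here I would lean on the Tartar framework cited in the paper for the structure of $L^1(0,T;\Hi)+L^2(0,T;\V^*)$ and for the fact that $\W \incl C([0,T];\Hi)$.

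Then comes the passage to the limit. Using the bound in $\W$ and the compact embedding $\V \incl \Hi \incl \V^*$ together with an Aubin–Lions/Simon-type compactness argument, extract a subsequence with $\overline P_\tau \weak \Pc$ in $L^2(0,T;\V)$ and $\overline P_\tau \to \Pc$ strongly in $L^2(0,T;\Hi)$; the strong convergence is what lets me pass to the limit in the quadratic term $\Pc^2$ (since $P \mapsto P^2$ is continuous on $\Hi$ and the $\Hi$-norm is submultiplicative, one controls $\|\overline P_\tau^2 - \Pc^2\|_{L^1(0,T;\Hi)}$). The linear terms pass by weak continuity of $\Ac_0$. One also checks the consistency of $Q_n \to \Qc$ in $L^1(0,T;\Hi)+L^2(0,T;\V^*)$ and recovers the initial condition $\Pc(0) = \Pc_0$ in $\Hi$ from the $C([0,T];\Hi)$ embedding. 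The pointwise lower bound $\Pc(t) \ge -\gamma$ passes to the limit because $C_{-\gamma}$ is closed in $\Hi$ and $\overline P_\tau(t) \in C_{-\gamma_{n(t)}}$ with $\gamma_{n(t)} \to \gamma$ uniformly as $\tau\to 0$ (granted the Gronwall estimate above is sharp enough to give $\gamma_n \to \gamma$, not just $\le$ something bounded — if not, one gets $\Pc(t) \ge -\gamma'$ for a slightly larger $\gamma'$, and a separate argument, e.g. testing the limit equation against $(\Pc(t)+\gamma I)_-$ type projections, upgrades it; I expect the authors avoid this by a careful choice of discretization of $\Qc$).

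**Main obstacle.** The hard part will be the uniform-in-$\tau$ control of the lower bound, i.e. showing that the constants $\gamma_n$ produced by iterating Theorem~\ref{satz3:AlgRicNeg} stay below the coercivity threshold $\mu_H^{(n)}$ and converge to $\gamma$; this is the place where the hypothesis $\gamma < \mu/C_{V,H}^2$ is exactly calibrated, and getting a clean Gronwall recursion out of the combination of Lemma~\ref{lemma3:ALipPos} and Lemma~\ref{lemma3:BbdNegCoefB} (both of which contract the lower bound by factors like $(1+2\lambda\mu_H)^{-1}$ and $2/(1+\sqrt{1-4\lambda\gamma})$ with $\lambda = \tau$) requires care. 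A secondary difficulty is uniqueness: since the algebraic equation has multiple solutions, uniqueness of the weak solution must use the lower bound $\Pc(t)\ge-\gamma$ in an essential way — I would subtract two such solutions, test with the difference, and use that the quadratic term $\dual{\Pc_1^2 - \Pc_2^2}{\Pc_1-\Pc_2} \ge -2\gamma C_{V,H}^2\|\Pc_1-\Pc_2\|_\V^2$ (from Lemma~\ref{lemma3:QuadTermNeg}, both operators being $\ge -\gamma$) is dominated by the coercivity $\mu_V\|\Pc_1-\Pc_2\|_\V^2$, then conclude via Gronwall in $\Hi$ after a further integration by parts in time.
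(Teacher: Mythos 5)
Your overall architecture matches the paper's: backward Euler, rewritten as the algebraic Riccati equation with shifted operator $A_n+\frac{1}{2\tau}I$ and right-hand side $Q_n+\frac{1}{\tau}P_{n-1}$, solved inductively via Theorem~\ref{satz3:AlgRicNeg}, followed by a priori estimates and a limit passage, with uniqueness by testing the difference of two solutions and Gronwall (your uniqueness argument is essentially the paper's). However, the step you yourself flag as the main obstacle is left genuinely unresolved, and the mechanism you propose for it would not prove the theorem as stated. You expect the lower bounds to satisfy a deteriorating recursion $\gamma_n \le (1+c\tau)\gamma_{n-1}+c\tau\|\text{data}\|$ and hope it "closes". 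If the bound really degraded multiplicatively per step, then after $N=T/\tau$ steps you would only get $P_n \ge -e^{cT}\gamma$, and to keep this below the coercivity threshold you would need $\gamma < e^{-cT}\mu/C_{V,H}^2$ --- a strictly stronger hypothesis than the theorem's $\gamma < \mu/C_{V,H}^2$, and your fallback (accepting a larger $\gamma'$ and "upgrading" afterwards) does not repair this, since the a priori estimate itself needs $\gamma' C_{V,H}^2 < \mu$ to absorb the quadratic term. The paper's Lemma~\ref{lemma4:Apriori} shows instead that the cone $C_{-\gamma}$ is \emph{exactly invariant} under one implicit Euler step: diagonalizing $P_n$ and writing its eigenvalues as the larger root of the scalar quadratic, one gets
\begin{equation*}
\alpha_i \;=\; \frac{q_i+\tfrac{1}{\tau}p_i}{\mathbf{a}_i+\tfrac{1}{2\tau}+\sqrt{\bigl(\mathbf{a}_i+\tfrac{1}{2\tau}\bigr)^2+q_i+\tfrac{1}{\tau}p_i}}
\;\ge\; \frac{-\gamma\bigl(\gamma+\tfrac{1}{\tau}\bigr)}{\tfrac{\mu}{C_{V,H}^2}+\tfrac{1}{\tau}} \;\ge\; -\gamma,
\end{equation*}
using $q_i\ge-\gamma^2$, $p_i\ge-\gamma$, $\mathbf{a}_i\ge\mu/C_{V,H}^2>\gamma$. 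This exact invariance (no growth in $n$, uniformly in $\tau$) is the heart of the argument and is precisely where the calibration $\Qc\ge-\gamma^2$, $\Pc_0\ge-\gamma$, $\gamma<\mu/C_{V,H}^2$ is used; without it your plan does not deliver the stated threshold.

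Two further, smaller divergences. First, for the nonlinearity you propose Aubin--Lions compactness and strong $L^2(0,T;\Hi)$ convergence of $\Pc_\tau$; the paper instead uses Mazur's lemma (to transport the lower bound to the limit) plus the Minty trick on the cone $C_{-\gamma}$, exploiting that $\Ac_\tau$ is semimonotone there by Lemma~\ref{lemma3:QuadTermNeg}. Your route is viable in principle (the embedding $\V\incl\Hi$ is compact and the $\Hi$-norm is submultiplicative), but it needs a discrete Aubin--Lions argument for the piecewise constant interpolant and still needs Mazur or a similar device to get $\Pc(t)\ge-\gamma$ in the limit, which your Minty-free plan does not supply independently. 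Second, you attempt the general right-hand side $\Qc\in L^1(0,T;\Hi)+L^2(0,T;\V^*)$ in a single discretization pass; the paper's a priori estimate uses $\|\Qc\|_{L^2(0,T;\V^*)}$ and therefore first proves the result for $\Qc\in L^2(0,T;\Hi)$ and then passes to general $\Qc$ by approximating the data and showing the corresponding solutions form a Cauchy sequence in $C([0,T];\Hi)\cap L^2(0,T;\V)$. Your one-pass version would require a different treatment of the $L^1(0,T;\Hi)$ part in the discrete energy estimate, which you do not spell out.
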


Again, let us remark that we construct the maximal solution.

\subsection{Time discrete problem}

For the time discretization, we consider the equidistant partition $0 =t_0 <
\dots < t_N = T$ with $\tau = \frac{T}{N}$ and $t_n = n\tau$ ($
n=0,1,\dots,N$). We always assume that $\tau$ is sufficiently small such that $\tau < \frac{\mu}{2C_{V,H}^2}$.
We consider
the semi-discrete problem
\begin{equation} \label{eq4:RiccSemi}
\frac{P_n-P_{n-1}}{\tau} + \Ac_n P_{n} = Q_n,\quad n=1, 2, \dots, N,
\end{equation}
with $P_0 = \Pc_0$, where $P_n \in \V$ ($n=1, 2, \dots , N$) denotes an approximation of $\Pc(t_n)$.
Here the right-hand side is given by $Q_n
= \frac{1}{\tau} \int_{t_{n-1}}^{t_n} \Qc(t) dt \in \Hi$. Moreover, we define
$\Ac_n P_n = A_n^*P_n + P_nA_n + P_n^2 \in \V^*$ with
\begin{align*}
A_n = \frac{1}{\tau} \int_{t_{n-1}}^{t_n} \A(t) dt
\end{align*}
for $n=1,2,\dots, N$.
In order to show the existence of a solution $(P_n)_{n=1}^N$ to the
semi-discrete problem \eqref{eq4:RiccSemi}, we make use of Theorem~\ref{satz3:AlgRicNeg}.
We first consider a somewhat more regular right-hand side $\Qc \in
L^2(0,T;\Hi)$.

\begin{lemma} \label{lemma4:Apriori}
	Let Assumptions~\ref{as:spaces}.\:and \ref{as:OpAB}.\:be
	satisfied and let $\tau < \frac{\mu}{2C_{V,H}^2}$.
	For $\gamma$ with $0\le \gamma<  \frac{\mu }{C_{V,H}^2}$, $\Qc \in
	L^{2}(0,T;\Hi)$ with
	$\Qc(t)=\Qc^*(t)$,  $\Qc(t)\geq - \gamma^2$ for
	almost every $t\in [0,T]$,  $\Pc_0 \in \Hi$ with $\Pc_0=\Pc_0^*$, and
	$\Pc_0\geq - \gamma$,
	the semi-discrete problem \eqref{eq4:RiccSemi} admits a solution
	$(P_n)_{n=1}^N$ such that $P_n \in \V$ with $P_n \geq	-\gamma$, $n=1, 2, \dots,N$, that
	fulfills the a priori estimate

	\begin{align} \label{eq4:aPrioriEst}
	\begin{split}
  &\|P_n \|^2_{\Hi} +\sum_{k=1}^{n} \|P_k - P_{k-1}\|^2_{\Hi}
	+ \left( \mu - 	\frac{\gamma C_{V,H}^2 }{2}\right)
	\tau\sum_{k=1}^{n} \|P_k\|^2_{\V} \\
	&\quad \leq   \|\Pc_{0}\|^2_{\Hi}
	+\frac{1}{\mu - \frac{\gamma C_{V,H}^2 }{2}}\|\Qc\|_{L^2(0,T;\V^*)}^2, \quad
	n = 1, 2, \dots, N.
	\end{split}
	\end{align}
\end{lemma}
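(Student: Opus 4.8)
The plan is to run the recursion \eqref{eq4:RiccSemi} forward in $n$, solving each step as an algebraic operator Riccati equation covered by Theorem~\ref{satz3:AlgRicNeg}, and then to obtain \eqref{eq4:aPrioriEst} by testing the recursion with $P_n$. \textbf{Reformulation.} Because $\bigl(\tfrac1{2\tau}I\bigr)^{*}=\tfrac1{2\tau}I$, the $n$-th equation of \eqref{eq4:RiccSemi} is equivalent to
\[
\hat A_n^{*}P_n+P_n\hat A_n+P_n^{2}=\tilde Q_n,\qquad \hat A_n:=A_n+\tfrac1{2\tau}I,\quad \tilde Q_n:=Q_n+\tfrac1\tau P_{n-1}.
\]
Here $\hat A_n\in\Lin(V,V^{*})$, and since by Assumption~\ref{as:OpAB} and \eqref{eq2:normHV} one has $\dual[V^*\times V]{\hat A_n u}{u}=\tfrac1\tau\int_{t_{n-1}}^{t_n}a(t;u,u)\,dt+\tfrac1{2\tau}\|u\|_H^{2}\ge\mu\|u\|_V^{2}+\tfrac1{2\tau}\|u\|_H^{2}$, the operator $\hat A_n$ satisfies Assumption~\ref{as:AalgRic} with $\mu_V=\mu$ and obeys \eqref{eq3:AstronglyPosH} with the constant $\mu_H=\tfrac{\mu}{C_{V,H}^{2}}+\tfrac1{2\tau}$, which is exactly the enlargement of $\mu_H$ announced in Remark~\ref{remark:muH}.

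\textbf{Existence.} I would prove by induction on $n$ that there is $P_n\in\Hi$ (and $P_n\in\V$ for $n\ge1$) which is self-adjoint and satisfies $P_n\ge-\gamma$; for $n=0$ this is the hypothesis on $\Pc_0$. Given the statement for $n-1$, the average $Q_n=\tfrac1\tau\int_{t_{n-1}}^{t_n}\Qc(t)\,dt\in\Hi$ is self-adjoint with $Q_n\ge-\gamma^{2}$ (the form bound passing to the average), so $\tilde Q_n\in C_{-\delta^{2}}$ with $\delta:=\sqrt{\gamma/\tau+\gamma^{2}}$; a short estimate using $\gamma<\mu/C_{V,H}^{2}$ gives $\delta<\tfrac{\mu}{C_{V,H}^{2}}+\tfrac1{2\tau}=\mu_H$, so Theorem~\ref{satz3:AlgRicNeg} applied to $\hat A_n$ with its parameter equal to $\delta$ yields $P_n\in\V\cap C_{-\delta}$ solving the reformulated equation, hence \eqref{eq4:RiccSemi}.

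\textbf{Upgrading the lower bound.} The delicate point — and the one I expect to be the main obstacle — is to improve $P_n\ge-\delta$ to the sharp bound $P_n\ge-\gamma$. Since $P_n\in\V\subseteq\Hi$ is self-adjoint, it is compact, so it suffices to show $\beta\ge-\gamma$ for each eigenvalue $\beta$; this is trivial for $\beta=0$, and for $\beta\neq0$ the corresponding unit eigenvector $v=\beta^{-1}P_n v$ lies in $\ran P_n\subseteq V$ as $P_n\in\Hs(H,V)$. Applying the reformulated equation to $v$ and pairing with $v$, and using (via the Gelfand triple identifications) that $\dual[V^*\times V]{\hat A_n^{*}P_n v}{v}=\dual[V^*\times V]{P_n\hat A_n v}{v}=\beta\,\theta$ with $\theta:=\dual[V^*\times V]{\hat A_n v}{v}\ge\tfrac{\mu}{C_{V,H}^{2}}+\tfrac1{2\tau}$, $\ska[H]{P_n^{2}v}{v}=\beta^{2}$ and $\ska[H]{\tilde Q_n v}{v}\ge-\gamma/\tau-\gamma^{2}$ (from $P_{n-1}\ge-\gamma$ and $Q_n\ge-\gamma^{2}$), one arrives at the quadratic inequality $\beta^{2}+2\theta\beta+\gamma/\tau+\gamma^{2}\ge0$. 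Its discriminant is positive, and the smaller root $-\theta-\sqrt{\theta^{2}-\gamma/\tau-\gamma^{2}}$ lies strictly below $-\delta$, hence is excluded by $P_n\ge-\delta$; therefore $\beta\ge-\theta+\sqrt{\theta^{2}-\gamma/\tau-\gamma^{2}}$, and this is $\ge-\gamma$ because $\sqrt{\theta^{2}-\gamma/\tau-\gamma^{2}}\ge\theta-\gamma$ (square it, which is legitimate since $\theta-\gamma>0$, and simplify using $\theta\ge\tfrac{\mu}{C_{V,H}^{2}}+\tfrac1{2\tau}$ and $\gamma\le\mu/C_{V,H}^{2}$). This closes the induction and produces $(P_n)_{n=1}^{N}$ with $P_n\in\V$, $P_n=P_n^{*}$, $P_n\ge-\gamma$.

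\textbf{A priori estimate.} Finally I would pair \eqref{eq4:RiccSemi} with $P_n$ in the pairing $\dual{\cdot}{\cdot}$. The difference quotient contributes $\tfrac1{2\tau}\bigl(\|P_n\|_{\Hi}^{2}-\|P_{n-1}\|_{\Hi}^{2}+\|P_n-P_{n-1}\|_{\Hi}^{2}\bigr)$ by polarisation; the linear term is $\ge\mu\|P_n\|_{\V}^{2}$ by Lemma~\ref{lemma3:AstarkPos} applied to $A_n$; the quadratic term equals $\tr(P_n^{3})=\sum_k\beta_k^{3}\ge-\gamma\sum_k\beta_k^{2}=-\gamma\|P_n\|_{\Hi}^{2}\ge-\tfrac{\gamma C_{V,H}^{2}}{2}\|P_n\|_{\V}^{2}$, using $P_n\ge-\gamma$ and $\|P_n\|_{\Hi}\le\tfrac{C_{V,H}}{\sqrt2}\|P_n\|_{\V}$; and the right-hand side is $\dual{Q_n}{P_n}\le\|Q_n\|_{\V^{*}}\|P_n\|_{\V}\le\tfrac1{2c}\|Q_n\|_{\V^{*}}^{2}+\tfrac c2\|P_n\|_{\V}^{2}$ with $c:=\mu-\tfrac{\gamma C_{V,H}^{2}}{2}>0$. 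Collecting terms, multiplying by $2\tau$, summing over $n$ (the $\|P_n\|_{\Hi}^{2}$ terms telescoping, with $P_0=\Pc_0$) and estimating $\tau\sum_{k}\|Q_k\|_{\V^{*}}^{2}\le\sum_{k}\int_{t_{k-1}}^{t_k}\|\Qc(t)\|_{\V^{*}}^{2}\,dt\le\|\Qc\|_{L^{2}(0,T;\V^{*})}^{2}$ (Jensen/Cauchy--Schwarz together with $\Hi\incl\V^{*}$) yields exactly \eqref{eq4:aPrioriEst}.
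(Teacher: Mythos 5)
Your proposal is correct and follows essentially the same route as the paper: rewrite each step as an algebraic Riccati equation for $A_n+\frac{1}{2\tau}I$, invoke Theorem~\ref{satz3:AlgRicNeg} inductively to get existence with a crude lower bound, upgrade that bound to $P_n\ge-\gamma$ by testing with eigenvectors and excluding the smaller root of the resulting scalar quadratic, and obtain \eqref{eq4:aPrioriEst} by testing \eqref{eq4:RiccSemi} with $P_n$ and summing. The only differences are bookkeeping: you apply the theorem with the parameter $\delta=\sqrt{\gamma/\tau+\gamma^{2}}$ and $\mu_H=\frac{\mu}{C_{V,H}^{2}}+\frac{1}{2\tau}$, whereas the paper uses $\gamma+\frac{1}{2\tau}$ and $\frac{2\mu}{C_{V,H}^{2}}+\frac{1}{2\tau}$; both choices satisfy the required strict inequality under the hypothesis $\gamma<\frac{\mu}{C_{V,H}^{2}}$.
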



\begin{proof}
	Considering the semi-discrete problem \eqref{eq4:RiccSemi}
	for each step $n=1, 2, \dots , N$, we obtain an algebraic Riccati equation of the form
	\begin{align}\label{eq4:ricAlg}
	\left(A_n + \frac{1}{2\tau}I \right) ^* P_n + P_n \left(A_n +
	\frac{1}{2\tau}I
	\right) + P_n^2 = Q_n + \frac{1}{\tau} P_{n-1}
	\end{align}
	with
	\begin{align*}
	\dual{\left(A_n + \frac{1}{2\tau}I \right) P }{P}
	\geq \mu \|P\|^2_{\V} + \frac{1}{2\tau} \|P\|^2_{\Hi}
	\geq \left(\frac{ 2 \mu}{C_{V,H}^2 }   +
	\frac{1}{2\tau}\right) \|P\|^2_{\Hi}
	\end{align*}
	for $P\in \V$.
	We can then apply Theorem~\ref{satz3:AlgRicNeg} (choosing $\gamma$ in
Theorem~\ref{satz3:AlgRicNeg} appropriately with $\mu_H := \frac{ 2 \mu}{C_{V,H}^2 }   +
	\frac{1}{2\tau}$) if
	\begin{align*}
	Q_n + \frac{1}{\tau} P_{n-1}
	\geq - \left(\gamma + \frac{1}{2\tau}\right)^2
	> - \left( \frac{ 2 \mu}{C_{V,H}^2 }  +
	\frac{1}{2\tau}\right)^2
	= - \mu_H^2
	\end{align*}
	is fulfilled for every $n=1, 2, \dots,N$. To prove this condition, we argue
	inductively. Since $P_{0} \geq- \gamma$ and $Q_1 \geq - \gamma^2 $, we
	obtain that
	\begin{align*}
	Q_1 + \frac{1}{\tau} P_{0}
	\geq - \gamma^2 - \frac{\gamma}{\tau}
	> - \left(\left(\frac{ 2 \mu}{C_{V,H}^2 }
	\right)^2 + \frac{1}{\tau }  \frac{ 2 \mu}{C_{V,H}^2 }  +
	\frac{1}{4\tau^2}\right)
	= - \left(\frac{ 2 \mu}{C_{V,H}^2 }  + \frac{1}{2\tau}\right)^2.
	\end{align*}
	The existence of $P_1 \in \V$ then follows  using
	Theorem~\ref{satz3:AlgRicNeg}.
	It remains to prove that $P_{n-1}\geq -\gamma$ implies
	$P_n \geq -\gamma$ for arbitrary $n=1,2,\dots,N$.
	Since the existence of a compact and self-adjoint operator $P_n$ already
	follows from
	using $P_{n-1} \geq -\gamma$, there exists an orthonormal
	system
	$\seqd{e}{i}$ of eigenvectors in $H$ and real eigenvalues
	$\seqd{\alpha}{i}$ in $\R$ such that
	\begin{align*}
	P_n = \sum_{i=1}^{\infty} \alpha_i \ska[H]{e_i}{\cdot} e_i.
	\end{align*}
	Note that the eigenvectors and eigenvalues depend on $n$ but to
	keep the notation simple, we will not state this dependence.
  Testing \eqref{eq4:ricAlg} with $e_i$, $i\in \N$, it follows that
	\begin{align*}
	\ska[H]{P_n^2 e_i}{e_i} + 2\ska[H]{\left(A_n +
		\frac{1}{2\tau}I\right)^*
		P_n 	e_i}{e_i}  - \ska[H]{Q_n e_i}{e_i} - \frac{1}{\tau} \ska[H]{P_{n-1}
		e_i}{e_i}
	=  0.
	\end{align*}
	Abbreviating the terms $\ska[H]{Q_n e_i}{e_i} = q_i$,	 $\ska[H]{P_{n-1}
	e_i}{e_i} = p_i$,	and $\dual[V^*\times V]{A_n e_i}{e_i} = \mathbf{a}_i$, the
	equation can be simplified to
	\begin{align*}
	\alpha_i^2 + 2\alpha_i \left(\mathbf{a}_i + \frac{1}{2\tau} \right)  -
	q_i - \frac{p_i}{\tau}	 =  0.
	\end{align*}
Note that the discriminant is larger than $\frac{1}{4\tau^2}$ so that the roots are real. Since we know from Theorem~\ref{satz3:AlgRicNeg} (choosing $\gamma$ in Theorem~\ref{satz3:AlgRicNeg} appropriately) that $\alpha_i >
- \left(\frac{2\mu}{C_{V,H}^2} + \frac{1}{2\tau}\right)$, we only have to consider the larger of the two solutions of this quadratic equation
in $\R$ given by
	\begin{align*}
	&- \left( \mathbf{a}_i  + \frac{1}{2\tau} \right)
	+
	\sqrt{\left(\mathbf{a}_i + \frac{1}{2\tau }
		\right)^2 + q_i + \frac{p_i}{\tau } }
  = \frac{q_i + \frac{1}{\tau} p_i}{ \mathbf{a}_i + \frac{1}{2\tau}
  + \sqrt{\left(\mathbf{a}_i + \frac{1}{2\tau} \right)^2 +  q_i +
		\frac{p_i}{\tau}}}
	\end{align*}
if $\tau$ is sufficiently small such that $\tau < \frac{C_{V,H}^2}{2\mu}$.

Due to Assumption~\ref{as:OpAB}, the values $\mathbf{a}_i$ fulfill the
	estimate
	$\mathbf{a}_i\geq \mu \|e_i\|_V^2 \geq \frac{\mu}{C_{V,H}^2} $
	for every $i\in \N$. Since we have assumed that $P_{n-1} \geq 	-\gamma$
	and $Q_{n-1} \geq-\gamma^2$, it follows that $p_i \geq
	-\gamma$ and $q_i \geq -\gamma^2$.
	Therefore, we find
	\begin{align*}
	\alpha_i
	&= \frac{q_i + \frac{1}{\tau} p_i}{ \mathbf{a}_i + \frac{1}{2\tau} +
		\sqrt{\left(\mathbf{a}_i + \frac{1}{2\tau} \right)^2 +  q_i +
		\frac{1}{\tau} p_i}}\\
	&\geq  \frac{-\gamma^2  - \frac{1}{\tau} \gamma }{ \mathbf{a}_i +
		\frac{1}{2\tau}  + \sqrt{ \mathbf{a}_i^2 + \frac{1}{\tau} \mathbf{a}_i+
			\frac{1}{4\tau^2} +  q_i + \frac{1}{\tau}
			p_i}}\\
	&\geq \frac{-\gamma\left(\gamma  + \frac{1}{\tau}\right)  }{
		\mathbf{a}_i +
		\frac{1}{2\tau}  + \sqrt\frac{1}{4\tau^2}}
	\geq \frac{-\gamma\left(\frac{\mu}{C_{V,H}^2}   +
	\frac{1}{\tau}\right)
	}{ \frac{\mu}{C_{V,H}^2}  + \frac{1}{\tau} }
	= -\gamma,
	\end{align*}
	where we have employed that
	\begin{align*}
		\mathbf{a}_i +  p_i
		> \gamma - \gamma
		\geq 0
	\quad \text{and} \quad
	\mathbf{a}_i^2 +  q_i
	> \gamma^2 - \gamma^2
	\geq 0.
	\end{align*}
	This proves that $P_n\geq - \gamma$ and thus the
	existence of all $P_n$ for $n=1,2,\dots,N$.

	Let us now derive an a priori bound. Testing \eqref{eq4:RiccSemi}
	with $P_n$, and using both
  \begin{align*}
  \ska{P_n - P_{n-1}}{P_n}
  = \frac{1}{2} \|P_n \|^2_{\Hi} - \frac{1}{2} \|P_{n-1}\|^2_{\Hi}
  + \frac{1}{2} \|P_n -  P_{n-1}\|^2_{\Hi}
  \end{align*}
  and Young's inequality,
 we obtain  that
	\begin{equation}\label{eq4:NegAprioriTested}
	\begin{split}
	&\frac{1}{2\tau}\left(\|P_n \|^2_{\Hi} - \|P_{n-1}\|^2_{\Hi} + \|P_n -
	P_{n-1}\|^2_{\Hi} \right) + \mu \|P_n\|^2_{\V} + \ska{P_n^2}{P_n}\\
	& \leq \frac{1}{2 \left(\mu - \frac{\gamma C_{V,H}^2 }{2}
		\right)} \|Q_n\|_{\V^*}^2 + \frac{\mu -
		\frac{\gamma C_{V,H}^2 }{2} }{2	}
	\|P_n\|^2_{\V}	.
	\end{split}
	\end{equation}
	Using Lemma~\ref{lemma3:QuadTermNeg},
	we can estimate the nonlinearity by
	\begin{align*}
	\ska{P_n^2}{P_n}
	\geq -\gamma \|P_n \|^2_{\Hi} \geq - \frac{\gamma C_{V,H}^2
	}{2} \|P_n\|^2_{\V}.
	\end{align*}
	Inserting this estimate into \eqref{eq4:NegAprioriTested}, it follows that
	\begin{align*}
	&\frac{1}{2\tau}(\|P_n \|^2_{\Hi} - \|P_{n-1}\|^2_{\Hi} + \|P_n -
	P_{n-1}\|^2_{\Hi}) + \left( \mu -  \frac{\gamma C_{V,H}^2
	}{2} \right) \|P_n\|^2_{\V} \\
	&\quad \leq \frac{1}{2 \left( \mu- \frac{\gamma C_{V,H}^2
		}{2} \right)} \|Q_n\|_{\V^*}^2 + \frac{\mu -
		\frac{\gamma C_{V,H}^2 }{2}	}{2	} \|P_n\|^2_{\V}
	\end{align*}
	and, therefore,
	\begin{align*}
	\|P_n \|^2_{\Hi} - \|P_{n-1}\|^2_{\Hi} + \|P_n - P_{n-1}\|^2_{\Hi} +
	\left( \mu- \frac{\gamma C_{V,H}^2
	}{2} \right)\tau \|P_n\|^2_{\V} \leq
	\frac{\tau}{\mu-\frac{\gamma C_{V,H}^2 }{2} }
	\|Q_n\|_{\V^*}^2 .
	\end{align*}
	Summing up leads to the estimate
	\begin{align}
	\begin{split}\label{eq4:apriori}
	&\|P_n \|^2_{\Hi} +\sum_{k=1}^{n} \|P_k - P_{k-1}\|^2_{\Hi} +
	\left(\mu-\frac{\gamma C_{V,H}^2 }{2} \right)\tau\sum_{k=1}^{n}
	\|P_k\|^2_{\V} \\
	&\quad\leq \|P_{0}\|^2_{\Hi} + \frac{\tau}{\mu-\frac{\gamma C_{V,H}^2 }{2}}
	\sum_{k=1}^{N} \|Q_k\|_{\V^*}^2, \quad n = 1, 2, \dots, N .
	\end{split}
	\end{align}
	Since
	\begin{align}\label{eq4:boundQ}
	\tau \sum_{i=1}^{N} \|Q_n\|_{\V^*}^2
	\leq  \sum_{i=1}^{N} \int_{t_{i-1}}^{t_i} \|\Qc(t)\|_{\V^*}^2 dt
	= \|\Qc\|_{L^2(0,T;\V^*)}^2,
	\end{align}
	the right-hand side of \eqref{eq4:apriori} can be simplified and we obtain
	the desired a priori estimate \eqref{eq4:aPrioriEst}, recalling that $\Pc_0 = P_0$.
\end{proof}

Using the solution $\left( P_n\right)_{n=1}^{N}$ of the semi-discrete
problem \eqref{eq4:RiccSemi}, we define both a piecewise constant and a
piecewise linear interpolation. For $t \in (t_{n-1},t_{n}]$,
$n=1,2,\dots,N$, define
\begin{align} \label{eq4:defInterpol}
\Pc_{\tau} (t) = P_{n}
\quad
\text{and}
\quad
\Pctil_{\tau} (t) = \frac{P_{n} - P_{n-1}}{\tau} (t-t_{n-1}) + P_{n-1},
\end{align}
with $\Pctil_{\tau} (0) = \Pc_{\tau} (0) = P_0$. Further, we define the
piecewise constant interpolations for the discrete right-hand side $(Q_n)_{n=1}^N$ and
for $(\Ac_n)_{n=1}^N$: For $t \in (t_{n-1},t_{n}]$,
$n=1,2,\dots,N,$ we define
\begin{align*}
\Qc_{\tau} (t) = Q_{n}, \quad \A_{\tau}(t) = A_{n}
, \quad
\Ac_{\tau}(t)P
= \A^*_{\tau}(t)P + P\A_{\tau}(t) +P^2
\end{align*}
for $P \in \V$.
As the function $\Pctil_{\tau}: [0,T] \to \Hi$ is piecewise linear and
continuous, it is weakly differentiable with the derivative $\Pctil_{\tau}'(t)
=\frac{P_{n} - P_{n-1}}{\tau}$ for $t \in (t_{n-1},t_{n})$, $n= 1,2, \dots , N$. Using the
functions $ \Pc_{\tau} $ and $\Pctil_{\tau}$, the semi-discrete problem
\eqref{eq4:RiccSemi} reads
\begin{equation} \label{eq4:RiccDiscret}
\begin{split}
\Pctil'_{\tau}(t) + \Ac_{\tau}(t) \Pc_{\tau}(t) &=\Qc_{\tau}(t) \quad
\text{ for almost every } t\in (0,T),\\
\Pc_{\tau} (0) &= P_{0}.
\end{split}
\end{equation}

\subsection{Limiting process} \label{subsection:limit}

The approximate solutions $\Pctil_{\tau}$ and $\Pc_{\tau}$
from the last subsection provide a sequence of
functions that will be shown to converge to a weak solution of
\eqref{eq0:InitialRic} as $\tau \to 0$. In this subsection, we examine each term of
\eqref{eq4:RiccDiscret} for the limiting process $\tau \to 0$.

\begin{lemma} \label{lemma4:konvTF}
	Let Assumptions~\ref{as:spaces}.\:and \ref{as:OpAB}.\:be
	satisfied.
	Further let  $0\le \gamma <  \frac{\mu }{C_{V,H}^2}$, $\Qc \in
	L^{2}(0,T;\Hi)$ with
	$\Qc(t)=\Qc^*(t)$ and $\Qc(t)\geq - \gamma^2$ for
	almost every $t\in [0,T]$ and $\Pc_0 \in \Hi$ with $\Pc_0=\Pc_0^*$, and
	$\Pc_0\geq -\gamma$.
	Let $(N_k)_{k\in\N}$ be a sequence of positive integers such that
	$N_k \to \infty$ as $k \to \infty$ and let $(\tau_k)_{k\in\N}$ be the
	sequence of step sizes $\tau_k = \frac{T}{N_k}$ such that $\sup_{k\in\N}\tau_k < \frac{\mu}{2C_{V,H}^2}$. Then there exists a
	subsequence $(N_{k'})_{k'\in \N}$ and $\Pc \in L^{\infty}(0,T;\Hi)
\cap L^2(0,T;\V)$ with $\Pc' \in L^2(0,T;\V^*)$ (so that $\Pc \in \W$)
such that the
	sequences $(\Pc_{\tau_{k'}})_{k' \in \N}$ and $(\Pctil_{\tau_{k'}})_{k' \in
	\N}$ of approximate solutions to \eqref{eq0:InitialRic} satisfy
	\begin{align*}
	\Pc_{\tau_{k'}} &\weak \Pc \text{ in } L^2(0,T;\V),\\
	\Pctil_{\tau_{k'}},\Pc_{\tau_{k'}} &\weaks \Pc \text{ in }
	L^{\infty}(0,T;\Hi), \quad \text{ and }\\
	\Pctil'_{\tau_{k'}} &\weak \Pc' \text{ in } L^2(0,T;\V^*)
	\end{align*}
	as $k' \to \infty$.
\end{lemma}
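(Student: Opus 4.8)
The proof will consist of extracting, from the \emph{a priori} estimate~\eqref{eq4:aPrioriEst} of Lemma~\ref{lemma4:Apriori} (which applies to each $\tau_k$ since $\sup_{k}\tau_k<\frac{\mu}{2C_{V,H}^2}$), four uniform bounds and then passing to weakly and weakly-$\ast$ convergent subsequences. Note first that $\mu-\frac{\gamma C_{V,H}^2}{2}\ge\frac{\mu}{2}>0$, which is where the hypothesis $\gamma<\frac{\mu}{C_{V,H}^2}$ enters. The term $\|P_n\|_{\Hi}^2$ on the left of~\eqref{eq4:aPrioriEst} is bounded by the $k$-independent right-hand side, so, recalling that $\Pc_{\tau_k}(t)=P_n$ for $t\in(t_{n-1},t_n]$, the sequence $(\Pc_{\tau_k})_{k}$ is bounded in $L^{\infty}(0,T;\Hi)$; since $\Pctil_{\tau_k}(t)$ is on each subinterval a convex combination of $P_{n-1}$ and $P_n$ (with $P_0=\Pc_0\in\Hi$), $(\Pctil_{\tau_k})_{k}$ is bounded in $L^{\infty}(0,T;\Hi)$ as well. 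The last term on the left of~\eqref{eq4:aPrioriEst} dominates $\bigl(\mu-\frac{\gamma C_{V,H}^2}{2}\bigr)\|\Pc_{\tau_k}\|_{L^{2}(0,T;\V)}^2$, so $(\Pc_{\tau_k})_{k}$ is bounded in $L^{2}(0,T;\V)$.

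The remaining bound, for $\Pctil'_{\tau_k}$ in $L^{2}(0,T;\V^*)$, is read off from the semi-discrete equation~\eqref{eq4:RiccDiscret}, i.e.\ $\Pctil'_{\tau_k}=\Qc_{\tau_k}-\A^*_{\tau_k}\Pc_{\tau_k}-\Pc_{\tau_k}\A_{\tau_k}-\Pc_{\tau_k}^2$. By the uniform boundedness part of Assumption~\ref{as:OpAB} the linear term is controlled in $\V^*$ by $c\,\|\Pc_{\tau_k}(t)\|_{\V}$, hence is bounded in $L^{2}(0,T;\V^*)$ by the bound just obtained; $\Qc_{\tau_k}$ is bounded in $L^{2}(0,T;\V^*)$ by the averaging estimate~\eqref{eq4:boundQ}; and for the quadratic term submultiplicativity of $\|\cdot\|_{\Hi}$ gives $\|\Pc_{\tau_k}^2(t)\|_{\Hi}\le\|\Pc_{\tau_k}(t)\|_{\Hi}^2$, so $\Pc_{\tau_k}^2$ is bounded in $L^{\infty}(0,T;\Hi)\incl L^{2}(0,T;\V^*)$. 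Thus $(\Pctil'_{\tau_k})_{k}$ is bounded in $L^{2}(0,T;\V^*)$. I expect this quadratic-term estimate to be the only point that is not soft functional analysis: it is precisely the pointwise $\Hi$-bound coming from~\eqref{eq4:aPrioriEst} combined with submultiplicativity that makes it work, and this is exactly the step that would break down for the more general nonlinearity $\Pc BB^*\Pc$ discussed in Section~1.

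Since $L^{2}(0,T;\V)$ and $L^{2}(0,T;\V^*)$ are reflexive and $L^{\infty}(0,T;\Hi)$ is the dual of the separable space $L^{1}(0,T;\Hi)$ (as $\Hi$ is a separable Hilbert space, it is reflexive and has the Radon--Nikod\'ym property), successive extraction yields a single subsequence $(\tau_{k'})_{k'}$ along which $\Pc_{\tau_{k'}}\weak\Pc$ in $L^{2}(0,T;\V)$, $\Pc_{\tau_{k'}}\weaks P^{(1)}$ and $\Pctil_{\tau_{k'}}\weaks P^{(2)}$ in $L^{\infty}(0,T;\Hi)$, and $\Pctil'_{\tau_{k'}}\weak\xi$ in $L^{2}(0,T;\V^*)$ for some limits $\Pc,P^{(1)},P^{(2)},\xi$. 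It then remains to identify $P^{(1)}=P^{(2)}=\Pc$ and $\xi=\Pc'$, after which all the asserted convergences hold and $\Pc\in L^{\infty}(0,T;\Hi)\cap L^{2}(0,T;\V)$ with $\Pc'\in L^{2}(0,T;\V^*)\subseteq L^{1}(0,T;\Hi)+L^{2}(0,T;\V^*)$, that is, $\Pc\in\W$.

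For the identification I would first note that, for $t\in(t_{n-1},t_n]$, one has $\Pctil_{\tau}(t)-\Pc_{\tau}(t)=\tfrac{t-t_n}{\tau}(P_n-P_{n-1})$, so $\|\Pctil_{\tau_k}-\Pc_{\tau_k}\|_{L^{2}(0,T;\Hi)}^2\le\tau_k\sum_{j}\|P_j-P_{j-1}\|_{\Hi}^2\le c\,\tau_k\to0$; hence $\Pctil_{\tau_{k'}}-\Pc_{\tau_{k'}}\to0$ strongly in $L^{2}(0,T;\Hi)$, which forces $P^{(1)}=P^{(2)}$. Testing both the weak-$\ast$ limit in $L^{\infty}(0,T;\Hi)$ and the weak limit in $L^{2}(0,T;\V)$ against functions $\varphi\,R$ with $\varphi\in C_c^{\infty}(0,T)$ and $R\in\Hi$ — which lie in $L^{1}(0,T;\Hi)$ and, via $\Hi\incl\V^*$, in $L^{2}(0,T;\V^*)$, the two pairings then coinciding for all such test functions — shows $P^{(1)}=\Pc$, so all three sequences converge to $\Pc$ in the stated senses. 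Finally, letting $k'\to\infty$ in the defining identity $\int_0^T\Pctil_{\tau_{k'}}(t)\,\varphi'(t)\,dt=-\int_0^T\Pctil'_{\tau_{k'}}(t)\,\varphi(t)\,dt$ for $\varphi\in C_c^{\infty}(0,T)$, using $\Pctil_{\tau_{k'}}\weaks\Pc$ in $L^{\infty}(0,T;\Hi)$ and $\Pctil'_{\tau_{k'}}\weak\xi$ in $L^{2}(0,T;\V^*)$, gives $\int_0^T\Pc\,\varphi'=-\int_0^T\xi\,\varphi$ in $\V^*$, i.e.\ $\xi=\Pc'$. This completes the plan.
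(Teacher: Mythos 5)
Your proposal is correct and follows essentially the same route as the paper's proof: the uniform bounds from the a priori estimate \eqref{eq4:aPrioriEst}, the bound on $\Pctil_{\tau}'$ read off from the discrete equation using \eqref{eq4:boundQ}, the uniform boundedness of $a$ and submultiplicativity of $\|\cdot\|_{\Hi}$, weak/weak-$\ast$ compactness, the $O(\tau)$ estimate for $\|\Pc_{\tau}-\Pctil_{\tau}\|_{L^2(0,T;\Hi)}^2$ via $\sum_k\|P_k-P_{k-1}\|_{\Hi}^2\le c$, and the distributional identification of $\Pc'$. Your explicit identification of the several subsequential limits with one another is slightly more detailed than the paper's, but the argument is the same.
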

\begin{proof}
	In the following, we will omit the indices $k$  to keep the notation simple.
	Using the a priori estimate \eqref{eq4:aPrioriEst}, we obtain the uniform
	boundedness of the piecewise constant and the piecewise linear
	interpolation with respect to the norm of $L^{\infty}(0,T;\Hi)$:
	\begin{align*}
	\|\Pc_{\tau}\|_{L^{\infty}(0,T;\Hi)} \leq c , \quad
	\|\Pctil_{\tau} 	\|_{L^{\infty}(0,T;\Hi)} \leq c,
	\end{align*}
	where $c>0$ only depends on the data of the problem but not on $\tau$.
	In an analogous manner, \eqref{eq4:aPrioriEst}
	yields the uniform boundedness of the piecewise constant interpolation in
	$L^{2}(0,T;\V)$,
	\begin{align*}
	\|\Pc_{\tau}\|_{L^{2}(0,T;\V)}^2
	= \sum_{n=1}^{N} \int_{t_{n-1}}^{t_n}\|\Pc_{\tau}(t)\|_{\V}^2 dt
	= \tau \sum_{n=1}^{N}\|P_{n}\|_{\V}^2  \leq c.
	\end{align*}
	For the $L^2(0,T;\V^*)$-norm of the derivative of $\Pctil_{\tau}$, it
	follows that
	\begin{align*}
	\|\Pctil_{\tau}' \|_{L^2(0,T;\V^*)}^2
	&= \sum_{n=1}^{N} \int_{t_{n-1}}^{t_n} \left\|\frac{P_{n} -
		P_{n-1}}{\tau}\right\|^2_{\V^*} dt
	=  \tau \sum_{n=1}^{N} \left\|Q_n - \Ac_nP_n \right\|^2_{\V^*} \\
	&\leq  2 \tau \sum_{n=1}^{N} \left(\|Q_n\|^2_{\V^*} + \| \Ac_nP_n
	\|^2_{\V^*}\right) .
	\end{align*}
  Moreover, we recall \eqref{eq4:boundQ} and, using
  Assumption~\ref{as:OpAB}, we see that
  \begin{align*}
    \| \Ac_n P_n \|_{\V^*}
    &\leq \| A_n^* P_n \|_{\V^*} + \| P_n A_n \|_{\V^*} + \| P_n^2 \|_{\V^*}
    \\
    &\leq 2 \eta \| P_n \|_{\V} + \frac{C_{V,H}}{\sqrt{2}} \| P_n^2 \|_{\Hi}
    \leq 2 \eta \| P_n \|_{\V} + \frac{C_{V,H}}{\sqrt{2}} \| P_n \|_{\Hi}^2,
  \end{align*}
  and thus
  \begin{align*}
    \tau \sum_{n=1}^{N} \|\Ac_n P_n \|_{\V^*}^2
    \leq c \Big( \tau \sum_{n=1}^{N} \| P_n \|_{\V}^2 + \max_{n=1,2,\dots,N} \| P_n
    \|_{\Hi}^4 \Big)
    \leq c.
  \end{align*}
  Because of the reflexivity of $L^2(0,T;\V)$ and its dual and the fact that
	$L^{\infty}(0,T;\Hi)$ is the dual of the separable normed space
	$L^{1}(0,T;\Hi)$,
	there exist $\Pc \in L^{\infty}(0,T;\Hi) \cap
	L^2(0,T;\V)$, $\Pctil \in L^{\infty}(0,T;\Hi)$ and $\Rc \in L^2(0,T;\V^*)$
	such that
	\begin{align*}
	\Pc_{\tau} &\weaks \Pc \text{ in } L^{\infty}(0,T;\Hi),\\
	\Pctil_{\tau} &\weaks \Pctil \text{ in } L^{\infty}(0,T;\Hi),\\
	\Pc_{\tau} &\weak \Pc \text{ in } L^{2}(0,T;\V), \quad \text{ and }\\
	\Pctil'_{\tau} &\weak \Rc \text{ in } L^{2}(0,T;\V^*)
	\end{align*}
	as $\tau \to 0$ (passing to a subsequence if necessary), compare
	\cite[Chapter 3]{H.Brezis.2010} for further details. The next step is
	to prove that the limit $\Pc$ of the constant interpolation coincides with
	the limit $\Pctil$ of the linear interpolation. The a priori estimate
	\eqref{eq4:aPrioriEst} implies that
	\begin{align*}
	\int_{0}^{T}\| \Pc_{\tau}(t) - \Pctil_{\tau}(t) \|_{\Hi} ^2 dt
	&= \sum_{n=1}^{N} \int_{t_{n-1}}^{t_n}  \left\| \frac{P_{n}- P_{n-1}}{\tau}
	(t_{n}-t) \right\|_{\Hi} ^2 dt\\
	&= \frac{\tau}{3} \sum_{n=1}^{N} \left\| P_{n}- P_{n-1} \right\|_{\Hi}^2
	\leq \tau c \to 0
	\end{align*}
	as $\tau \to 0$.
	Therefore, the limits of $(\Pc_{\tau})$ and
	$(\Pctil_{\tau})$ coincide. The last step to prove the assertion is
	to show that $\Pc' =\Rc$. For arbitrary $S \in \V$ and $\varphi \in
	C^{\infty}_c(0,T)$, it follows that
	\begin{align*}
	\int_{0}^{T} \dual{\Rc(t)}{S} \varphi(t) dt
	&= \int_{0}^{T} \dual{\Rc(t)- \Pctil'_{\tau}(t) }{S} \varphi(t) dt -
	\int_{0}^{T} \dual{\Pctil_{\tau}(t)}{S} \varphi'(t) dt\\
	&\to - \int_{0}^{T} \dual{\Pc(t)}{S} \varphi'(t) dt
	\end{align*}
	as $\tau \to 0$. Thus, $\Rc \in L^2(0,T;\V^*)$ is the weak derivative
	$\Pc'$ of $\Pc \in L^2(0,T;\V) \subseteq L^2(0,T;\V^*)$. This finally shows that $\Pc \in \W$.
\end{proof}

The next step is to study the behaviour of the operators $\A_{\tau}^*(t)$ and
$\A_{\tau}(t) $ as $\tau \to 0$.

\begin{lemma} \label{lemma4:KonvergenzA}
	Let Assumptions~\ref{as:spaces}.\:and \ref{as:OpAB}.\:be satisfied.
  Further, let $(N_k)_{k\in\N}$ be a sequence of positive integers such that
	$N_k \to \infty$ as $k \to \infty$ and let $(\tau_k)_{k\in\N}$ be the
	sequence of step sizes $\tau_k = \frac{T}{N_k}$ such that $\sup_{k\in\N}\tau_k < \frac{\mu}{2C_{V,H}^2}$.
	Then for all 	$P \in \V$ both
	\begin{align*}
	\A_{\tau_k}^* (t) P \to  \A^*(t) P \text{ in } \V^*
	\quad
	\text{and}
	\quad
	P \A_{\tau_k} (t) \to  P \A(t) \text{ in } \V^*
	\end{align*}
	hold for almost every $t\in [0,T]$ as $k\to\infty$. Furthermore, also
  \begin{align*}
  \A_{\tau_k}^* P \to  \A^* P \text{ in } L^r(0,T;\V^*)
  \quad
  \text{and}
  \quad
  P \A_{\tau_k} \to  P \A \text{ in } L^r(0,T;\V^*)
  \end{align*}
  for every $r \in [1,\infty)$ as $k \to \infty$.
\end{lemma}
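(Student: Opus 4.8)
My plan is to view $\A_{\tau_k}(t)$ as the average of $\A(\cdot)$ over the subinterval $(t_{n-1},t_n]$ that contains $t$, so that the assertion becomes a vector-valued Lebesgue differentiation statement, proved first in $V$ and $V^*$ and then transported to the Hilbert--Schmidt operator spaces. \textbf{Step 1.} First I would fix $u\in V$ and note that, by Assumption~\ref{as:OpAB}, $t\mapsto\dual[V^*\times V]{\A(t)u}{v}=a(t;u,v)$ is measurable for every $v\in V$ while $\|\A(t)u\|_{V^*}\le\eta\|u\|_V$; since $V$, hence $V^*$, is separable, this gives $\A(\cdot)u\in L^{\infty}(0,T;V^*)$. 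By the vector-valued Lebesgue differentiation theorem almost every $t$ is a Lebesgue point of $\A(\cdot)u$, and since $(t_{n-1},t_n]\ni t$ shrinks to $\{t\}$,
\begin{align*}
\|\A_{\tau_k}(t)u-\A(t)u\|_{V^*}\le\frac{1}{\tau_k}\int_{t_{n-1}}^{t_n}\|\A(s)u-\A(t)u\|_{V^*}\,ds\longrightarrow 0.
\end{align*}
Carrying this out over a countable dense subset $D\subseteq V$ and intersecting the corresponding null sets yields a full-measure set $E\subseteq(0,T)$ on which $\A_{\tau_k}(t)u\to\A(t)u$ in $V^*$ for all $u\in D$; since every $\A_{\tau_k}(t)$ has operator norm at most $\eta$, a $3\varepsilon$-argument extends this to all $u\in V$. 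Running the same argument for $\A^*$ and shrinking $E$ once more, I obtain strong --- but, in general, not norm --- convergence $\A_{\tau_k}(t)\to\A(t)$ and $\A^*_{\tau_k}(t)\to\A^*(t)$ in $\Lin(V,V^*)$ for every $t\in E$.

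\textbf{Step 2.} Next I would lift this to $\V^*$. Fixing $P\in\V$, $t\in E$ and an orthonormal basis $\seqd{h}{n}$ of $H$, I would use $\|\cdot\|_{\V^*}\le\|\cdot\|_{\Hs(H,V^*)}$ and write
\begin{align*}
\|\A^*_{\tau_k}(t)P-\A^*(t)P\|_{\Hs(H,V^*)}^2=\sum_{n=1}^{\infty}\big\|\big(\A^*_{\tau_k}(t)-\A^*(t)\big)Ph_n\big\|_{V^*}^2 .
\end{align*}
Each summand tends to $0$ by Step~1, because $Ph_n$ is a \emph{fixed} vector of $V$ (this is where the ingredient $P\in\Hs(H,V)$ enters), and it is dominated by $4\eta^2\|Ph_n\|_V^2$ with $\sum_n\|Ph_n\|_V^2=\|P\|_{\Hs(H,V)}^2<\infty$; dominated convergence for series then gives $\A^*_{\tau_k}(t)P\to\A^*(t)P$ in $\V^*$. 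For $P\A_{\tau_k}(t)\to P\A(t)$ I would argue symmetrically, using the other ingredient $P\in\Hs(V^*,H)$: expanding $P$ in a singular value decomposition $P\xi=\sum_j\sigma_j\ska[V^*]{\xi}{\phi_j}\psi_j$ with $\sum_j\sigma_j^2=\|P\|_{\Hs(V^*,H)}^2<\infty$, $(\phi_j)$ orthonormal in $V^*$ and $(\psi_j)$ orthonormal in $H$, a short computation reduces $\|P\A_{\tau_k}(t)-P\A(t)\|_{\Hs(V,H)}^2$ to $\sum_j\sigma_j^2\,\|(\A^*_{\tau_k}(t)-\A^*(t))w_j\|_{V^*}^2$, where $w_j\in V$ is the Riesz representative of $\phi_j$, and this is again controlled via Step~1 and dominated convergence for series. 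This establishes the pointwise almost everywhere convergence.

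\textbf{Step 3.} Finally, for the $L^r$-statements I would combine the a.e.\ convergence of Step~2 with the uniform bounds $\|\A^*_{\tau_k}(t)P-\A^*(t)P\|_{\V^*}\le 2\eta\|P\|_{\Hs(H,V)}$ and $\|P\A_{\tau_k}(t)-P\A(t)\|_{\V^*}\le 2\eta\|P\|_{\Hs(V^*,H)}$, which hold for all $t$ and all $k$; being constant on the finite interval $(0,T)$ they lie in $L^r(0,T)$ for every $r\in[1,\infty)$, so the classical dominated convergence theorem yields $\A^*_{\tau_k}P\to\A^*P$ and $P\A_{\tau_k}\to P\A$ in $L^r(0,T;\V^*)$.

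I expect the only real difficulty to lie in Step~2, namely in turning the operator-wise convergence $\A_{\tau_k}(t)u\to\A(t)u$ --- which holds only in the \emph{strong} operator topology, not in operator norm --- into convergence of the induced Hilbert--Schmidt operators. A naive attempt to bound $\|(\A_{\tau_k}(t)-\A(t))P\|_{\Hs}$ by summing $\|(\A_{\tau_k}(t)-\A(t))f_m\|$ over an orthonormal basis $(f_m)$ of the domain fails, since those terms are merely bounded and not summable. What makes the argument work is exactly the summability encoded in $P\in\V=\Hs(V^*,H)\cap\Hs(H,V)$: one has to choose the orthonormal expansion so that the square-summable side is $(Ph_n)$ for the $\A^*P$-term and the singular values of $P$ for the $P\A$-term, and then justify interchanging the limit $\tau_k\to 0$ with the resulting infinite sum.
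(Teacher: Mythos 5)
Your proof is correct and complete, but the route is genuinely different from what the paper intends. The paper omits the proof and merely indicates a \emph{density} argument: first one checks that $t\mapsto\A^*(t)P$ and $t\mapsto P\A(t)$ lie in $L^r(0,T;\V^*)$, then one approximates by elements of $C^\infty([0,T];\V^*)$ (for which the piecewise constant averages converge uniformly, hence in $L^r$), and finally one uses the uniform $L^r$-boundedness of the averaging operator together with a $3\varepsilon$-argument, as in \cite[Remarks~8.15, 8.21]{T.Roubicek.2013}. Your argument is instead based on the vector-valued Lebesgue differentiation theorem: each $t\mapsto\A(t)u$ is Bochner measurable (Pettis, via the separability of $V^*$ and the weak measurability assumed for $a(\cdot;u,v)$) and bounded, hence a.e.\ $t$ is a Lebesgue point; after intersecting null sets over a countable dense subset of $V$ and using the uniform bound $\|\A_{\tau_k}(t)\|_{\Lin(V,V^*)}\le\eta$, this gives strong-operator a.e.\ convergence. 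Your Step~2 is the genuinely technical part, and you handle it correctly: the passage to $\V^*$ really does require the square-summability of $(Ph_n)_n$ and of the singular values of $P$, and the dominated-convergence-for-series interchange is justified by the uniform bound $4\eta^2$. The two approaches trade off as follows: the density route gives the $L^r$-convergence most cleanly but would need additional work (e.g., subsequence extraction or a separate Lebesgue-point argument) to deliver the pointwise a.e.\ statement for the full sequence $(\tau_k)_k$; your Lebesgue-point route delivers the pointwise a.e.\ statement directly and then obtains the $L^r$-statement for free by dominated convergence on the finite interval $(0,T)$, so it is actually the cleaner way to prove both halves of the lemma as stated.
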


{
We omit the proof of Lemma~\ref{lemma4:KonvergenzA}. The assertion can
be verified employing that both $t \mapsto \A(t) P$ and $t \mapsto P \A(t)$
are elements of $L^r(0,T;\V^*)$ for every $r \in [1,\infty)$ and the fact that
$C^{\infty}([0,T];\V^*)$ is a dense subspace of $L^r(0,T;\V^*)$. See, e.g.,
\cite[Remark~8.15 and Remark~8.21]{T.Roubicek.2013} for more details.
Using that the space $C^{\infty}([0,T];\Hi)$ is dense in
$L^1(0,T;\Hi) + L^2(0,T;\V^*)$, the same argumentation yields the strong
convergence of $\Qc_{\tau}$ to $\Qc$ in $L^1	(0,T;\Hi) + L^2(0,T;\V^*)$ as
$\tau \to 0$.}

\subsection{Weak solution of the Riccati equation}

Using the results of the previous subsections, the assertion of
Theorem~\ref{satz4:variationelLoesung} can be proven. First, we show
the result for a smaller class of right-hand sides $\Qc$.
\begin{lemma} \label{lemma4:variationelLoesungL2}
	Let Assumptions~\ref{as:spaces}.\:and \ref{as:OpAB}.\:be
	satisfied
	and let $\gamma$, with $0\le \gamma<\frac{\mu}{C_{V,H}^2}$, be given. Then for $\Qc \in
	L^{2}(0,T;\Hi)$ with $\Qc(t)=\Qc^*(t)$ and $\Qc(t)\geq
	-\gamma^2$ for almost every $t\in [0,T]$ and
	$\Pc_0\in \Hi$ with $\Pc_0=\Pc_0^*$ and $\Pc_0\geq-\gamma
	$, the initial value problem \eqref{eq0:InitialRic} possesses a weak solution
	$\Pc \in \W$ \red{that fulfills $\Pc(t) \geq - \gamma$ for all $t\in [0,T]$}.
\end{lemma}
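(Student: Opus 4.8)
The plan is to pass to the limit $\tau\to 0$ in the time-discrete equation \eqref{eq4:RiccDiscret} using the convergences collected in Lemma~\ref{lemma4:konvTF} and Lemma~\ref{lemma4:KonvergenzA}. Fix a sequence of step sizes $\tau_k\to 0$ with $\sup_k\tau_k<\frac{\mu}{2C_{V,H}^2}$, apply Lemma~\ref{lemma4:Apriori} to obtain discrete solutions $(P_n)$ and the interpolants $\Pc_{\tau_k}$, $\Pctil_{\tau_k}$, and extract the subsequence (not relabelled) from Lemma~\ref{lemma4:konvTF} so that $\Pc_{\tau_k}\weak\Pc$ in $L^2(0,T;\V)$, $\Pc_{\tau_k},\Pctil_{\tau_k}\weaks\Pc$ in $L^\infty(0,T;\Hi)$, $\Pctil_{\tau_k}'\weak\Pc'$ in $L^2(0,T;\V^*)$, with $\Pc\in\W$. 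Then I would test \eqref{eq4:RiccDiscret} against $R\varphi$ with $R\in\V$ and $\varphi\in C_c^\infty(0,T)$, integrate over $(0,T)$, and identify the limit of each of the four terms.

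The linear terms are routine: for the time-derivative term, $\int_0^T\dual{\Pctil_{\tau_k}'(t)}{R}\varphi(t)\,dt\to\int_0^T\dual{\Pc'(t)}{R}\varphi(t)\,dt$ by the weak convergence of $\Pctil_{\tau_k}'$ in $L^2(0,T;\V^*)$; for the Lyapunov terms, one splits $\A_{\tau_k}^*\Pc_{\tau_k}-\A^*\Pc=(\A_{\tau_k}^*-\A^*)\Pc + \A^*(\Pc_{\tau_k}-\Pc)$ (and similarly for $\Pc_{\tau_k}\A_{\tau_k}$), using Lemma~\ref{lemma4:KonvergenzA} for the first summand (strong convergence in $L^r(0,T;\V^*)$ applied to $R\varphi$, noting $R\in\V$ is a fixed operator) and weak-weak continuity of the bounded linear operator $R\mapsto\A^*R$ for the second. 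The right-hand side converges because $\Qc_{\tau_k}\to\Qc$ strongly in $L^1(0,T;\Hi)+L^2(0,T;\V^*)$, as recorded after Lemma~\ref{lemma4:KonvergenzA}.

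The main obstacle is the quadratic term: one must show $\int_0^T\dual{\Pc_{\tau_k}^2(t)}{R}\varphi(t)\,dt\to\int_0^T\dual{\Pc^2(t)}{R}\varphi(t)\,dt$, and mere weak convergence of $\Pc_{\tau_k}$ does not pass to the square. Here I would invoke a compactness (Aubin--Lions type) argument: since $\Pctil_{\tau_k}$ is bounded in $L^2(0,T;\V)$ with $\Pctil_{\tau_k}'$ bounded in $L^2(0,T;\V^*)$, and $\V\stackrel{c}{\incl}\Hi$, the sequence $\Pctil_{\tau_k}$ is relatively compact in $L^2(0,T;\Hi)$, hence $\Pctil_{\tau_k}\to\Pc$ strongly in $L^2(0,T;\Hi)$; combined with $\|\Pc_{\tau_k}-\Pctil_{\tau_k}\|_{L^2(0,T;\Hi)}\to 0$ from the proof of Lemma~\ref{lemma4:konvTF}, also $\Pc_{\tau_k}\to\Pc$ strongly in $L^2(0,T;\Hi)$. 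Then, using submultiplicativity of $\|\cdot\|_{\Hi}$ and the uniform $L^\infty(0,T;\Hi)$ bound, $\|\Pc_{\tau_k}^2-\Pc^2\|_{L^1(0,T;\Hi)}\le\|\Pc_{\tau_k}-\Pc\|_{L^2(0,T;\Hi)}\big(\|\Pc_{\tau_k}\|_{L^2(0,T;\Hi)}+\|\Pc\|_{L^2(0,T;\Hi)}\big)\to 0$, which is more than enough to pass to the limit in the quadratic term tested against $R\varphi$. Having identified all four limits, the variational identity holds for all $R\in\V$ and $\varphi\in C_c^\infty(0,T)$, so $\Pc\in\W$ is a weak solution; the initial condition $\Pc(0)=\Pc_0$ follows from $\Pc_{\tau_k}(0)=P_0$ together with $\W\incl C([0,T];\Hi)$ and a standard integration-by-parts/test-function-with-$\varphi(0)\neq0$ argument. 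Finally, the pointwise lower bound $\Pc(t)\geq-\gamma$ for all $t\in[0,T]$ is inherited from $P_n\geq-\gamma$ (Lemma~\ref{lemma4:Apriori}): the cone $\{R\in\Hi:R=R^*,R\geq-\gamma\}$ is closed and convex, hence weakly closed, so $\Pc(t)\in C_{-\gamma}$ for a.e.\ $t$, and by continuity into $\Hi$ for every $t\in[0,T]$.
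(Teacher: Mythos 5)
Your route is genuinely different from the paper's. The paper never establishes strong convergence of $\Pc_{\tau}$ in $L^2(0,T;\Hi)$; instead it identifies the weak limit $\Sc$ of $\Ac_{\tau}\Pc_{\tau}$ in $L^2(0,T;\V^*)$ via Minty's trick, i.e.\ it tests with $\Rc_\theta=\Pc-\theta\,\Uc$, exploits that $\Ac_\tau$ is ``almost monotone'' on operators bounded below by $-\gamma$ (this is where $\mu-\gamma C_{V,H}^2>0$ enters a second time, beyond the a priori estimate), and closes the argument with a $\liminf/\limsup$ energy identity that requires $P_N\weak\Pc(T)$ and weak lower semicontinuity of the norm. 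Your compactness argument, if justified, is more elementary at the limit-passage stage: strong $L^2(0,T;\Hi)$ convergence plus submultiplicativity of $\|\cdot\|_{\Hi}$ handles the quadratic term directly and avoids the energy identity altogether. (A small algebraic slip: the correct splitting of the Lyapunov term is $(\A_{\tau}^*-\A^*)\Pc_{\tau}+\A^*(\Pc_{\tau}-\Pc)$, not $(\A_{\tau}^*-\A^*)\Pc+\A^*(\Pc_{\tau}-\Pc)$; with the former, Lemma~\ref{lemma4:KonvergenzA} and dominated convergence must be combined with the uniform $L^2(0,T;\V)$ bound on $\Pc_\tau$, exactly as the paper does inside \eqref{eq4:Minty4}.)

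There is, however, one genuine gap: the claim that $\Pctil_{\tau_k}$ is bounded in $L^2(0,T;\V)$ is false in general, so the classical Aubin--Lions lemma cannot be applied to $\Pctil_{\tau_k}$ as stated. The a priori estimate \eqref{eq4:aPrioriEst} controls $\tau\sum_{k=1}^{N}\|P_k\|_{\V}^2$ but says nothing about $P_0=\Pc_0$, which is only assumed to lie in $\Hi$; on $(0,t_1]$ the linear interpolant involves $P_0$ and hence need not take values in $\V$ at all. This is repairable, but it requires an extra idea: either apply a discrete Aubin--Lions lemma for piecewise constant functions (using the bound on $\Pc_\tau$ in $L^2(0,T;\V)$ together with the bound $\tau\sum_{n}\|\tau^{-1}(P_n-P_{n-1})\|_{\V^*}^2\le c$ from the proof of Lemma~\ref{lemma4:konvTF}), or modify $\Pctil_\tau$ on the first subinterval (e.g.\ replace it there by the constant $P_1$), note that the modification changes the $L^2(0,T;\Hi)$-norm by $O(\sqrt{\tau})$, and apply the classical lemma to the modified sequence. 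Without one of these repairs the strong convergence in $L^2(0,T;\Hi)$ --- and with it your treatment of the quadratic term --- is not established. The remaining steps (initial condition, lower bound $\Pc(t)\ge-\gamma$ via closedness and convexity of the cone, upgraded to all $t$ by continuity into $\Hi$) are sound and essentially coincide with the paper's.
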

Note that under the assumptions of
Lemma~\ref{lemma4:variationelLoesungL2} one even obtains that $\Pc'$ is
an element of $L^2(0,T;\V^*)$.
\begin{proof}[Proof of Lemma~\ref{lemma4:variationelLoesungL2}]
	Again, we use the notation from Lemma~\ref{lemma4:konvTF} and its
	proof. In Lemma~\ref{lemma4:konvTF}, we have shown that there exists
	$\Pc \in \W$ such that
	\begin{align*}
	\Pc_{\tau} &\weak \Pc \text{ in } L^2(0,T;\V),\\
	\Pctil_{\tau},\Pc_{\tau} &\weaks \Pc \text{ in } L^{\infty}(0,T;\Hi),
	\quad \text{ and }\\
	\Pctil'_{\tau} &\weak \Pc' \text{ in } L^{2}(0,T;\V^*)
	\end{align*}
	as $\tau \to 0$.
  {This in mind, we start proving that $\Pc$ fulfills the initial
  condition. In order to do so, let $\Rc(t) = \frac{T-t}{T} \, R$ for arbitrary $R\in \V$, $t\in [0,T]$. Then we obtain that
  \begin{align*}
  & - \int_{0}^{T}  \dualb[\V^* \times \V ]{ \Pctil_{\tau}'(t) }{\Rc(t)} dt
  - \int_{0}^{T} \dualb[\V^* \times \V ]{ \Pctil_{\tau}(t) }{\Rc'(t)} dt\\
  &=  - \red{\Big[ }
  \left( \Pctil_{\tau}(T) , \Rc(T) \right) -
  \left( \Pctil_{\tau}(0) , \Rc(0) \right) \red{ \Big] }
  =  \left( P_0 , R \right)
  \end{align*}
  and also
  \begin{align*}
  - \int_{0}^{T}  \dualb[\V^* \times \V ]{ \Pc'(t) }{\Rc(t)} dt
  - \int_{0}^{T} \dualb[\V^* \times \V ]{ \Pc(t) }{\Rc'(t)} dt
  =  \left( \Pc(0) , R\right) .
  \end{align*}
  Since
  \begin{align*}
  &- \int_{0}^{T}  \dualb[\V^* \times \V ]{ \Pctil_{\tau}'(t) }{\Rc(t)} dt
  - \int_{0}^{T} \dualb[\V^* \times \V ]{ \Pctil_{\tau}(t) }{\Rc'(t)} dt\\
  \to &- \int_{0}^{T}  \dualb[\V^* \times \V ]{ \Pc'(t) }{\Rc(t)} dt
  - \int_{0}^{T} \dualb[\V^* \times \V ]{ \Pc(t) }{\Rc'(t)} dt
  \end{align*}
  as $\tau \to 0$, it follows that $\Pc(0) = P_0$.
 }

Using $\Rc(t) = \frac{t}{T} \, R$ for arbitrary $R\in \V$, $t\in [0,T]$, we obtain similarly  that
  \begin{align*}
  &\left( P_N , R \right)
  =\int_{0}^{T}  \dualb[\V^* \times \V ]{ \Pctil_{\tau}'(t) }{\Rc(t)} dt
  + \int_{0}^{T} \dualb[\V^* \times \V ]{ \Pctil_{\tau}(t) }{\Rc'(t)} dt\\
  \to & \int_{0}^{T}  \dualb[\V^* \times \V ]{ \Pc'(t) }{\Rc(t)} dt
  + \int_{0}^{T} \dualb[\V^* \times \V ]{ \Pc(t) }{\Rc'(t)} dt
  = \left( \Pc(T) , R\right) ,
  \end{align*}
  which means that $P_N \weak \Pc(T)$ in $\Hi$ as $\tau \to 0$. This will be employed later in the proof.

	Using Mazur's lemma, see for example \cite[Corollary 3.8]{H.Brezis.2010},
	there exists a sequence
	$\left(\mathcal{K}_{\tau}\right)$ of convex combinations of
	the elements of the sequence $\left(\Pc_{\tau}\right)$ such
	that
	\begin{align*}
	\mathcal{K}_{\tau} &\to \Pc \quad \text{ in } L^2(0,T;\V) \text{ as } \tau \to 0.
	\end{align*}
	Since every function $\Pc_{\tau}$ is piecewise constant
	with values $P_n \geq -\gamma$ ($n = 0, 1, \dots, N$), it is
	clear	that
	$\Pc_{\tau}(t) \geq -\gamma$ for every $t\in [0,T]$. Thus,
	every convex combination of functions $\Pc_{\tau}$ is pointwise greater or
	equal than $-\gamma$.
  Due to the strong convergence of the sequence $(\mathcal{K}_{\tau}
  )$ in $L^2(0,T;\V)$, there exists a subsequence which converges
  pointwise in $\V$ to the limit $\Pc$ almost everywhere in $[0,T]$. Since
  $\mathcal{K}_{\tau}(t) \geq -\gamma$ for every $t \in
  [0,T]$, the limit $\Pc$ possesses the same lower bound almost everywhere.
  \red{Moreover, $\Pc$ is an element of $\W$. Therefore, it is continuous as
  a function with values in $\Hi$ and
  the lower bound is fulfilled for every $t \in [0, T ]$.}

  The sequence $\left(\Ac_{\tau} \Pc_{\tau}\right)$ is bounded in
  $L^2(0,T;\V^*)$, since
  \begin{align*}
    \| \Ac_{\tau}  \Pc_{\tau} \|_{L^2(0,T;\V^*)}
    &= \| \A_{\tau}^*  \Pc_{\tau} + \Pc_{\tau} \A_{\tau} + \Pc_{\tau}^2
    \|_{L^2(0,T;\V^*)}\\
    &\leq \| \A_{\tau}^*  \Pc_{\tau} \|_{L^2(0,T;\V^*)}
    + \|\Pc_{\tau} \A_{\tau} \|_{L^2(0,T;\V^*)}
    + \| \Pc_{\tau}^2  \|_{L^2(0,T;\V^*)}\\
    &\leq 2 \eta \|  \Pc_{\tau} \|_{L^2(0,T;\V)}
    + \sqrt{T} \frac{C_{V,H}}{\sqrt{2}} \| \Pc_{\tau}  \|_{L^{\infty}(0,T;\Hi)}^2,
  \end{align*}
  where $\eta$ is introduced in Assumption~\ref{as:OpAB}. Thus, there
  exists a subsequence that converges weakly to an element $\Sc \in
  L^2(0,T;\V^*)$.
 Since $\Ac_\tau \Pc_\tau = \Qc_\tau - \widehat{P}_\tau'$, it is clear that
 $\Sc$ coincides with $\Qc-\Pc'$.

	Using this subsequence from now on, it remains to prove that
$\Ac \Pc = \A^* \Pc +\Pc \A + \Pc^2$ coincides with the limit $\Sc$ of
$(\Ac_{\tau}
	\Pc_{\tau})$. We show this using the Minty trick (see for
	example \red{\cite[Lemma III.1.3]{GGZ.1974} or \cite[Lemma
	2.13]{T.Roubicek.2013}}). For a function
	$\Rc_{\theta} = \Pc - \theta \Uc$ with $\Uc\in L^{\infty}(0,T;\V)$ and
	$\theta > 0$ small enough, one obtains that
	\begin{align*}
	&\int_{0}^{T} \dual{\Ac_{\tau}(t) \Pc_{\tau}(t) - \Ac_{\tau}(t)
		\Rc_{\theta}(t)}{\Pc_{\tau}(t) - \Rc_{\theta}(t)} dt\\
	&=\int_{0}^{T} \dual{\A^*_{\tau}(t)(\Pc_{\tau}(t) - \Rc_{\theta}(t)) +
		(\Pc_{\tau}(t) - \Rc_{\theta}(t)) \A_{\tau}(t) }{\Pc_{\tau}(t) -
		\Rc_{\theta}(t)} dt\\
	&\qquad +	 \int_{0}^{T} \dual{\Pc_{\tau}^2(t) - \Rc_{\theta}^2(t) }{
		\Pc_{\tau}(t) - \Rc_{\theta}(t)}dt\\
  &\geq \int_{0}^{T} \mu \|\Pc_{\tau}(t) - \Rc_{\theta}(t) \|^2_{\V} dt
  + \int_{0}^{T} \dual{\Pc_{\tau}(t)(\Pc_{\tau}(t) -
    \Rc_{\theta}(t))}{ \Pc_{\tau}(t) - 	\Rc_{\theta}(t)} dt \\
  &\qquad +	 \int_{0}^{T} \dual{(\Pc_{\tau}(t) -
    \Rc_{\theta}(t)) \Pc(t)}{
    \Pc_{\tau}(t) - \Rc_{\theta}(t)}dt\\
  &\qquad - \int_{0}^{T} \theta \dual{(\Pc_{\tau}(t) - \Rc_{\theta}(t))
     \Uc(t)}{ \Pc_{\tau}(t) - \Rc_{\theta}(t)}dt.
  \end{align*}
  An application of Lemma~\ref{lemma3:QuadTermNeg} then yields
  \begin{align*}
    & \dualb{  \Pc_{\tau}(t) (\Pc_{\tau}(t) - \Rc_{\theta}(t))
    }{ \Pc_{\tau}(t) - 	\Rc_{\theta}(t)}\\
    & \geq - \gamma  \| \Pc_{\tau}(t) - \Rc_{\theta}(t) \|_{\Hi}^2
     \geq - \frac{\gamma C_{V,H}^2 }{2} \|\Pc_{\tau}(t) - \Rc_{\theta}(t))
    \|_{\V}^2.
  \end{align*}
  An analogous argument yields that
  \begin{align*}
    &\dual{(\Pc_{\tau}(t) - \Rc_{\theta}(t))\Pc(t)}{  \Pc_{\tau}(t) -
    \Rc_{\theta}(t)}
    \geq - \frac{\gamma C_{V,H}^2 }{2} \|\Pc_{\tau}(t) - \Rc_{\theta}(t))
    \|_{\V}^2.
  \end{align*}
  Altogether, we then get
  \begin{align*}
   &\int_{0}^{T} \dual{\Ac_{\tau}(t) \Pc_{\tau}(t) - \Ac_{\tau}(t)
     \Rc_{\theta}(t)}{\Pc_{\tau}(t) - \Rc_{\theta}(t)} dt\\
  	&\geq \left (\mu- \gamma C_{V,H}^2\right)
	\int_{0}^{T}  \|\Pc_{\tau}(t) - \Rc_{\theta}(t) \|^2_{\V} dt-
	\int_{0}^{T} \theta \|\Pc_{\tau}(t) - \Rc_{\theta}(t)\|^2_{\Hi}
	\|\Uc(t)\|_{\Hi} dt\\
	&\geq \left (\mu- \gamma C_{V,H}^2 - \theta \frac{
	C_{V,H}^2 }{2} \|\Uc\|_{L^{\infty}(0,T;\Hi)} \right)
	\int_{0}^{T} \|\Pc_{\tau}(t) - \Rc_{\theta}(t)
	\|^2_{\V} dt
	\geq 0
	\end{align*}
	for $\theta \frac{C_{V,H}^2 }{2} \|\Uc\|_{L^{\infty}(0,T;\Hi)} \leq
	\mu- 	\gamma C_{V,H}^2$.
 Thus,
	\begin{align*}
	&\int_{0}^{T} \dual{\Ac_{\tau}(t)\Pc_{\tau}(t)}{\Pc_{\tau}(t)} dt\\
	&\geq \int_{0}^{T} \dual{\Ac_{\tau}(t) \Rc_{\theta}(t) }{\Pc_{\tau}(t) -
		\Rc_{\theta}(t) } dt + \int_{0}^{T} \dual{\Ac_{\tau}(t)\Pc_{\tau}(t)}{
		\Rc_{\theta}(t)} dt
	\end{align*}
	holds for $\theta >0$ sufficiently small. Let us look at the summands of the
	right-hand side separately. For the second summand, the weak
	convergence of
	$( \Ac_{\tau} \Pc_{\tau} )$ implies that
	\begin{align*}
	\int_{0}^{T} \dual{\Ac_{\tau}(t)\Pc_{\tau}(t)}{\Rc_{\theta}(t)} dt \to
	\int_{0}^{T} \dual{\Sc(t)}{\Rc_{\theta}(t)} dt
	\end{align*}
	as $\tau \to 0$. To study the convergence of the first summand, we split it
	up as
	\begin{align} \label{eq4:Minty1}
	&\int_{0}^{T} \dual{\Ac_{\tau}(t)\Rc_{\theta}(t)}{\Pc_{\tau}(t) -
		\Rc_{\theta}(t)} dt \nonumber \\
	\begin{split}
&=
\int_{0}^{T} \dual{\A^*_{\tau}(t)\Rc_{\theta}(t)}{\Pc_{\tau}(t)} dt
+ \int_{0}^{T} \dual{\Rc_{\theta}(t) \A_{\tau}(t)}{\Pc_{\tau}(t)} dt
+ \int_{0}^{T} \dual{\Rc_{\theta}^2(t)}{\Pc_{\tau}(t)} dt
\\
&\quad
- \int_{0}^{T} \dual{\A^*_{\tau}(t)\Rc_{\theta}(t) + \Rc_{\theta}(t)
		\A_{\tau}(t) + \Rc_{\theta}^2(t) }{\Rc_{\theta}(t)}
dt
	\end{split}
	\end{align}
	Then for the first summand on the right-hand side of \eqref{eq4:Minty1},
	we use
  {\begin{align}
    \notag
  & \Big|\int_{0}^{T} \dual{\A^*_{\tau}(t)\Rc_{\theta}(t)}{\Pc_{\tau}(t)} dt -
  \int_{0}^{T} 	\dual{\A^*(t)\Rc_{\theta}(t)}{\Pc(t)} dt \Big| \\
    \notag
  & \leq \Big| \int_{0}^{T} \dual{(\A^*_{\tau}(t) - \A^*(t))\Rc_{\theta}(t)
  }{\Pc_{\tau}(t)} dt \Big|
 + \Big| \int_{0}^{T}  \dual{\A^*(t)\Rc_{\theta}(t)}{\Pc_{\tau}(t) - \Pc(t)}
   dt \Big| \\
   \begin{split} \label{eq4:Minty4}
   & \leq \left( \int_{0}^{T} \|(\A^*_{\tau}(t) -
   \A^*(t))\Rc_{\theta}(t)\|_{\V^*}^2dt  \int_{0}^{T} \|
   \Pc_{\tau}(t)\|_{\V}^2dt\right)^{\frac{1}{2}}\\
   &\quad+ \Big| \int_{0}^{T} \dual{\A^*(t)\Rc_{\theta}(t)}{\Pc_{\tau}(t) - \Pc(t)}
   dt \Big|.
   \end{split}
  \end{align}
  The second summand on the right-hand side of \eqref{eq4:Minty4} converges to zero due
  to the weak convergence of
  $(\Pc_{\tau} )$ towards $\Pc$ in $L^2(0,T;\V)$. The weak convergence of the sequence
  $(\Pc_{\tau} )$ in $L^2(0,T;\V)$ also implies that it is bounded. As shown in
  Lemma~\ref{lemma4:KonvergenzA},  $(\A^*_{\tau}(t) -
  \A^*(t))\Rc_{\theta}(t) \to 0$ in $\V^*$ for almost every $t \in [0,T]$ as $\tau
  \to 0$. Due to the uniform boundedness of $\A^*$, we obtain the
  following bound
  \begin{align*}
  \|(\A^*_{\tau}(t) -  \A^*(t)) \Rc_{\theta}(t)\|_{\V^*}^2
  \leq 4\eta \|\Rc_{\theta}(t)\|_{\V}^2 \quad \text{ for almost every } t\in [0,T],
  \end{align*}
  with $4\eta \|\Rc_{\theta}\|_{\V}^2 \in L^1(0,T)$. Therefore, the first
  summand on the right-hand side of \eqref{eq4:Minty4} converges to zero using Lebesgue's
  theorem on dominated convergence. This proves
  \begin{align*}
  \int_{0}^{T} \dual{\A^*_{\tau}(t)\Rc_{\theta}(t)}{\Pc_{\tau}(t)} dt
  \to
  \int_{0}^{T} 	\dual{\A^*(t)\Rc_{\theta}(t)}{\Pc(t)} dt
  \end{align*}
  as $\tau \to 0$.
  For the second summand on the right-hand side of \eqref{eq4:Minty1},
  we can follow an analogous argumentation. In this case, we have
  \begin{align*}
  & \Big|\int_{0}^{T} \dual{\Rc_{\theta}(t)\A_{\tau}(t)}{\Pc_{\tau}(t)} dt -
  \int_{0}^{T} 	\dual{\Rc_{\theta}(t)\A(t)}{\Pc(t)} dt \Big| \\
  & \leq \Big| \int_{0}^{T} \dual{\Rc_{\theta}(t) (\A_{\tau}(t) - \A(t))
  }{\Pc_{\tau}(t)} dt \Big|
  + \Big| \int_{0}^{T}  \dual{\Rc_{\theta}(t)\A(t)}{\Pc_{\tau}(t) - \Pc(t)}
  dt \Big| \\
  & \leq \left( \int_{0}^{T} \|\Rc_{\theta}(t) (\A_{\tau}(t) -
  \A(t))\|_{\V^*}^2dt  \int_{0}^{T} \|
  \Pc_{\tau}(t)\|_{\V}^2dt\right)^{\frac{1}{2}}\\
  &\quad+ \Big| \int_{0}^{T} \dual{\Rc_{\theta}(t)\A(t)}{\Pc_{\tau}(t) - \Pc(t)}
  dt \Big|.
  \end{align*}
  As before, the weak convergence of $(\Pc_{\tau} )$ towards $\Pc$ in $L^2(0,T;\V)$ and
  Lemma~\ref{lemma4:KonvergenzA} imply that
\[
  \int_{0}^{T}
  \dual{\Rc_{\theta}(t)\A_{\tau}(t)}{\Pc_{\tau}(t)} dt \to
  \int_{0}^{T} \dual{\Rc_{\theta}(t)\A(t)}{\Pc(t)} dt
\]
  as $\tau \to 0$.

  The third summand on the right-hand side of \eqref{eq4:Minty1} converges to $\int_{0}^{T}
  \dual{\Rc_{\theta}^2(t)}{\Pc(t)} dt$ as $\tau \to 0$, since $(\Pc_{\tau} )$
  converges weakly to $\Pc$ in $L^2(0,T;\V)$.
  }

For the fourth term on the right-hand side of \eqref{eq4:Minty1}, one similarly shows convergence to $- \int_0^T \langle \Ac(t)\Rc_\theta(t),
\Rc_\theta(t)\rangle dt$ as $\tau \to 0$.

	Therefore, we obtain that
	\begin{align*}
	\lim\limits_{\tau \to 0}\int_{0}^{T}
	\dual{\Ac_{\tau}(t)\Rc_{\theta}(t)}{\Pc_{\tau}(t) - \Rc_{\theta}(t)} dt
	= \int_{0}^{T} \dual{\Ac(t)\Rc_{\theta}(t)}{\Pc(t) - \Rc_{\theta}(t)} dt.
	\end{align*}
	Altogether, this yields the estimate
	\begin{align}\label{eq4:Minty2}
	&\liminf_{\tau \to 0} \int_{0}^{T}
	\dual{\Ac_{\tau}(t)\Pc_{\tau}(t)}{\Pc_{\tau}(t)} dt\\
	&\geq \int_{0}^{T} \dual{\Ac(t)\Rc_{\theta}(t)}{\Pc(t) - \Rc_{\theta}(t)} dt
	+ \int_{0}^{T} \dual{\Sc(t)}{\Rc_{\theta}(t)} dt. \nonumber
	\end{align}
	Using \eqref{eq4:RiccDiscret}, we obtain that
	\begin{align*}
	\int_{0}^{T} \dual{\Ac_{\tau}(t)\Pc_{\tau}(t)}{\Pc_{\tau}(t)} dt =
	\int_{0}^{T} \big( \dual{\Qc_{\tau}(t)}{\Pc_{\tau}(t)} -
	\big\langle \Pctil'_{\tau}(t) , \Pc_{\tau}(t) \big\rangle \big) dt.
	\end{align*}
	As $\Qc_{\tau}$ converges strongly  to $\Qc$ in $L^2(0,T;\Hi)$ and thus in $L^2(0,T;\V^*)$, we have that
	\begin{align*}
	\int_{0}^{T} \dual{\Qc_{\tau}(t)}{\Pc_{\tau}(t)} dt \to \int_{0}^{T}
	\dual{\Qc(t)}{\Pc(t)} dt
	\end{align*}
	holds as $\tau \to 0$. Furthermore,
	\begin{align*}
	\int_{0}^{T}  \dual{\Pctil'_{\tau}(t)}{\Pc_{\tau}(t)} dt
  &= \sum_{n=1}^{N} \frac{1}{\tau} \int_{t_{n-1}}^{t_n} dt \dual{P_n -
  P_{n-1}}{P_n}\\
	&\geq \frac{1}{2} \sum_{n=1}^{N} \big( \|P_n\|_{\Hi}^2 - \|P_{n-1}
	\|_{\Hi}^2\big)
  = \frac{1}{2} \|P_N\|_{\Hi}^2 - \frac{1}{2} \|P_0\|_{\Hi}^2,
	\end{align*}
	{together with the fact that $P_0 = \Pc(0)$ as well as $P_N \weak
	\Pc(T)$ in $\Hi$ as $\tau \to 0$ and the weak lower
	semi-continuity of the norm}, yields that
	\begin{align*}
	\liminf_{\tau \to 0} \int_{0}^{T}  \dual{\Pctil'_{\tau}(t)}{\Pc_{\tau}(t)} dt
	&\geq \frac{1}{2} \|\Pc(T)\|_{\Hi}^2 - \frac{1}{2} \|\Pc(0)\|_{\Hi}^2
  = \int_{0}^{T} \dual{\Pc'(t)}{\Pc(t)} dt.
  \end{align*}
  For the last step, we use that $\Pc \in \W$,
  compare, e.g., \cite[Section 20]{L.Tartar.2006}.
	Altogether, this yields that
	\begin{align*}
	\limsup_{\tau \to 0} \int_{0}^{T}
	\dual{\Ac_{\tau}(t)\Pc_{\tau}(t)}{\Pc_{\tau}(t)} dt
	\leq \int_{0}^{T} \dual{\Qc(t)}{\Pc(t)} dt -
	\int_{0}^{T}\dual{\Pc'(t)}{\Pc(t)}dt.
	\end{align*}
	Using this estimate and \eqref{eq4:Minty2}, we obtain that
	\begin{align*}
	\int_{0}^{T}\dual{\Sc(t)}{\Pc(t)} dt &= \int_{0}^{T} \dual{\Qc(t) -
		\Pc'(t)}{\Pc(t)} dt\\
	& \geq \int_{0}^{T} \big( \dual{\Ac(t) \Rc_{\theta}(t)}{\Pc(t) - \Rc_{\theta}(t)}
	+ \dual{\Sc(t)}{\Rc_{\theta}(t)} \big)dt
	\end{align*}
	which can be rewritten as
	\begin{equation} \label{eq4:Minty3}
	\int_{0}^{T} \dual{\Sc(t)}{\Pc(t) - \Rc_{\theta}(t)} dt \geq \int_{0}^{T}
	\dual{\Ac(t) \Rc_{\theta}(t)}{\Pc(t) - \Rc_{\theta}(t)}dt.
	\end{equation}
	Reinserting $\Rc_{\theta} = \Pc - \theta \Uc$ for $\theta >0$ small enough,
	the estimate has the form
	\begin{align*}
	\theta \int_{0}^{T} \dual{\Sc(t)}{\Uc(t)} dt \geq \theta \int_{0}^{T}
	\dual{\Ac(t) (\Pc(t) - \theta \Uc(t))}{\Uc(t)}dt.
	\end{align*}
	Dividing this by $\theta$ and followed by the limiting process $\theta
	\to 0$, this yields
	\begin{align*}
	\int_{0}^{T} \dual{\Sc(t)}{\Uc(t)} dt \geq
	\int_{0}^{T} \dual{ \Ac(t)\Pc(t)}{\Uc(t)} dt
	\end{align*}
for all $\Uc$ and thus $\Sc = \Ac \Pc$.
\end{proof}

The last lemma in mind, the convergence of the time discretization for a
right-hand side $\Qc \in L^1(0,T;\Hi) + L^2(0,T;\V^*)$ can now be deduced.

\begin{proof}[Proof of Theorem~\ref{satz4:variationelLoesung}]
	For a right-hand side $\Qc = \Qc_1 + \Qc_2 \in L^1(0,T;\Hi) + L^2(0,T;\V^*)$,
	there exist sequences $\left(\Qc_{1,i}\right)_{i\in \N}$
	and $\left(\Qc_{2,i} \right)_{i\in \N}$ in $L^2(0,T;\Hi)$ such that
	\begin{align*}
	\Qc_{1,i} \to \Qc_1 \text{ in } L^1(0,T;\Hi) \quad \text{ and } \quad
	\Qc_{2,i} \to \Qc_2 \text{ in } L^2(0,T;\V^*)
	\end{align*}
	as $i \to \infty$. Furthermore, we set $\Qc_i = \Qc_{1,i} +
	\Qc_{2,i}$ for $i \in \N$.
	Every problem
	\begin{equation}\label{eq4:ricEqApprox}
	\begin{split}
	\Pc_i'(t) + \A^*(t)\Pc_i(t)+ \Pc_i(t)\A(t) + \Pc_i^2(t) &= \Qc_i(t), \quad
	t\in (0,T),\\
	\Pc_i(0) &= P_0
	\end{split}
	\end{equation}
	has a solution  $\Pc_i \in \W$ \red{with $\Pc_i(t) \geq - \gamma$ for every
	$t \in [0,T]$}.

	For arbitrary $i,j \in \N$, we consider the difference of the solutions
	$\Pc_i$
	and $\Pc_j$ of the associated problems \eqref{eq4:ricEqApprox}. Using Lemma~\ref{lemma3:QuadTermNeg},  it follows that
	\begin{align}
	& \frac{1}{2} \frac{d}{dt} \|\Pc_i(t) - \Pc_j(t) \|_{\Hi}^2  + \mu\|\Pc_i(t)
	- 	\Pc_j(t)\|^2_{\V}
	- \gamma C_{V,H}^2 \|\Pc_i(t) - \Pc_j(t) \|_{\V}^2 \nonumber\\
	&\leq \ska{\Pc_i'(t) - \Pc_j'(t)}{\Pc_i(t) - \Pc_j(t)} \nonumber\\
	&\qquad + \dual{\A^*(t)(\Pc_i(t) - \Pc_j(t))+ (\Pc_i(t) - \Pc_j(t))\A(t)
	}{\Pc_i(t) - \Pc_j(t)} \nonumber\\
	&\qquad + \ska{\Pc_i^2(t) - \Pc_j^2(t)}{\Pc_i(t) - \Pc_j(t)} \nonumber\\
	&= \ska{\Qc_{1,i} (t) - \Qc_{1,j	}(t)}{\Pc_i(t) - \Pc_j(t)} +
	\ska{\Qc_{2,i}
		(t) - \Qc_{2,j}(t)}{\Pc_i(t) - \Pc_j(t)}
\label{hh2}
	\end{align}
for almost every $t\in (0,T)$.
	Integrating this estimate and  applying  Young's inequality yields
	\begin{equation} \label{eq4:integratedApriori}
	\begin{split}
	& \frac{1}{2}\|\Pc_i(t) - \Pc_j(t) \|_{\Hi}^2 + \left(\mu-
	\gamma C_{V,H}^2 \right) \int_{0}^{t}
	\|\Pc_i(s) - \Pc_j(s) \|_{\V}^2 ds\\
	&\leq  \max_{t\in [0,T]} \|\Pc_i(t) - \Pc_j(t)\|_{\Hi} \int_{0}^{t}
	\|\Qc_{1,i} (s) - \Qc_{1,j}(s)\|_{\Hi}ds\\
	&\quad + \int_{0}^{t} \Big( \frac{1}{2\big(\mu-
		\gamma C_{V,H}^2\big)} \|\Qc_{2,i}(s) -
	\Qc_{2,j}(s)\|_{\V^*}^2 + \frac{\mu-
		\gamma C_{V,H}^2}{2} \|\Pc_i(s) - \Pc_j(s)\|_{\V}^2\Big) ds.
	\end{split}
	\end{equation}
	As $\Pc_i, \Pc_j \in C([0,T];\Hi)$, there exists $t_0\in[0,T]$ such that
	\begin{align*}
	\|\Pc_i(t_0) - \Pc_j(t_0) \|_{\Hi} = \max_{t\in[0,T]} \|\Pc_i(t) - \Pc_j(t)
	\|_{\Hi}
	\end{align*}
	holds. As $t \in [0,T]$ in estimate \eqref{eq4:integratedApriori} is
	arbitrary, we can take $t = t_0$ and obtain that
	\begin{align} \label{eq4:contbound}
  \begin{split}
    \|\Pc_i(t_0) - \Pc_j(t_0) \|_{\Hi}^2
    &\leq  2 \|\Pc_i(t_0) - \Pc_j(t_0)\|_{\Hi} \int_{0}^{T} \|\Qc_{1,i} (s) -
    \Qc_{1,j}(s)\|_{\Hi}ds\\
    &\qquad  + \frac{1}{\mu-
      \gamma C_{V,H}^2 } \int_{0}^{T}\|\Qc_{2,i}(s) -
    \Qc_{2,j}(s)\|_{\V^*}^2 ds.
  \end{split}
	\end{align}
	Setting
	\begin{align*}
	x &= \|\Pc_i(t_0) - \Pc_j(t_0) \|_{\Hi},\\
	a&= \int_{0}^{T} \|\Qc_{1,i} (s) - \Qc_{1,j}(s)\|_{\Hi}ds, \quad \text{ and	}\\
	b &= \Big( \frac{1}{\mu- \gamma C_{V,H}^2} \int_{0}^{T}\|\Qc_{2,i}(s)
	- 	\Qc_{2,j}(s)\|_{\V^*}^2 ds \Big)^{\frac{1}{2}}
	\end{align*}
	for abbreviation, \eqref{eq4:contbound} is equivalent to
	\begin{align*}
	x^2 \leq 2ax + b^2.
	\end{align*}
	This estimate implies that
	\begin{align*}
	(x-a)^2 = x^2 -2ax +a^2 \leq a^2 +b^2.
	\end{align*}
Taking the square root on both sides, this yields
	\begin{align*}
	x-a \leq \sqrt{ a^2 +b^2} \leq a + b.
	\end{align*}
	Altogether this leads to the estimate
	\begin{align*}
	x \leq 	2a + b,
	\end{align*}
	which implies that
	\begin{align*}
	&\max_{t\in[0,T]} \|\Pc_i(t) - \Pc_j(t) \|_{\Hi}\\
	&\leq 2\int_{0}^{T} \|\Qc_{1,i} (s) - \Qc_{1,j}(s)\|_{\Hi}ds +
	\left(\frac{1}{\mu- \gamma C_{V,H} }\int_{0}^{T}\|\Qc_{2,i}(s)
		- \Qc_{2,j}(s)\|_{\V^*}^2 ds\right)^{\frac{1}{2}} =: C_{max}.
	\end{align*}
Using this estimate in
	\eqref{eq4:integratedApriori}, we further obtain that for all $t\in [0,T]$
	\begin{align*}
	& \|\Pc_i(t) - \Pc_j(t) \|_{\Hi}^2 + \left( \mu-
	\gamma C_{V,H}^2  \right) \int_{0}^{t} \|\Pc_i(s)
	- \Pc_j(s) \|_{\V}^2 ds\\
	&\leq  2  C_{max} \int_{0}^{T}\|\Qc_{1,i} (s) - \Qc_{1,j}(s)\|_{\Hi}ds\\
	&\quad  + \frac{1}{\mu-
		\gamma C_{V,H} } \int_{0}^{T}  \|\Qc_{2,i}(s) -
	\Qc_{2,j}(s)\|_{\V^*}^2 ds.
	\end{align*}
	This proves that $\seqd{\Pc}{i}$ is a Cauchy sequence in $C([0,T];\Hi) \cap
	L^2(0,T;\V)$ and therefore convergent to a certain limit $\Pc \in C([0,T];\Hi) \cap
	L^2(0,T;\V)$. This convergence and the estimate
	\begin{align*}
	&\quad \ \int_{0}^{T} \| \Ac(t) \Pc_i(t) - \Ac(t) \Pc(t)\|^2_{\V^*} dt\\
	&\leq c \int_{0}^{T} \big( \eta \left\| \Pc_i(t) - \Pc(t) \right\|^2_{\V}
	+  C_{V,H}^4 \|\Pc_i(t) -  \Pc(t) \|^2_{\V}
	\left(\| \Pc_i(t) \|^2_{\Hi} + \| \Pc(t) \|^2_{\Hi}\right) \big) dt,
	\end{align*}
  where $\eta$ is defined in Assumption~\ref{as:OpAB}.,
	imply the convergence of the sequence $ (\Ac \Pc_i)_{i\in \N}$ to $ \Ac \Pc$
	in $L^2(0,T;\V^*)$. As the sequence $\seqd{\Qc}{i}$ of right-hand sides
	converges strongly in  $L^1(0,T;\Hi) + L^2(0,T;\V^*)$ towards $\Qc$, it follows that
	\begin{align*}
	\Pc_i' = \Qc_i - \Ac \Pc_i
 \to \Qc - \Ac \Pc \text{ in } L^1(0,T;\Hi) + L^2(0,T;\V^*) \, .
	\end{align*}
This shows that $\Pc$ possesses a weak derivative that coincides with $\Qc
- \Ac \Pc$. Moreover, $\Pc \in \W \hookrightarrow C([0,T];\Hi)$ \red{with
$\Pc(t) \geq -\gamma$ for every $t \in [0,T]$}. Considering the continuity of
the corresponding trace operator mapping onto the evaluation at $t=0$
immediately shows that $\Pc(0)$ is the limit of $\Pc_i(0) = P_0$ in $\Hi$.

\red{
Finally, let us prove the uniqueness of the solution within the class of
functions in $\W$ which fulfill $\Pc(t) \geq -\gamma$ for every $t\in [0,T]$.
In order to do so let $\Pc_1$ and $\Pc_2$ be such solutions. Then we
consider the difference of the associated problems \eqref{eq0:InitialRic} to
the solutions $\Pc_1$ and $\Pc_2$ for $t \in [0,T]$
\begin{align*}
  \Pc_1'(t) - \Pc_2'(t) + \A^*(t)(\Pc_1(t) - \Pc_2(t)) + (\Pc_1(t) - \Pc_2(t))\A(t)
  + \Pc_1^2(t) - \Pc_2^2(t) = 0.
\end{align*}
Testing this equation with $\Pc_1(t) - \Pc_2(t)$ and using both the strong
monotonicity as well as Lemma~\ref{lemma3:QuadTermNeg}, we find that
for $t \in [0,T]$,
\begin{align*}
  \frac{1}{2} \frac{d}{dt} \|\Pc_1(t) - \Pc_2(t)\|_{\Hi}^2
  + \mu \| \Pc_1(t) - \Pc_2(t) \|_{\V}^2
  - 2 \gamma \| \Pc_1(t) - \Pc_2(t) \|_{\Hi}^2 \leq 0.
\end{align*}
Solving this differential inequality shows that
\begin{align*}
  \| \Pc_1(t) - \Pc_2(t) \|_{\Hi}^2
  \leq \mathrm{e}^{4\gamma t} \| \Pc_1(0) - \Pc_2(0) \|_{\Hi}^2
\end{align*}
and thus implies the uniqueness.}
\end{proof}

So far we have proven that the sequence of interpolations
\eqref{eq4:defInterpol} has a weakly
convergent subsequence. This convergence result can be strengthened.

\begin{satz}\label{satz4:strongConv}
	Let the assumptions of Theorem~\ref{satz4:variationelLoesung} be satisfied.
	Further, let $(N_k)_{k\in\N}$ be a sequence of positive integers such that
	$N_k \to \infty$ as $k \to \infty$ and let $(\tau_k)_{k\in\N}$ be the
	sequence of step sizes $\tau_k = \frac{T}{N_k}$ such that
	$\sup_{k\in\N}\tau_k < \frac{\mu}{2C_{V,H}^2}$. \red{Then the sequence
	$\left(	\Pc_{\tau_{k}} \right)_{k\in \N}$ of piecewise constant approximate
	solutions converges	strongly in $L^2(0,T;\V)$ towards the solution $\Pc
	\in \W$ that fulfills $\Pc(t) \geq -\gamma$ for every $t \in [0,T]$ of the
	initial value 	problem \eqref{eq0:InitialRic} as $k \to \infty$.}
\end{satz}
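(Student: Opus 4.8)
The plan is to upgrade the weak convergences obtained in Lemma~\ref{lemma4:konvTF} to strong convergence in $L^{2}(0,T;\V)$ by a monotonicity (Minty-type) estimate: the linear part of $\Ac_{\tau}$ is uniformly strongly positive, while the quadratic term is controlled from below because both $\Pc_{\tau}(t)$ and the limit $\Pc(t)$ lie in $C_{-\gamma}$, so that Lemma~\ref{lemma3:QuadTermNeg} applies. First I would note that, since $\Pc$ is the unique solution of \eqref{eq0:InitialRic} in the admissible class (Theorem~\ref{satz4:variationelLoesung}), a routine subsequence argument makes the convergences of Lemma~\ref{lemma4:konvTF} hold for the whole sequence: $\Pc_{\tau_{k}}\weak\Pc$ in $L^{2}(0,T;\V)$, $\Pc_{\tau_{k}},\Pctil_{\tau_{k}}\weaks\Pc$ in $L^{\infty}(0,T;\Hi)$, $\Pctil'_{\tau_{k}}\weak\Pc'$ in $L^{2}(0,T;\V^{*})$, and $\Ac_{\tau_{k}}\Pc_{\tau_{k}}\weak\Qc-\Pc'=\Ac\Pc$ in $L^{2}(0,T;\V^{*})$; moreover $P_{N_{k}}\weak\Pc(T)$ in $\Hi$, which was already shown in the proof of Lemma~\ref{lemma4:variationelLoesungL2}.

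Next, using that the linear part of $\Ac_{\tau_{k}}(t)$ is uniformly strongly positive (Assumption~\ref{as:OpAB} and Lemma~\ref{lemma3:AstarkPos}) and applying Lemma~\ref{lemma3:QuadTermNeg} to $\Pc_{\tau_{k}}(t),\Pc(t)\in C_{-\gamma}$ together with the embedding $\|\cdot\|_{\Hi}\le\frac{C_{V,H}}{\sqrt 2}\|\cdot\|_{\V}$, one obtains for almost every $t$ the pointwise bound
\begin{align*}
\dual{\Ac_{\tau_{k}}(t)\Pc_{\tau_{k}}(t)-\Ac_{\tau_{k}}(t)\Pc(t)}{\Pc_{\tau_{k}}(t)-\Pc(t)}\ge\big(\mu-\gamma C_{V,H}^{2}\big)\,\|\Pc_{\tau_{k}}(t)-\Pc(t)\|_{\V}^{2},
\end{align*}
with $\mu-\gamma C_{V,H}^{2}>0$ by the hypothesis $\gamma<\mu/C_{V,H}^{2}$. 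Integrating over $(0,T)$, it then suffices to show that the left-hand side tends to $0$. Expanding the pairing into four integrals, the three integrals $\int_{0}^{T}\dual{\Ac_{\tau_{k}}\Pc_{\tau_{k}}}{\Pc}\,dt$, $\int_{0}^{T}\dual{\Ac_{\tau_{k}}\Pc}{\Pc_{\tau_{k}}}\,dt$ and $\int_{0}^{T}\dual{\Ac_{\tau_{k}}\Pc}{\Pc}\,dt$ all converge to $\int_{0}^{T}\dual{\Ac\Pc}{\Pc}\,dt$: the first via $\Ac_{\tau_{k}}\Pc_{\tau_{k}}\weak\Ac\Pc$ in $L^{2}(0,T;\V^{*})$, the other two via the strong convergence $\Ac_{\tau_{k}}\Pc\to\Ac\Pc$ in $L^{2}(0,T;\V^{*})$ (the linear parts by Lemma~\ref{lemma4:KonvergenzA}, extended from constant operators to $\Pc\in L^{2}(0,T;\V)$ by the density argument already used in the proof of Lemma~\ref{lemma4:variationelLoesungL2}; the quadratic part $\Pc^{2}$ is independent of $k$ and lies in $L^{\infty}(0,T;\V^{*})$) paired with the bounded, weakly convergent sequence $\Pc_{\tau_{k}}$.

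The one delicate term is $\int_{0}^{T}\dual{\Ac_{\tau_{k}}\Pc_{\tau_{k}}}{\Pc_{\tau_{k}}}\,dt$, which I would rewrite via the discrete equation \eqref{eq4:RiccDiscret} as $\int_{0}^{T}\dual{\Qc_{\tau_{k}}}{\Pc_{\tau_{k}}}\,dt-\int_{0}^{T}\dual{\Pctil'_{\tau_{k}}}{\Pc_{\tau_{k}}}\,dt$. The first summand converges to $\int_{0}^{T}\dual{\Qc}{\Pc}\,dt$, since $\Qc_{\tau_{k}}\to\Qc$ strongly in $L^{1}(0,T;\Hi)+L^{2}(0,T;\V^{*})$ while $\Pc_{\tau_{k}}$ is bounded in $L^{\infty}(0,T;\Hi)\cap L^{2}(0,T;\V)$ and converges weakly/weak$\ast$ there. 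For the second summand, discrete integration by parts gives
\begin{align*}
\int_{0}^{T}\dual{\Pctil'_{\tau_{k}}}{\Pc_{\tau_{k}}}\,dt=\tfrac{1}{2}\|P_{N_{k}}\|_{\Hi}^{2}-\tfrac{1}{2}\|P_{0}\|_{\Hi}^{2}+\tfrac{1}{2}\sum_{n=1}^{N_{k}}\|P_{n}-P_{n-1}\|_{\Hi}^{2}\ge\tfrac{1}{2}\|P_{N_{k}}\|_{\Hi}^{2}-\tfrac{1}{2}\|P_{0}\|_{\Hi}^{2},
\end{align*}
so that, using $P_{N_{k}}\weak\Pc(T)$ in $\Hi$, weak lower semicontinuity of $\|\cdot\|_{\Hi}$, $P_{0}=\Pc(0)$, and the identity $\int_{0}^{T}\dual{\Pc'}{\Pc}\,dt=\tfrac{1}{2}\|\Pc(T)\|_{\Hi}^{2}-\tfrac{1}{2}\|\Pc(0)\|_{\Hi}^{2}$ valid for $\Pc\in\W$, one obtains $\liminf_{k}\int_{0}^{T}\dual{\Pctil'_{\tau_{k}}}{\Pc_{\tau_{k}}}\,dt\ge\int_{0}^{T}\dual{\Pc'}{\Pc}\,dt$, and hence $\limsup_{k}\int_{0}^{T}\dual{\Ac_{\tau_{k}}\Pc_{\tau_{k}}}{\Pc_{\tau_{k}}}\,dt\le\int_{0}^{T}\dual{\Qc-\Pc'}{\Pc}\,dt=\int_{0}^{T}\dual{\Ac\Pc}{\Pc}\,dt$. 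Combining the four limits yields $\limsup_{k}\int_{0}^{T}\dual{\Ac_{\tau_{k}}\Pc_{\tau_{k}}-\Ac_{\tau_{k}}\Pc}{\Pc_{\tau_{k}}-\Pc}\,dt\le0$; being also bounded below by $(\mu-\gamma C_{V,H}^{2})\|\Pc_{\tau_{k}}-\Pc\|_{L^{2}(0,T;\V)}^{2}\ge0$, this quantity tends to $0$, whence $\Pc_{\tau_{k}}\to\Pc$ in $L^{2}(0,T;\V)$. The main obstacle is precisely this last term: at the discrete level only the one-sided energy inequality above is available, so the strong convergence hinges on matching the discrete integration by parts with the weak convergence $P_{N_{k}}\weak\Pc(T)$ and lower semicontinuity of the norm; everything else is a routine strong-times-weak passage to the limit. (For right-hand sides outside $L^{2}(0,T;\Hi)$, for which the semi-discrete scheme requires an additional approximation step, one argues analogously, combining the above with the $L^{2}(0,T;\Hi)$-approximation of $\Qc$ used in the proof of Theorem~\ref{satz4:variationelLoesung}.)
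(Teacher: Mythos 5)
Your proof is correct and follows essentially the same route as the paper: establish the pointwise lower bound $(\mu-\gamma C_{V,H}^2)\|\Pc_{\tau}-\Pc\|_\V^2 \le \dual{\Ac_\tau\Pc_\tau-\Ac_\tau\Pc}{\Pc_\tau-\Pc}$ via strong positivity of the linear part and Lemma~\ref{lemma3:QuadTermNeg}, then show the right-hand side tends to zero by combining weak convergence of $\Ac_\tau\Pc_\tau$, strong convergence of $\Ac_\tau\Pc$, and the $\limsup$ control of $\int\dual{\Ac_\tau\Pc_\tau}{\Pc_\tau}\,dt$ via the discrete equation, discrete integration by parts, $P_N\weak\Pc(T)$, and weak lower semicontinuity of the norm. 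The paper simply cites this last point as a consequence of the Minty-trick argument in the proof of Lemma~\ref{lemma4:variationelLoesungL2} rather than re-deriving it, and invokes the subsequence principle at the end rather than the beginning; these are only organizational differences.
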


\begin{proof}
\red{Using the weakly convergent subsequence from the proof of
Theorem~\ref{satz4:variationelLoesung}, we can write (omitting the index
$k$),}
	\begin{align*}
	& \left(\mu - \gamma C_{V,H}^2 \right) \int_{0}^{T} \|
	\Pc_{\tau}(t) - \Pc(t)\|_{\V}^2 dt \notag \\
	&\leq \int_{0}^{T} \dual{\A^*_{\tau}(t) (\Pc_{\tau}(t) -  \Pc(t)) +
		(\Pc_{\tau}(t) -  \Pc(t))\A_{\tau}(t) }{\Pc_{\tau}(t) - \Pc(t)} dt \notag \\
	&\quad + \int_{0}^{T} \dual{ \Pc_{\tau}^2(t)- \Pc^2(t)}{\Pc_{\tau}(t) - \Pc(t)}
	dt \notag\\
	&= \int_{0}^{T} \dual{\Ac_{\tau}(t) \Pc_{\tau}(t)}{\Pc_{\tau}(t) - \Pc(t)} -
	\dual{\Ac_{\tau}(t) \Pc(t)}{\Pc_{\tau}(t) - \Pc(t)} dt. \label{eq4:strongConv1}
	\end{align*}
In the proof of Lemma~\ref{lemma4:variationelLoesungL2}, it is shown with Minty's trick that
$$
\int_{0}^{T} \dual{\Ac_{\tau}(t) \Pc_{\tau}(t)}{\Pc_{\tau}(t)}
 dt \to \int_0^T \dual{\Ac(t) \Pc(t)}{\Pc(t)} dt .
$$
This together with the weak convergence of $\Pc_\tau$ towards $\Pc$ in
$L^2(0,T;\V)$ and the convergence of $\Ac_{\tau} \Pc$ towards $\Ac \Pc$
in  $L^2(0,T;\V^*)$ implies the assertion.
\red{As the solution is unique within the class of functions in $\W$ that
fulfill $\Pc(t) \geq -\gamma$ for every $t \in [0,T]$, one can argue that the
entire sequence converges using the subsequence principle.}
\end{proof}

\section*{Acknowledgments}
The authors would like to thank Christian Kreusler (Berlin) for helpful discussions and suggestions as well as careful reading of the manuscript.

\end{document}